  \def\eref#1{(\ref{#1})}
\def\prt{\partial}
\def\ol{\overline}
\def\var{{\mathop {\rm Var}}}
\def\CC{{\cal C}}
\def\EE{{\cal E}}
\def\FF{{\cal F}}
\def\BB{{\cal B}}
\def\DD{{\cal D}}
\def\LL{{\cal L}}
\def\GG{{\cal G}}
\def\CP{{\cal P}}
\def\CX{{\cal X}}
\def\E{{\bf E}}
\def\F{{\bf F}}
\def\P{{\bf P}}
\def\bQ{{\bf Q}}
\def\RR{\mathbb{R}}
\def\n{{\bf n}}
\def\v{{\bf v}}
\def\z{{\bf z}}
\def\cn{{\bf u}}
\def\1{{\bf 1}}
\def\<{\langle}
\def\>{\rangle}
\def\<{\langle}
\def\>{\rangle}
\def\pf{\noindent{\bf Proof.} }
\def\eps{{\varepsilon}}
\def\wt{\widetilde}
\def\wh{\widehat}
\def\qed{{\hfill $\Box$ \bigskip}}
\def\bfb{{\bf b}}
\numberwithin{equation}{section}
\newtheorem{thm}{Theorem}[section]
\newtheorem{remark}[thm]{Remark}
\newtheorem{corollary}[thm]{Corollary}
\newtheorem{theorem}[thm]{Theorem}
\newtheorem{proposition}[thm]{Proposition}
\newtheorem{assumption}[thm]{Assumption}
\def\sqr#1#2{%
    \fboxsep0pt%
    \fboxrule#2pt%
    \fbox{\hbox to #1pt{\hss\vbox to #1pt{\vss}}}}
\def\qed{\unskip\nobreak\hfil\penalty50\hskip0.3em\hbox{}\nobreak
      \hfil\sqr{5.5}{0.2}\parfillskip=\z@ \finalhyphendemerits=0 \par}
\newenvironment{proof}[1][ ]{\ifthenelse{\equal{#1}{ }}{\def\MHpfName{.}}{\def\MHpfName{ #1.}}%
    \trivlist\item[\hskip\labelsep{\bf Proof\MHpfName}]\ignorespaces}%
    {\phantom{a}\nobreak\qed\endtrivlist}
\def\qed{{\hfill $\square$ \bigskip}}
\def\square{{\vcenter{\vbox{\hrule height.3pt
        \hbox{\vrule width.3pt height5pt \kern5pt
           \vrule width.3pt}
        \hrule height.3pt}}}}
\def\bD {{\mathbb D}}
 \def\bQ {{\mathbb Q}}
\def\wt{\widetilde}
\def\wh{\widehat}
\def\ol{\overline}
\def\E{{\mathbb E}}
\def\P{{\mathbb P}}
\def\angel#1{{\langle #1 \rangle}}
\def\bee{\begin{equation}}
\def\eee{\end{equation}}
\def\R{{\mathbb R}}
\def\E{{{\mathbb E}\,}}
\def\P{{\mathbb P}}
\def\Q{{\mathbb Q}}
\def\F{{\cal F}}
\def\qed{{\hfill $\square$ \bigskip}}
\def\eps{\varepsilon}
\def\angel#1{{\langle#1\rangle}}
\def\wt{\widetilde}
\def\ol{\overline}
\def\wh{\widehat}
\def\square{{\vcenter{\vbox{\hrule height.3pt
        \hbox{\vrule width.3pt height5pt \kern5pt
           \vrule width.3pt}
        \hrule height.3pt}}}}
\def\tfrac#1#2{{\textstyle {\frac{#1}{#2}}}}
\def\tlint{{- \kern-0.85em \int \kern-0.2em}}  % for textstyle
\def\dlint{{- \kern-1.05em \int \kern-0.4em}}  % for displays
\def\bD {{\mathbb D}}
 \def\bQ {{\mathbb Q}}
\begin{document}

\title{\bf Stationary
distributions for diffusions with inert drift
 \footnote{
Research supported in part by NSF Grants DMS-0601783 and
DMS-0600206 and by EPSRC Grant EP/D071593/1.
The second and fourth authors gratefully acknowledge
the hospitality and support of the Institut Mittag-Leffler, where part
of this research was done.
}}
\author{ {\bf Richard F. Bass}, \
{\bf Krzysztof Burdzy},  \ {\bf Zhen-Qing Chen} \ and \ {\bf
Martin Hairer}}

%\date{(March 21, 2008)}
\maketitle

\begin{abstract}
Consider a reflecting diffusion in a domain in $\RR^d$ that
acquires drift in proportion to the amount of local time spent on
the boundary of the domain. We show that the stationary
distribution for the joint law  of  the position of the reflecting
process and the value of the drift vector has a product form.
Moreover, the first component is the symmetrizing measure on the
domain for the reflecting diffusion without inert drift, and the
second component has a Gaussian distribution. We also consider
processes where the drift is given in terms of the gradient of a
potential.

\end{abstract}

\begin{doublespace}

\section{Introduction}

This article is concerned with the higher dimensional version of a
one-dimensional model originally introduced by Knight \cite{K} and
studied in more detail in \cite{W1,W2}. Computer simulations
presented in \cite{BHP} led to the conjecture that the stationary
distribution for this higher dimensional process has a certain
interesting structure. We prove this conjecture, and moreover answer
questions about the stationary distribution left open in \cite{BHP}.

We start with a presentation of the model in a simple case, that of
\cite{BHP}. We consider a bounded smooth domain $D\subset \R^d$ and
reflecting Brownian motion $X_t$ in $\ol D$ with  drift $K_t$. Let
$B_t$ be $d$-dimensional Brownian motion, $\n(x)$ be the unit inward
normal vector of $D$ at $x\in \prt D$ and let $L_t$ be the local time
of $X$ on $\prt D$, that is, a nondecreasing one-dimensional process
with continuous paths that increases only when $X_t\in \prt D$. The
pair of processes $(X,K)$ has the following representation:
\begin{align*}
X_t &= X_0 + B_t + \int_0^t \n(X_s) \,dL_s
  + \int_0^t K_s \,ds, \\
K_t &= K_0 + \int_0^t \n(X_s)d L_s
\end{align*}
with $(X_t, K_t)\in \ol D\times \R^d$ for all $t\geq 0$. Note that
$X_t$ is reflecting Brownian motion in $\ol D$ with normal reflection
at the boundary and with drift $K_t$, $K_t$ is an $\R^d$-valued
process that represents the accumulated local time on the boundary in
the direction normal to the boundary, and the drift $K_t$ does not
change when $X_t$ is in the interior of $D$. We call the process $K$
``inert drift'' because it plays a role analogous to the inert
particle in Knight's original model \cite{K} in one dimension. The
main simulation result of \cite{BHP} suggests that a stationary
distribution for $(X,K)$ exists and has a product form, i.e., that
$X_t$ and $K_t$ are independent for each time $t$ under the
stationary distribution. Moreover, the first component of the
stationary distribution is the uniform probability measure on $D$. We
prove rigorously in this paper that this indeed holds, and we further
show that the second component of the stationary distribution is
Gaussian.

The product form of the stationary distribution was initially a
mystery to us, especially since the components $X$ and $K$ of the
vector $(X,K)$ are {\it not} Markov processes. There are models known
in mathematical physics where the stationary distribution of a Markov
process has a product form although each component of the Markov
process is not a Markov process itself. Examples may be found in
Chapter VIII of \cite{L}, in particular, Theorem 2.1 on page 380. As
we will see at the beginning of Section \ref{sect:potential}, the
product form of the stationary distribution in our model comes
naturally from a computation with infinitesimal generators.

The Gaussian nature of the stationary distribution for $K$ is already
known in the one-dimension\-al case; see \cite{W1,BW}. An interesting
phenomenon observed in this paper is that if the inward normal vector
field ${\bf n}$ in the equation for $K$ is replaced by $\Gamma \n$
for some constant symmetric positive definite matrix $\Gamma$, $(X,
K)$ continues to have a product form for the stationary distribution,
but this time the component $K$ has a Gaussian distribution with
covariance matrix $\Gamma$.

The main goal of this paper is to address the existence and
uniqueness of the stationary distribution of normally reflecting
Brownian motion with inert drift and to give an explicit formula
for the stationary distribution. We also consider a larger class
of reflecting diffusions, including what is sometimes known as
distorted reflecting Brownian motion---see Theorems \ref{T:wc.3}
and \ref{T:irr.1} below. Distorted reflected Brownian motion  is
the reflecting diffusion with generator $\frac1{2\rho}  \,  \nabla
(\rho \nabla)$ for a suitable function $\rho$.

We start by showing in Section \ref{sec:diff} the weak existence and
weak uniqueness of solutions to an SDE representing a large family of
diffusions with reflection and inert drift. More specifically, let
$\rho$ be a $C^2$ function on $\overline D$ that is bounded between
two positive constants and $A(x)=(a_{ij}(x))_{n\times n}$ be a
matrix-valued function on $\RR^d$ that is symmetric, uniformly
positive definite, and each $a_{ij}$ is bounded and $C^2$ on
$\overline D$. The vector $\cn (x) :=\frac12 A(x) \n (x)$ is called
the conormal vector at $x\in \partial D$. Let
$\sigma(x)=(\sigma_{ij}(x))$ be the positive symmetric square root of
$A(x)$.  For notational convenience, we use $\partial_i$ to denote
$\frac{\partial}{\partial x_i}$. For $\varphi \in C^2(\RR^d)$, let
\begin{equation}\label{e:1.1}
\LL \varphi (x):=
   \frac1{2\rho (x) }  \sum_{i,j=1}^d  \partial_i  \big( \rho (x)
   a_{ij} (x) \partial_j  \varphi  (x) \big).
\end{equation}
Let
${\bf b}(x)$ be the vector whose $k^{th}$ component is
  $$b_k(x)=\frac1{2\rho (x)} \sum_{i=1}^d  \partial_i
  ( \rho (x) a_{ik}(x)).
   $$
Let $B$ be standard $d$-dimensional Brownian motion and ${\bf v}$ a
bounded measurable vector field on $\partial D$. Consider the
following  diffusion process $X$ taking values in $\overline D$ such
that for all $t\geq 0$,
\begin{equation}\label{e:1.2}
\begin{cases}
dX_t=\sigma (X_t) \,dB_t+ \bfb  (X_t) \,dt +
\cn
 (X_t) \,dL_t+K_t\,dt, \\
 t\mapsto L_t \hbox{ is continuous and
    non-decreasing with } L_t= \int_0^t \1_{\partial D}(X_s) \,dL_s, \\
dK_t=  \v (X_t) \,dL_t.
\end{cases}
\end{equation}
In Theorem \ref{T:2.1}, we show that the above stochastic
differential equation (SDE) has a unique weak solution $(X, K)$ for
every starting point $(x_0, k_0)\in \overline D \times \RR^d$. The
solution $(X, K)$ of \eqref{e:1.2} is called a (symmetric) reflecting
diffusion on $D$ with  inert drift. Let $X^0$ be   symmetric
reflecting diffusion on $D$ with infinitesimal generator $\LL$ in
\eqref{e:1.1}; that is, $X^0$ is a continuous process taking values
in $\overline D$ such that for every $t\geq 0$,
\begin{equation}\label{e:1.3}
\begin{cases}
dX^0_t=\sigma (X^0_t) \,dB_t+ \bfb  (X^0_t) \,dt +
\cn (X^0_t) \,dL^0_t , \\
 t\mapsto L^0_t \hbox{ is continuous and
    non-decreasing with } L^0_t= \int_0^t \1_{\partial D}(X^0_s) \,dL^0_s .
\end{cases}
\end{equation}
The continuous non-decreasing process $L^0$ is called the boundary
local time of $X^0$. When $\sigma$ is the identity matrix, $X^0$ is
distorted reflecting Brownian motion on $\overline D$. The main
observation of Section \ref{sec:diff} is that the reflecting
diffusion $(X, K)$ with inert drift  can be obtained from the
reflecting diffusion $\left( X^0, K_0+\int_0^\cdot \n (X_s^0)\,dL^0_s
\right)$ without inert drift  by a suitable Girsanov transform, and
vice versa.

The questions of strong existence and strong uniqueness for solutions
to \eqref{e:1.2} are discussed in Section \ref{sec:pu}. They are
resolved positively under the  additional assumption that $\v =a_0
{\bf u}$ for some constant $a_0\in \RR$. This section uses some ideas
and results from \cite{LS}, but the main idea of our argument is
different from the one in that paper, and we believe ours is somewhat
simpler.

In Section \ref{sect:potential} we consider symmetric diffusions with
drift given as the gradient of a potential. We do this  because the
analysis of the stationary distribution is much easier in the case of
a smooth potential than the ``singular'' potential representing
reflection on the boundary of a domain. More specifically, let
$\Gamma$ be a symmetric positive definite constant   $d\times
d$-matrix and $V\in C^1(D)$ tending to infinity in a suitable way as
$x$ approaches the boundary $\partial D$. Consider the following
diffusion process $X$ on $D$ associated with  generator $\LL =
\frac12 e^V \left( e^{-V} A\nabla \right)$ but with an additional
``inert'' drift $K_t$:
\begin{equs}\label{e:1.4}
\begin{cases}
dX_t &= \sigma (X_t) \,dB_t +{\bf b}(X_t) \,dt -\frac12
(A\nabla V)(X_t)\, dt + K_t\, dt \;, \\
dK_t &= -\frac12 \Gamma \, \nabla V (X_t)\, dt\;,
 \end{cases}
\end{equs}
Here $A=A(x)=\sigma^T \sigma$ is a $d\times d$ matrix-valued function
that is uniformly elliptic and bounded, with $\sigma_{ij} \in C^1
(\overline D)$, and ${\bf b}=(b_1, \cdots, b_d)$ with
$b_k(x):=\frac12 e^{V(x)} \sum_{i=1}^d
\partial_i \left( e^{-V(x)}a_{ik}(x) \right)$.
We show in Theorem \ref{T:4.3} that if $e^{-V/2}\in W^{1,2}_0(D)$,
then the SDE \eqref{e:1.4} has a unique conservative solution $(X, K)$ which
has a stationary probability distribution
$$\pi (dx, dy)= c_1 \1_D (x) e^{ -V(x)}  \,
e^{-(\Gamma^{-1}y, y)} \, dx \,dy.
$$
In other words, $(X, K)$ has a product form stationary distribution,
with $c \, e^{-V}\,dx$ in the $X$ component, and a Gaussian
distribution with covariance matrix $\Gamma$ in the $K$ component.
Observe that $c\, \, e^{-V}\,dx$ with normalizing constant $c>0$ is
the stationary distribution for the conservative symmetric diffusion
$X^0$ in $D$ with generator $\frac12 e^V \nabla (\nabla e^{-V} A
\nabla )$. The uniqueness of the stationary measure for solutions of
\eqref{e:1.4} is addressed in Proposition \ref{prop:unique} under
much stronger conditions.

In Section \ref{sec:weakconv}, we prove weak convergence of a
sequence of solutions of \eqref{e:1.4} to symmetric reflecting
diffusions with inert drift given by \eqref{e:1.2} where $\v=\Gamma
\, \n$ for some symmetric positive definite constant matrix. This
implies
$$ \pi(dx, dy)=c_2 \1_D(x) \rho (x) \, e^{-(\Gamma^{-1}y, y )} \, dx\,dy
$$
is a stationary distribution for solutions $(X, K)$ of \eqref{e:1.2}
with $\v =\Gamma \n$; see Theorem \ref{T:wc.3}. Observe that $c_3 \,
\rho (x) \,dx$, where $c_3>0$ is a  normalizing constant, is the
stationary distribution for the symmetric reflecting diffusion $X^0$
on $\overline D$ with generator $\frac1{2\rho }\, \nabla (\rho A
\nabla)$ in \eqref{e:1.3}.

Finally, Section \ref{sec:irreducibility} completes our program by
showing irreducibility in the sense of Harris for reflecting
diffusions with inert drift given by \eqref{e:1.2} under the
assumption that $\v=\Gamma \, \n$ for some symmetric positive
definite constant matrix,  $A $ is the identity matrix,   ${\bf u} =
\n$,  and ${\bf b}  =  \frac12 \nabla \log \rho$. Uniqueness of the
stationary distribution follows from the irreducibility.

The level of generality of our results varies throughout the paper,
for technical reasons. We leave it as an open problem to prove a
statement analogous to Theorem \ref{T:irr.1} in the general setting
of Theorem \ref{T:2.1}. The calculation at the beginning of Section
\ref{sect:potential} indicates that in order for the solution $(X,
K)$ of \eqref{e:1.2} to have a product form stationary distribution,
the inert drift vector field $\v$ has to be of the form $\Gamma \,
\n$ for some symmetric positive definite constant matrix $\Gamma$. We
leave its verification as another open problem.

\medskip

Our model belongs to a family of processes with
``reinforcement'' surveyed by Pemantle in \cite{Pem}; see
especially Section 6 of that survey and references therein.
Papers \cite{BenLR, BenR1, BenR2} study a process with a drift
defined in terms of a ``potential'' and the ``normalized''
occupation measure. While there is no direct relationship to
our results, there are clear similarities between that model
and ours.

We are grateful to Boris Rozovsky and David White for very useful advice.

\section{Weak existence and uniqueness}
\label{sec:diff}

This section is devoted to weak existence and uniqueness of
solutions to an SDE representing a family of reflecting
diffusions with inert drift.

Let $D$ be a bounded $C^2$ domain in $\RR^d$, $d\geq 2$, and
denote by $\n$ the inward unit normal vector field on $\partial
D$. Throughout this paper, all vectors are column vectors. Let
$\rho$ be a $C^2$ function on $\overline D$ that is bounded
between two positive constants and $A(x)=(a_{ij}(x))_{n\times n}$
be a matrix-valued function on $\RR^d$ that is symmetric,
uniformly positive definite, and each $a_{ij}$ is bounded and
$C^2$ on $\overline D$. The vector $\cn (x) :=\frac12 A(x) \n (x)$
is called the conormal vector at $x\in
\partial D$. Clearly there exists $c_1 >0$ such that
$\cn(x) \cdot \n(x) > c_1$ for all $x\in \prt D$. Let
$\sigma(x)=(\sigma_{ij}(x))$ be the positive symmetric square root
of $A(x)$. For $\varphi \in C^2(\RR^d)$, let
$$\LL \varphi (x):=
   \frac1{2\rho (x) }  \sum_{i,j=1}^d \partial _i \left( \rho (x)
   a_{ij} (x) \partial_j \varphi (x) \right).
$$
Let $\bfb(x)$ be the vector whose $k^{th}$ component is
  $$b_k(x)=\frac1{2\rho (x)} \sum_{i=1}^d \partial_i( \rho (x) a_{ik}(x)). $$
 Thus $b_k$
is the same as $\LL$ operating on the function $f_k(x)=x_k$.

Let $B$ be standard $d$-dimensional Brownian motion
and ${\bf v}$ a bounded measurable vector field on $\partial D$.
Consider the following system of stochastic differential equations,
with the extra condition that $X_t \in \overline D$ for all $t\geq
0$:
\begin{equation}\label{eqn:1}
\begin{cases}
dX_t=\sigma (X_t) \,dB_t+ \bfb  (X_t) \,dt +
\cn
 (X_t) \,dL_t+K_t\,dt, \\
 t\mapsto L_t \hbox{ is continuous and
    non-decreasing with } L_t= \int_0^t \1_{\partial D}(X_s) \,dL_s, \\
dK_t=  \v (X_t) \,dL_t.
\end{cases}
\end{equation}

\medskip

The proof of the next theorem says that   reflecting diffusions
with inert drift can be obtained from the corresponding symmetric
reflecting diffusions without inert drift by suitable Girsanov
transforms, and vice versa.

\begin{thm}\label{T:2.1}
For every $x\in \overline D$ and $y\in \RR^d$ there exists a
unique weak solution $\{(X_t, K_t), t\in[0,\infty)\}$ to
(\ref{eqn:1}) with $(X_0, K_0)=(x, y)$.
\end{thm}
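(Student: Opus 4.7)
The plan is to exploit the Girsanov-based correspondence between solutions of \eqref{eqn:1} and of \eqref{e:1.3} that is flagged in the paragraph preceding the statement. Weak existence and uniqueness of the symmetric reflecting diffusion $X^0$ solving \eqref{e:1.3} under our hypotheses on $\rho$, $A$, and $\partial D$ are classical (Lions--Sznitman Skorokhod-map construction, or the Dirichlet form associated with $\LL$). Given such an $X^0$ starting at $x$ with boundary local time $L^0$, the process $K^0_t := y + \int_0^t \v(X^0_s)\,dL^0_s$ is of bounded variation, and the passage from $(X^0, K^0)$ to a solution of \eqref{eqn:1} (and back) is effected by absorbing $K^0_t\,dt$ into the $X$-equation via a change of measure.

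For existence, fix $T>0$, work on $(\Omega,\F,\P)$ on which $X^0$ is driven by $B$, and set
\begin{equation*}
M_t := \exp\left( \int_0^t \bigl(\sigma^{-1}(X^0_s) K^0_s\bigr)^{\!T} dB_s - \tfrac12 \int_0^t \bigl|\sigma^{-1}(X^0_s) K^0_s\bigr|^2 ds \right), \quad t\in[0,T].
\end{equation*}
Once $M$ is known to be a true martingale, $d\Q := M_T\, d\P$ defines a probability under which $\wt B_t := B_t - \int_0^t \sigma^{-1}(X^0_s) K^0_s\,ds$ is Brownian; the paths of $X^0$ and $L^0$ are unchanged, so $(X^0, K^0)$ is a weak solution of \eqref{eqn:1} under $\Q$ driven by $\wt B$, starting from $(x,y)$. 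The $\Q$'s so constructed are consistent in $T$ (since $M$ is a martingale) and extend to a global weak solution on path space. To verify the martingale property I localize by $\tau_n := T\wedge \inf\{t : L^0_t \geq n\}$: on $[0,\tau_n]$ the integrand is bounded, so $M^{\tau_n}$ is a true martingale by the standard Novikov criterion; the passage $n\to\infty$ uses that under $M^{\tau_n}\,d\P$ the process $(X^0, K^0)$ is a reflecting diffusion with drift of norm at most $|y|+n\|\v\|_\infty$, so $L^0_T$ has finite expectation that can be controlled well enough to give $\Q(\tau_n<T)\to 0$.

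For uniqueness, let $(X, K)$ be any weak solution of \eqref{eqn:1} starting at $(x,y)$ with driving Brownian motion $B$ and boundary local time $L$, and note that $K_t = y + \int_0^t \v(X_s)\,dL_s$ is a measurable functional of $(X, L)$. Run Girsanov in reverse with density $\exp\bigl(-\int_0^t(\sigma^{-1}(X_s)K_s)^{\!T} dB_s - \tfrac12\int_0^t |\sigma^{-1}(X_s)K_s|^2 ds\bigr)$ and the same localization; under the new measure $X$ satisfies \eqref{e:1.3}, so its law jointly with $L$ is uniquely determined, hence so is the law of $K$, and the joint law of $(X,K)$ under the original measure is uniquely recovered by inverting the density. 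The main obstacle throughout is verifying that the Girsanov density is a true martingale: since $K_t$ grows with $L_t$, Novikov's criterion fails to apply directly, and the delicate step is the passage to the limit in the localization, which depends crucially on the boundedness of $D$ and of $\v$.
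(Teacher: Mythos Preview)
Your Girsanov framework is exactly the paper's approach: start from the symmetric reflecting diffusion \eqref{e:1.3}, set $K^0_t = y + \int_0^t \v(X^0_s)\,dL^0_s$, change measure with the exponential martingale built from $\sigma^{-1}(X^0_s)K^0_s$, and run the transform in reverse for uniqueness. The localization via $\tau_n$ (equivalently the paper's $T_n=\inf\{t:|K_t|\ge 2^n\}$) is also the same.

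The gap is in the non-explosion step, which is in fact the bulk of the paper's proof. Your claim that ``under $M^{\tau_n}\,d\P$ the drift has norm at most $|y|+n\|\v\|_\infty$, so $L^0_T$ has finite expectation that can be controlled well enough to give $\Q(\tau_n<T)\to 0$'' does not close. For a reflecting diffusion in a bounded domain with a drift of size $c$ directed toward the boundary, the expected boundary local time by time $T$ is of order $cT$; with $c=|y|+n\|\v\|_\infty$ this gives $\E_{\Q_n}[L^0_T]=O(n)$, and Markov's inequality yields only $\Q_n(L^0_T>n)=O(1)$, not $o(1)$. A naive moment bound cannot beat this because the feedback is genuinely linear: local time feeds $K$, and $K$ (once pointing toward $\partial D$) feeds local time at a proportional rate. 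There is no obvious Gronwall inequality linking $\E[L_t]$ and $\int_0^t\E|K_s|\,ds$ that improves on this.

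What the paper does instead is a pathwise geometric argument under $\Q$. Working at scale $\eps=2^{-j}$ when $|K|\sim 1/\eps$, it shows that over a time window of length $\eps^2$, with probability bounded below by some $p_0>0$ independent of $\eps$, the martingale part stays within $\eps$ and then, using the $C^1$ structure of $\partial D$ together with the uniform transversality $\cn\cdot\n>c_1$, one deduces $L_{t+\eps^2}-L_t\le c\eps$ and hence $|K_{t+\eps^2}|-|K_t|\le c'\eps$. Iterating over $\sim\eps^{-2}$ such windows and comparing with Bernoulli trials gives $\Q(T_{j+1}-T_j\ge c)\ge p_1>0$ for every $j$, whence $T_\infty=\sum(T_{j+1}-T_j)=\infty$ $\Q$-a.s. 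This is the ``$K$ grows at most exponentially'' heuristic made rigorous, and it is where the boundedness of $D$ and of $\v$ actually enter. Your proposal identifies the right difficulty but does not supply this argument (or a substitute for it).
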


\begin{proof}
Consider the following SDE,
\begin{equation}\label{eqn:2.2}
\begin{cases}
dX_t=\sigma (X_t) \,dB_t+ \bfb (X_t) \,dt +
\cn  (X_t)\,dL_t , \\
 t\mapsto L_t \hbox{ is continuous and
    non-decreasing with } L_t= \int_0^t \1_{\partial D}(X_s) \,dL_s,
    \end{cases}
\end{equation}
with $X_t\in \overline D$ for every $t\geq 0$. Weak existence and
uniqueness of solutions to (\ref{eqn:2.2}) follow from \cite{C} or
\cite{DI}.
(We have been informed by the authors  of \cite{DI} that there is
a gap in the proof of Case 2 in \cite{DI}, but we need only Case 1. Moreover they
have informed us that a correction for Case 2 is in press.)
  The distribution of the solution to (\ref{eqn:2.2}) with
$X_0 = x \in D$ will be denoted by $\P_x$.

Note that the remaining part of the proof uses only the
$C^1$-smoothness of the domain and Lipschitz continuity of
$a_{ij}$ and $\rho$. Let $K_t:=y+\int_0^s \v (X_s) \,dL_s$ and
$\sigma^{-1}(x)$ be the inverse matrix of $\sigma (x)$. Define for
$t\geq 0$,
$$ M_t=\exp\left( \int_0^t \sigma^{-1} (X_s) K_s \,dB_s-\frac 12 \int_0^t
|\sigma^{-1} (X_s) K_s|^2 \,ds \right).
$$
It is clear that $M$ is a continuous positive local martingale
with respect to the minimal augmented filtration $\{\FF_t,
t\geq 0\}$ of $X$. Let $T_n = \inf\{t>0: |K_t| \geq 2^n\}$.
Since $L_t <\infty$ for every $t<\infty$, $\P_x$-a.s., and
$|\v|$ is uniformly bounded, we see that $K_t <\infty$ for
every $t<\infty$, $\P_x$-a.s. Hence, $T_\infty := \lim_{n\to
\infty}T_n=\infty$, $\P_x$-a.s. For every $n\geq 1$,
$\{M_{T_n\wedge t}, \FF_t, t\geq 0\}$ is a martingale.

For $x\in \overline D$, $y\in \RR^d$ and $n\geq 1$, define a
new probability measure $\bQ_{x,y}$ by
$$ d \bQ_{x,y} = M_{T_n}
 \, d\P_x \qquad \hbox{on } \FF_{T_n}  \hbox{ for every } n\geq
1.
$$
It is routine to check this defines a probability measure
$\bQ_{x, y}$ on $\FF_\infty$. By the Girsanov theorem (cf.
\cite{RY}), the process
$$W_t:= B_t -\int_0^t \sigma^{-1} (X_s) K_s \,ds,
$$
is a Brownian motion up to time $T_n$  for every $n\geq 1$, under
the measure $\bQ_{x,y}$. Thus we have from (\ref{eqn:2.2}) that
under $\bQ_{x,y}$, up to time $T_n$  for every $n\geq 1$,
$$ dX_t=\sigma (X_t) \,dW_t+ \bfb (X_t) \,dt +
\cn  (X_t)\,dL_t + K_t \,dt .
$$
In other words, $\{(X_t, K_t), 0\leq t < T_\infty\}$ under the
measure $\bQ_{x,y}$ is a weak solution of (\ref{eqn:1}).

\bigskip

We make a digression on the use of the strong Markov property.
At this point in the proof, we cannot claim that $(X,K)$ is a
strong Markov process. Note however that if $T$ is a finite
stopping time, then $(X_{t+T}, K_{t+T})$ is again a solution of
\eqref{eqn:1} with initial values $(X_T,K_T)$. We can therefore
use regular conditional probabilities as a technical substitute
for the strong Markov property; this technique has been
described in great detail in Remark 2.1 of \cite{BBC}, where we
used the name ``pseudo-strong Markov property.'' Throughout the
remainder of this proof we will use the pseudo-strong Markov
property in place of the traditional strong Markov property and
refer the reader to \cite{BBC} for details.

\medskip

We will next show that $T_\infty=\infty$, $\bQ_{x,y}$-a.s., i.e.,
the process is conservative. This is the same as saying $|K|$ does
not ``explode'' in  finite time under $\bQ_{x, y}$. The intuitive
reason why this should be true is the following. Consider a
one-dimensional
 Brownian motion starting at $1$ with a
very large constant
 negative drift of size
$c$
 and reflect it at the origin. Then a simple calculation shows
that the local time accumulated at the origin by this process up
to time $1$ is also of order
$c$.
 This suggests that if $K$ is very large, the local time
accumulated in a time interval of order $1$ by $X$ on the
boundary $\prt D$ will be approximately proportional to $K$. Since
this feeds back into the right hand side of
the definition of $K$ in \eqref{eqn:1},
 one would expect $K$ to grow at most exponentially fast in
time.

Consider $\eps=2^{-j}>0$ where $j\geq 1$ is an integer. Our
argument applies only to small $\eps>0$ so we will now impose some
assumptions on $\eps$. Consider $x_0 \in \prt D$ and let
$CS_{x_0}$ be an orthonormal coordinate system such that $x_0=0$
in $CS_{x_0}$ and the positive part of the $d$-th axis contains
$\n(x_0)$. Let $\n_0 = \n(x_0)$. Recall that $D$ has a
$C^1$ boundary and that there exists $c_1 >0$ such that
$\cn(x) \cdot \n(x) > c_1$ for all $x\in \prt D$. Hence there
exist $\eps_0>0$ and $c_2>0$, such that for all $\eps\in
(0,\eps_0)$, every $x_0\in \prt D$ and all points $x=(x_1, \dots ,
x_d) \in \prt D \cap B(x_0, (6/c_2 +5)\eps)$, we have $|x_d| <
\eps/2$ and $\cn(x) \cdot \n_0 > c_2$, in $CS_{x_0}$. Since
$|\v(x)| \leq c_3 < \infty$ for all $x\in \prt D$, we can make
$\eps_0>0$ smaller, if necessary, so that $(2c_2)/(c_3 \eps) -
5\eps \geq \eps$ for all $\eps\in (0,\eps_0]$.

Let
\begin{equs}
 S_0 &= \inf\{t>0: |K_t| \geq 1/\eps\} , \\
 S_{n+1} &= \inf\{t> S_{n}: |\, |K_t|-|K_{S_{n}}|\,| \geq (6c_3/c_2)\eps\},
 \qquad n\geq 0.
\end{equs}
We will estimate $\bQ_{x,y} (S_{n+1} - S_{n} > \eps^2 \mid
\FF_{S_n})$ for $n=0, \, \dots,  \, [\eps^{-2}c_2/(6c_3)]$. Note
that $X_{S_n} \in \prt D$ for every $n$ because $K$ does not
change when $X$ is in the interior of the domain. Note also that
$|K_{S_n}| \le 2/\eps$ for every $n \le \eps^{-2}c_2/(6c_3)$ by
construction.

For $n\geq 0$, let
\begin{equs}
Y^{(n)}_t &= \int_{S_n}^{S_n+t} \sigma (X_s) \,dW_s+
\int_{S_n}^{S_{n}+t} \bfb (X_s) \,ds,\\
F_n &=\left\{
 \sup _{t\in[0, \eps^2]} |Y^{(n)}_t| < \eps \right\}.
\end{equs}
It is standard to show that there exist $\eps_0, p_0>0$, not depending
on $n$, such that if $\eps<\eps_0$, then
$$\bQ_{x,y} (F_n \mid \FF_{S_n}) \geq p_0.
$$
Let
 $$R_n = (S_n+\eps^2) \land \inf\{t \geq S_n:
 |K_t| \geq 4/\eps\}.
 $$

Suppose that the event $F_n$ holds. We will analyze the path of the process
$\{(X_t,K_t), S_n \leq t \leq R_n\}$. First, we will argue that
$X_t \in B( X_{S_n}, (6/c_2 + 5)\eps)$ for $S_n \leq t \leq R_n$.
Suppose otherwise. Let $U_1 = \inf\{t> S_n: X_t \notin B( X_{S_n},
(6/c_2 + 5)\eps)\}$ and $ U_2 = \sup\{t< U_1: X_t \in \prt D\}$.
We have
 $$(6/c_2 + 5)\eps =|X_{U_1} - X_{S_n}|
 = \left | Y_{U_1 - S_n} + \int_{S_n}^{U_1} K_t \,dt
 + \int_{S_n}^{U_1} \cn(X_t) \,dL_t \right| .
 $$
So on $F_n\cap \{ U_1 \leq R_n\}$,
\begin{equs}
 \left|\int_{S_n}^{U_2} \cn(X_t) \,dL_t \right| &=
 \left|\int_{S_n}^{U_1} \cn(X_t) \,dL_t \right| \\
& \geq (6/c_2 + 5) \eps - \left | Y_{U_1 - S_n} + \int_{S_n}^{U_1} K_t\, dt\right| \\
 &\geq (6/c_2 + 5) \eps - \eps - \int_{S_n}^{S_n+\eps^2} 4/\eps \,
 dt \\
 &\geq (6/c_2 + 5)\eps - \eps - 4\eps \\
 &= (6/c_2) \eps.
\end{equs}
We will use the coordinate system $CS_{X_{S_n}}$ to make the
following observations. The last formula implies that the
$d$-th coordinate of $\int_{S_n}^{U_2} \cn(X_t) \,dL_t$ is not
less than $6\eps$. Hence the $d$-th coordinate of $X_{U_2}$
must be greater than or equal to
 $$6 \eps -  \left | Y_{U_1 - S_n} + \int_{S_n}^{U_1} K_t \,dt\right|
 \geq 6 \eps - \eps - \int_{S_n}^{S_n+\eps^2} 4/\eps \,dt
 \geq 6\eps - \eps - 4\eps = \eps.
 $$
This is a contradiction because $X_{U_2}\in \prt D$ and the $d$-th
coordinate for all $x\in \prt D \cap B( X_{S_n}, (6/c_2 + 5)\eps)$
is bounded by $ \eps/2$. So on $F_n$, we have $X_t \in B(X_{S_n},
(6/c_2 + 5)\eps)$ for $t\in [S_n, R_n]$.

We will use a similar argument to show that $R_n = S_n + \eps^2$ on
$F_n$. Suppose that $R_n < S_n + \eps^2$. Then $S_n \leq R_n$ and
$|K_{R_n}|=4/\eps$. Let $U_3 = \sup\{t< R_n: X_t \in \prt D\}$. The
definition of $S_n$ implies that $K_{S_n} \leq 2/\eps$ for $n\leq
[\eps^{-2}c_2/(6c_3)]$. We have
 $$\left|\int_{S_n}^{R_n} \v(X_t) \,dL_t\right|
 = |K_{R_n } - K_{S_n}|\geq 4/\eps - 2/\eps = 2/\eps.
 $$
Since $|\v(x)|\leq c_3$, we have $L_{R_n} - L_{S_n} \geq
 2/(c_3\eps)$, so the $d$-th
coordinate of $\int_{S_n}^{R_n} \cn(X_t) \,dL_t$, which is the same as
$\int_{S_n}^{U_3} \cn(X_t) \,dL_t$, is bounded below by $(2c_2)/(c_3
\eps)$. But then on $F_n$, the $d$-th coordinate of $X_{U_3}$ must
be greater than or equal to
 $$(2c_2)/(c_3 \eps) -  \left | Y_{R_n - S_n} + \int_{S_n}^{R_n} K_t \,dt\right|
 \geq (2c_2)/(c_3 \eps) - \eps - \int_{S_n}^{S_n+\eps^2} 4/\eps \,dt
 =(2c_2)/(c_3 \eps) - 5\eps \geq \eps.
 $$
This is a contradiction because $X_{U_3}\in \prt D$ and the $d$-th
coordinate for all $x\in \prt D \cap B(x_0, (6/c_2 + 5)\eps)$ is
bounded by $ \eps/2$. So on $F_n$ we have $R_n = S_n + \eps^2$.

We will now use the same idea to show that on $F_n$, $L_{S_n+\eps^2}
- L_{S_n} < (6/c_2)\eps$. Assume that $L_{S_n+\eps^2} - L_{S_n} \geq
(6/c_2)\eps$. Let $U_4 = \sup\{t\leq S_n+\eps^2: X_t \in \prt D\}$.
The $d$-th coordinate of $\int_{S_n}^{U_4} \cn(X_t) \,dL_t$ is bounded
below by $6 \eps$. But then on $F_n$, the $d$-th coordinate of
$X_{U_3}$ must be greater than or equal to
 $$6 \eps -  \left | Y_{R_n - S_n} + \int_{S_n}^{R_n} K_t \,dt\right|
 \geq 6 \eps - \eps - \int_{S_n}^{S_n+\eps^2} 4/\eps \,dt
 \geq \eps.
 $$
This is a contradiction because $X_{U_3}\in \prt D$ and the $d$-th
coordinate for all $x\in \prt D \cap B(x_0, (6/c_2 + 5)\eps)$ is
bounded by $ \eps/2$. We see that if the event $F_n$ holds, then
 $$|\,|K_{S_n+\eps^2}| - |K_{S_n}|\,|\leq
 \int_{S_n}^{S_n+\eps^2} |\v(X_t)| \,dL_t
 \leq c_3 (L_{S_n+\eps^2} - L_{S_n})
 \leq  c_3 (6/c_2)\eps.
 $$
Hence, if the event $F_n$ holds, then $S_{n+1} > S_n +\eps^2$. We see that
$$\bQ_{x,y} (S_{n+1} > S_n +\eps^2 \mid \FF_{S_n}) \geq p_0.
$$
Recall that we took $\eps = 2^{-j}$ so that $S_0 = T_j$. The
last estimate and the pseudo-strong Markov property applied at
stopping times $S_n$ allow us to apply some estimates known for
a Bernoulli sequence with success probability $p_0$ to the
sequence of events $\{S_{n+1} > S_n +\eps^2\}$. Specifically,
there is some $p_1>0$ so that for all sufficiently small
$\eps=2^{-j}>0$,
\begin{equs}
\bQ_{x,y} (T_{j+1} &- T_j \geq (c_2/(6c_3))p_0/2 \, \big| \,
\FF_{T_j}) \\
 &= \bQ_{x,y} \left( \sum_{n=0}^
{[\eps^{-2}c_2/(6c_3)]}
  (S_{n+1} - S_n)
  \geq (c_2/(6c_3))p_0/2 \, \Big| \,
\FF_{T_j}\right) \\
&\geq  \bQ_{x,y} \left( \frac{\eps^2}{c_2/(6c_3)}
\sum_{n=0}^
{[\eps^{-2}c_2/(6c_3)]}
   {\bf 1}_{\{S_{n+1} > S_n +\eps^2\}}
 \geq p_0/2 \, \Big| \, \FF_{T_j}\right) \\
 &\geq  p_1.
\end{equs}
Once again, we use an argument based on comparison with a
Bernoulli sequence, this time with success probability $p_1$.
We conclude that there are infinitely many $n$ such that $
T_{j+1} - T_j \geq (c_2/(6c_3)) p_0/2$, $\bQ_{x,y}$-a.s. We
conclude that $T_\infty = \infty$, $\bQ_{x,y}$-a.s., so our
process $(X_t,K_t)$ is defined for all $t\in [0,\infty)$.

Next we will prove weak uniqueness. Suppose that $\bQ'_{x,y}$ is
the distribution of any weak solution to (\ref{eqn:1}) and define
$\P'_x$ by
$$ d\P'_x = \frac{1}{M_{T_n}}\, d \bQ'_{x,y}
 \qquad \hbox{on } \FF_{T_n}
 \hbox{ for every } n\geq 1.
$$
Reversing the argument in the first part of the proof, we conclude
that $X$ under $\P'_x$ solves (\ref{eqn:2.2}). It follows from the
strong uniqueness for (\ref{eqn:2.2}) that $\P'_x = \P_x$, and,
therefore, $\bQ'_{x,y} = \bQ_{x,y}$ on $\FF_{T_n}$. Since this holds
for all $n$, we see that $\bQ'_{x,y} = \bQ_{x,y}$ on $\FF_\infty$.
\end{proof}

\begin{remark}\label{R:2.2}{\rm
The assumptions that $D$ is a bounded $C^2$ domain and that the
$a_{ij}$'s and $\rho$ are $C^2$ on $\overline D$ are only used in
the weak existence and uniqueness of the solution $X$ to
\eqref{eqn:2.2}. The remaining proof only requires that $D$ be a
bounded $C^1$ domain and that the $a_{ij}$'s and $\rho$ are
Lipschitz in $\overline D$. In fact, when $D$ is a   bounded $C^1$
domain and the $a_{ij}$'s and $\rho$ are Lipschitz on $\overline
D$, the weak existence of solutions to (\ref{eqn:2.2}) follows
from \cite{C} and so we have the weak existence to the SDE
\eqref{eqn:1}. We believe the weak uniqueness for solutions to
\eqref{eqn:2.2} and consequently to \eqref{eqn:1} also holds under
this weaker assumption by an argument analogous to that in
\cite[Section 4]{BBC2}. However to give the full details of the
proof would take a significant number of pages, so we leave the
details to the reader. \qed }
\end{remark}

\bigskip

We remarked earlier that if $T$ is a finite stopping time, then
$(X_{t+T}, K_{t+T})$ is again a solution to \eqref{eqn:1} with
starting point $(X_T,K_T)$. This observation together with the
weak uniqueness of the solution to \eqref{eqn:1} implies that
$(X_t,K_t)$ is a strong Markov process in the usual sense;
cf.~\cite[Section I.5]{Bass97}.

\section{Pathwise uniqueness}
\label{sec:pu}

We will prove strong existence and strong uniqueness for solutions
to \eqref{eqn:1} under assumptions stronger than those in Section
\ref{sec:diff}, namely, we will assume that the vector field ${\bf
v}$ is a fixed constant multiple of ${\bf u}$. Throughout this
section, $D$ is a bounded $C^2$ domain in $\RR^d$, each $a_{ij}$
and $\rho$ are $C^{1,1}$ on $\overline D$ and so the vector ${\bf
b}$ in \eqref{eqn:1} is Lipschitz continuous. Our approach to the
strong existence and uniqueness  for solutions to \eqref{eqn:1}
uses some ideas and results from \cite{LS}, but the main idea of
our argument is different from the one in that paper, and we
believe ours is somewhat simpler. It is probably possible to
produce a proof along the lines of \cite{LS}, but a detailed
version of that argument adapted to our setting would be at least
as long as the one we give here.

\begin{thm}\label{T:strong}
Suppose that ${\bf v}\equiv a_0 {\bf u}$ for some constant $a_0\in
\RR$. For each $(x,y) \in \overline D \times \RR^d$, there exists a
unique strong solution $\{(X_t, K_t), t\in[0,\infty)\}$ to
(\ref{eqn:1}) with $(X_0, K_0)=(x, y)$.
\end{thm}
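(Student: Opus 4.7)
The plan is to exploit the algebraic structure that appears when $\mathbf{v} \equiv a_0 \mathbf{u}$ in order to decouple the system \eqref{eqn:1} into (i) an oblique Skorokhod problem for $X$ and (ii) an ordinary (non-reflected) SDE. The crucial observation is that the boundary pushing term $\mathbf{u}(X_t)\,dL_t$ enters the $X$-equation with coefficient $1$ and the $K$-equation with coefficient $a_0$, so an appropriate linear combination kills the local-time contribution.

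If $a_0 = 0$, then $K_t \equiv K_0$ is a deterministic constant and \eqref{eqn:1} collapses to a single reflecting SDE with Lipschitz coefficients, for which strong existence and uniqueness is classical (\cite{LS}). Assume henceforth $a_0 \neq 0$ and introduce
$$\wt X_t \;:=\; X_t - \tfrac{1}{a_0}(K_t - K_0).$$
A direct computation gives
$$d\wt X_t \;=\; \sigma(X_t)\,dB_t + \mathbf{b}(X_t)\,dt + K_t\,dt, \qquad \wt X_0 = x,$$
while $X$ is recovered from $\wt X$ via the oblique Skorokhod identity $X_t = \wt X_t + \int_0^t \mathbf{u}(X_s)\,dL_s$ with $X_t \in \overline D$, and then $K_t = K_0 + a_0(X_t - \wt X_t)$.

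I would then set up the Picard iteration $\Phi \colon \wt X \mapsto \wt X'$ defined as follows: given $\wt X$, let $(X,L)$ be the output of the oblique Skorokhod map with conormal reflection field $\mathbf{u}$ applied to $\wt X$; set $K_t := K_0 + a_0(X_t - \wt X_t)$; then define
$$\wt X'_t \;:=\; x + \int_0^t \sigma(X_s)\,dB_s + \int_0^t \mathbf{b}(X_s)\,ds + \int_0^t K_s\,ds.$$
A fixed point of $\Phi$ is precisely a strong solution of \eqref{eqn:1}. The main analytic ingredient is the Lipschitz continuity of the oblique Skorokhod map in the sup norm on compact time intervals,
$$\sup_{s\le t}|X_1(s) - X_2(s)| \;\le\; C\,\sup_{s\le t}|\wt X_1(s) - \wt X_2(s)|,$$
which holds under the standing assumptions (bounded $C^2$ domain, $\mathbf{u}$ Lipschitz, $\mathbf{u}\cdot\mathbf{n} \ge c_1 > 0$) by the results of \cite{LS}. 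Combining this with the Lipschitz regularity of $\sigma$ and $\mathbf{b}$ and with the Burkholder--Davis--Gundy inequality yields
$$\E \sup_{s \le t}|\wt X'_1(s) - \wt X'_2(s)|^2 \;\le\; C(t + t^2)\, \E \sup_{s \le t}|\wt X_1(s) - \wt X_2(s)|^2,$$
so $\Phi$ is a contraction on $[0,t_0]$ for $t_0$ small enough (independent of $(x,y)$ on bounded sets of initial data). A patching argument then extends the unique strong solution to any finite horizon, and non-explosion on $[0,\infty)$ is guaranteed by Theorem~\ref{T:2.1}. Pathwise uniqueness globally follows from the same Gronwall-type estimate applied directly to the difference of two putative strong solutions.

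The main obstacle is the Lipschitz continuity of the oblique Skorokhod map for a Lipschitz conormal reflection field; I would import this as a black box from \cite{LS}. Once that is in hand, everything reduces to a routine contraction/Gronwall argument applied to the transformed process $\wt X$. The virtue of the substitution is that it converts the apparently intractable coupled reflecting system into a standard SDE plus a standard Skorokhod problem, which explains the authors' remark that the argument is simpler than a direct adaptation of \cite{LS}.
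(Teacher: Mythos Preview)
Your approach is correct but takes a genuinely different route from the paper's. Both arguments hinge on the same algebraic observation---that $\mathbf{v}=a_0\mathbf{u}$ lets one express $K_t-K_0$ as $a_0$ times the local-time integral, hence as $a_0(X_t-\wt X_t)$ for a process $\wt X$ with no boundary term---and the paper uses this in the same way you do to obtain the Gronwall bound on $|K-K'|$. The divergence is in how one then controls $|X-X'|$. You outsource this to the Lipschitz continuity of the oblique Skorokhod map and run a clean Picard/contraction argument; the paper instead proves pathwise uniqueness directly by applying It\^o's formula to the Lions--Sznitman quadratic form $(X-X')^T\Lambda(X)(X-X')$, controlling each piece via a sequence of stopping times $T_k$ and a Bernoulli-comparison argument, and then obtains strong existence from weak existence plus pathwise uniqueness via Yamada--Watanabe. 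Your route is shorter and more transparent provided the black box is available; one caveat is that the sup-norm Lipschitz continuity of the oblique Skorokhod map is not stated as such in \cite{LS} (that paper supplies the test function $\Lambda$, from which Lipschitz continuity can be \emph{derived}, but the ready-made statement is more naturally cited from \cite{DI}, Case~1). The paper's route is more self-contained, and the authors themselves remark that a direct \cite{LS}-style adaptation---which is essentially what you propose---is possible but not obviously shorter once the Skorokhod-map estimate is unpacked.
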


\begin{proof}
When $a_0=0$, $K_t=K_0$ for every $t\geq 0$. In this case, the
result follows from \cite{DI} (as mentioned above there is a gap
in the proof of Case 2 in \cite{DI}; we need only Case 1). So
without loss of generality, we assume $a_0\not=0$.

First we will prove pathwise uniqueness. Suppose that there exist
two solutions $(X_t, K_t)$ and $(X'_t, K'_t)$, driven by the same
Brownian motion $B$, starting with the same initial values $(X_0,
K_0) = (X'_0, K'_0)=(x, y)$, and such that $(X_t, K_t) \ne (X'_t,
K'_t)$ for some $t$ with positive probability.

According to Lemma 4.1 of \cite{LS}, there exists a matrix-valued
function $\Lambda (x) = \{\lambda_{ij}(x)\}_{1\leq i,j \leq d}$,
$x\in \RR^d$, such that $x\to \Lambda(x)$ is  uniformly elliptic
and in $C^2_b$ and such that the following formulas \eqref{eqf:5} and
\eqref{eqf:1}  hold. We
have
\begin{equation}\label{eqf:5}
 \cn(x)^T \Lambda(x) = \n(x)^T \qquad \hbox{for }
 x\in \prt D.
\end{equation}
 Moreover, there exists $c_1<\infty$ such that
\begin{equation}\label{eqf:1}
 c_1 |x-x'|^2 + \cn(x)^T \Lambda(x) (x'-x) \geq 0
 \qquad \hbox{for } x\in \prt D \hbox{ and } x'\in \ol D.
\end{equation}
 In our setting, we can take
$\Lambda = 2A^{-1}$. Let $c_2>1$ be such that
\begin{equation}\label{eqn:c1}
 c_2^{-1} I_{d\times d} \leq \Lambda(x)\leq
c_2 I_{d\times d} \qquad \hbox{for every } x\in \RR^d,
\end{equation}
 where $I_{d\times d}$ is the  $d\times d$-dimensional identity matrix.
 Define $\lambda:=c_2^3$.

 Let
 \begin{equs}
 U_t &= \int_0^t \sigma(X_t)  \,dB_t
 + \int_0^t \bfb (X_t) \,dt,\\
 V_t &= \int_0^t  K_t \,dt,
 \end{equs}
and define $U'_t$ and $ V'_t$ in a similar way relative to
$X'$.

We fix an arbitrary $p_1<1$ and an integer $k_0$ such that in $k_0$
Bernoulli trials with success probability $1/2$, at least $k_0/4$ of
them will occur with probability $p_1$ or greater. Let $t_1=1/k_0$.

Consider some
$ \eps ,c_0>0$,
 and let
 \begin{equs}
 T_1 &= \inf\{t>0: |X_t - X'_t|
 >0
 \hbox{  or  } |U_t -U'_t|
 >0
 \hbox{  or  } |K_t -K'_t|
 >0
 \},\\
 T_k &= (T_{k-1} + t_1) \land
 \inf\{t>T_{k-1}: |X_t - X'_t| \vee  |U_t -U'_t| \geq
 \lambda^{k-1} \eps \}\\
 & \qquad \land\ \inf\{t>T_{k-1}:
 |L_t - L_{T_{k-1}}| \lor |L'_t - L'_{T_{k-1}}|> c_0
 \}, \qquad k\geq 2.
 \end{equs}
We will specify the values of
 $\eps$ and $c_0$ later in the proof.

It is easy to see from (\ref{eqn:1}) that it is impossible to have
$X_t$ equal to $ X'_t$ on some random interval $[0,T^*]$, and at
the same time $U_{t} \ne U'_{t}$ or $K_{t} \ne K'_{t}$ for some
$t\in[0,T^*]$. Hence, with probability 1, for every $\delta >0$
there exists $t_*\in (T_1, T_1+\delta]$ such that $X_{t_*} \ne
X'_{t_*}$.

The idea of our pathwise uniqueness proof is as follows. Define
 $$
S = (T_1 + 1/4)\land \inf\{t\geq T_1: (L_t - L_{T_1})\vee (L'_t -
L'_{T_1}) \geq c_0/(4t_1)\}.
 $$
We will show that for
any $p_1<1$ and integer $k_0=k_0(p_1)$ as defined above and every
$\eps>0$,
 $$\P \left( \sup_{t\in [0, S]}|X_t - X'_t| \leq \eps \lambda^{k_0} \right)\geq p_1.
 $$
 As $\eps>0$ is arbitrary, we have
 $$  \P \left(  X_t = X'_t \hbox{ for every } t\in [0, S] \right)\geq
 p_1>0,
 $$
which contradicts the  definition of
$T_1$, in view of the remarks following the definition.
 This contradiction proves the pathwise uniqueness.

\medskip

Gronwall's inequality says that if $g(t)$ is nonnegative and
locally bounded and $g(t)\leq a+b\int_0^t g(s)\, ds$, then
$g(t)\leq ae^{bt}$. Suppose now that $f$ is a nonnegative
nondecreasing function and $g(t)\leq f(t)+b\int_0^t g(s)\, ds$.
Applying Gronwall's inequality for $t\leq t_1$ with $a=f(t_1)$,
we have
\begin{equation}\label{eqf:2}
 g(t_1)\leq f(t_1)e^{bt_1}.
\end{equation}
We apply the inequality with
$g(t)=|K_{t}-K'_{t}|$
 and
 $$f(t)=
 |K_{T_1}-K'_{T_1}| +
 \sup_{T_1\leq s\leq t} \left(|X_s-X'_s| +|U_s-U'_s|\right).
 $$
 Since ${\bf v} \equiv a_0 {\bf u}$, we have
 $\int_0^t {\bf u}(X_s) \,dL_s =\frac1{a_0}(K_t-K_0)$ and so
 $$K_t-K_0=a_0 (X_t-x_0-U_t-V_t),$$
and similarly for $K'$. Thus
$$
K_t-K'_t=
 a_0(X_t-X'_t-U_t+U'_t-V_t+V'_t),
$$ and hence for $t \geq T_1$,
$$
|K_{t}-K'_{t}|\leq
 |K_{T_1}-K'_{T_1}|
 + 2|a_0| \left( \sup_{T_1\leq s\leq t} (|X_s-X'_s|+|U_s-U'_s| )+
 \int_{T_1}^t |K_s-K'_s|\, ds\right).
 $$
  By (\ref{eqf:2}), for $t\geq T_1$,
$$
|K_{t}-K'_{t}|
 \leq e^{2|a_0|( t-T_1)} \left( |K_{T_1}-K'_{T_1}| + 2|a_0|
 \sup_{T_1\leq s\leq t} (|X_s-X'_s|+|U_s-U'_s|
 )\right).
 $$
Recall that $t_1=1/k_0$.
 It follows that $T_{k_0} -T_1 \leq 1$
and
 $e^{2|a_0|(t-T_1)} \leq e^{2|a_0|}$
for $t\leq T_{k_0}$. The definition of the $T_k$'s implies that
$$
\sup_{0\leq s\leq T_k} (|X_s-X'_s|+|U_s-U'_s|) \leq
 2
\lambda^{k-1} \eps.
$$
We obtain for $t\leq T_k$ with $k\leq k_0$,
\begin{equation}\label{eqf:3}
 |K_{t}-K'_{t}|
 \leq  e^{2|a_0|} (1+4|a_0|) \lambda^{k-1} \eps.
\end{equation}

We have
 \begin{equs}
 \Xi &:=
 (X_{T_k} - X'_{T_k})^T \Lambda(X_{T_k})
 (X_{T_k} - X'_{T_k})
 - (X_{T_{k-1}} - X'_{T_{k-1}})^T \Lambda(X_{T_{k-1}})
 (X_{T_{k-1}} - X'_{T_{k-1}})\\
 &=
 \int_{T_{k-1}}^{T_k}
 d\left((X_t - X'_t)^T \Lambda(X_t)
 (X_t - X'_t)\right)\\
 &\leq
 c_3 \int_{T_{k-1}}^{T_k}
 \Lambda(X_t) (X_t - X'_t)
 \cdot
 ((\sigma(X_t) -\sigma(X'_t)) \,dB_t \\
 &
 +\
 c_4 \int_{T_{k-1}}^{T_k}
 \Lambda(X_t) (X_t - X'_t)
 \cdot
 (\bfb (X_t) - \bfb (X'_t)) \,dt \\
 &
 +\
 c_5 \int_{T_{k-1}}^{T_k}
 \Lambda(X_t) (X_t - X'_t)
 \cdot
 (K_t - K'_t) \,dt \\
 &
  +\
 c_6 \int_{T_{k-1}}^{T_k}
 \Lambda(X_t) (X_t - X'_t)
 \cdot
 \cn(X_t) \,dL_t  \\
 &
  -\
 c_7 \int_{T_{k-1}}^{T_k}
 \Lambda(X_t) (X_t - X'_t)
 \cdot
 \cn(X'_t) \,dL'_t\\
 &
  +\
 c_8
 \left|
 \int_{T_{k-1}}^{T_k}
 |X_t - X'_t|^2
 (\sigma(X_t)\,dB_t + \bfb (X_t) \,dt + K_t \,dt +
 \cn(X_t) \,dL_t)
 \right|
 .
 \end{equs}
Rewrite the second to the last term as
$$
  - c_7 \int_{T_{k-1}}^{T_k}
 \Lambda(X'_t) (X_t - X'_t) \cdot
 \cn(X'_t) \,dL'_t -  c_7 \int_{T_{k-1}}^{T_k}
 (\Lambda(X_t) -\Lambda(X'_t)) (X_t - X'_t) \cdot
 \cn(X'_t) \,dL'_t .
$$

We apply (\ref{eqf:3}) to $\int_{T_{k-1}}^{T_k} \Lambda(X_t)
(X_t-X'_t)\cdot (K_t-K'_t)\, dt$ and use the Lipschitz property of
$\Lambda$ to obtain
 \begin{equs}
 \Xi \leq
 c_3 & \int_{T_{k-1}}^{T_k}
 \Lambda(X_t) (X_t - X'_t) \cdot
 (\sigma(X_t) -\sigma(X'_t)) \,dB_t \\
 &
 +\
 c_4 \int_{T_{k-1}}^{T_k}
 \Lambda(X_t) (X_t - X'_t) \cdot
 (\bfb (X_t) - \bfb (X'_t)) \,dt \\
 &
 +\
 c_9 \int_{T_{k-1}}^{T_k}
 |\Lambda(X_t)  (X_t - X'_t)|
 \lambda^{k-1} \eps
 \,dt  \\
 &  +\
 c_{10} \int_{T_{k-1}}^{T_k}
 |X_t - X'_t|^2 \,dL_t   \\
 &
  +\
 c_{11} \int_{T_{k-1}}^{T_k}
 |X_t - X'_t|^2 \,dL'_t
  +\
 c_{12} \int_{T_{k-1}}^{T_k}
  |X_t - X'_t|^2 \,dL'_t \\
 &
  +\
 c_8  \left | \int_{T_{k-1}}^{T_k}
 |X_t - X'_t|^2
 (\sigma(X_t)\,dB_t + \bfb (X_t) \,dt + K_t \,dt +
 \cn(X_t) \,dL_t) \right| .   \label{eqf:6}
 \end{equs}
 Here, we made use of \eref{eqf:1} to bound the two terms involving integrations with respect to  the
 local times $L_t$ and $L_t'$.

It follows from
 (\ref{eqn:c1})
that there exists $c_{13} >0$
such that if
\begin{equation}\label{eqf:15}
  (X_{T_k} - X'_{T_k})^T \Lambda(X_{T_k})
 (X_{T_k} - X'_{T_k})
 - (X_{T_{k-1}} - X'_{T_{k-1}})^T \Lambda(X_{T_{k-1}})
 (X_{T_{k-1}} - X'_{T_{k-1}}) \leq c_{13} \lambda^{2(k-1)} \eps^2
\end{equation}
and
\begin{equation}\label{eqf:16}
 |X_{T_{k-1}} - X'_{T_{k-1}}| \leq \lambda^{k-2} \eps,
\end{equation}
then
\begin{equation}\label{eqf:14}
 |X_{T_k} - X'_{T_k}|
\leq   \lambda^{-1/3} \, \lambda^{k-1} \eps < \lambda^{k-1} \eps .
\end{equation}

In view of our assumptions on $\Lambda$ and $\sigma$,
 \begin{equs}
\var\Bigl( c_3 \int_{T_{k-1}}^{T_k}
 &\Lambda(X_t) (X_t - X'_t) \cdot
 (\sigma(X_t) -\sigma(X'_t)) \,dB_t
\ \Big| \ \FF_{T_{k-1}} \Bigr) \\
 &\leq
\E \Bigl( c_{14} \int_{T_{k-1}}^{T_k}
 \sup_{T_{k-1}\leq s \leq T_k}
 |X_s - X'_s|^4 \,dt  \ \Big| \ \F_{T_{k-1}} \Bigr)\\
 &\leq c_{14} \lambda^{4(k-1)} \eps^4 t_1.
 \end{equs}
We make $t_1>0$ smaller (and therefore $k_0$ larger), if
necessary,
 so that by Doob's and Chebyshev's inequalities,
\begin{equation}\label{eqf:7}
 \P \left(
 \Big| c_3 \int_{T_{k-1}}^{T_k}
 \Lambda(X_t) (X_t - X'_t) \cdot
 (\sigma(X_t) -\sigma(X'_t)) \,dB_t\Big|
 \geq (1/10)c_{13} \lambda^{2(k-1)} \eps^2
 \ \Big| \  \FF_{T_{k-1}} \right) \leq
 1/8.
\end{equation}
We make $t_1>0$ smaller, if necessary, so that
\begin{eqnarray}
\left|  c_4 \int_{T_{k-1}}^{T_k}
 \Lambda(X_t) (X_t - X'_t) \cdot
 (\bfb (X_t) - \bfb (X'_t)) \,dt \right|
 &\leq &
 c_{15} \int_{T_{k-1}}^{T_k} |X_t - X'_t|^2 \,dt \label{eqf:8}\\
 &\leq & c_{15} \int_{T_{k-1}}^{T_k}
 \sup_{T_{k-1}\leq s \leq T_k}
 |X_s - X'_s|^2 \,dt \nonumber \\
 &\leq & c_{15} t_1 \lambda^{2(k-1)} \eps^2
 \leq (1/10)c_{13} \lambda^{2(k-1)} \eps^2. \nonumber
\end{eqnarray}
If $t_1>0$ is sufficiently small then
 \begin{eqnarray}
\left|  c_9 \int_{T_{k-1}}^{T_k}
 |\Lambda(X_t) (X_t - X'_t)|
 \lambda^{k-1} \eps \,dt \right|
 &\leq&
 c_{17} \int_{T_{k-1}}^{T_k}
 \sup_{T_{k-1}\leq s \leq T_k}
 |X_s - X'_s|
 \lambda^{k-1} \eps
 \,dt \label{eqf:11}  \\
 &\leq &
 (1/10)c_{13} \lambda^{2(k-1)} \eps^2. \nonumber
 \end{eqnarray}

We make $c_0>0$ in the definition of $T_k$ so small that
\begin{align}
 c_{10} \int_{T_{k-1}}^{T_k}
 |X_t - X'_t|^2 \,dL_t & +
 c_{11} \int_{T_{k-1}}^{T_k}
 |X_t - X'_t|^2 \,dL'_t +
 c_{12} \int_{T_{k-1}}^{T_k}
  |X_t - X'_t|^2 \,dL'_t\label{eqf:12}\\
 &\leq
 (1/10)c_{13} \lambda^{2(k-1)} \eps^2.\nonumber
\end{align}
Completely analogous estimates show that for sufficiently small
$t_1, c_0>0$,
\begin{equs}
 \P \Bigg(
 \Big| c_8 \int_{T_{k-1}}^{T_k}
 |X_t - X'_t|^2
& (\sigma(X_t)\,dB_t + \bfb (X_t) \,dt + K_t \,dt
 + \cn(X_t) \,dL_t )
 \Big|
 \\ &
 \geq (1/10)c_{13} \lambda^{2(k-1)} \eps^2
 \ \Big|\ \FF_{T_{k-1}} \Bigg) \leq 1/8.\label{eqf:13}
\end{equs}

Combining
(\ref{eqf:6}) with (\ref{eqf:7})-(\ref{eqf:13}),
  we see that conditioned on
$\FF_{T_{k-1}}$,  with probability 3/4 or greater the following
event holds:
 $$
  (X_{T_k} - X'_{T_k})^T \Lambda(X_{T_k})
 (X_{T_k} - X'_{T_k})
 - (X_{T_{k-1}} - X'_{T_{k-1}})^T \Lambda(X_{T_{k-1}})
 (X_{T_{k-1}} - X'_{T_{k-1}}) \leq c_{13} \lambda^{2(k-1)} \eps^2.
 $$
In view
of
 (\ref{eqf:15})-(\ref{eqf:14}), this implies that if
(\ref{eqf:16}) holds, then
\begin{equation}\label{eqf:9n}
|X_{T_k} - X'_{T_k}| \leq
 \lambda^{k-1} \eps.
\end{equation}

Let
$$
 T'_k = (T_{k-1} + t_1) \land
 \inf\{t>T_{k-1}: |X_t - X'_t| \geq
 \lambda^{k-1} \eps\}.
$$
By Doob's inequality, if $t_1>0$ is sufficiently small,
\begin{equation}\label{eqf:9}
 \P \left(
 \sup_{T_{k-1}\leq s \leq T'_k}
 |U_s - U'_s| \geq (1/2) \lambda^{k-1} \eps
 \ \Big| \ \FF_{T_{k-1}} \right) \leq 1/4.
\end{equation}

Let
 $$
F_k =\{ T_k=T_{k-1} +t_1\}\cup \{ |L_{T_k} - L_{T_{k-1}}| \lor
|L'_{T_k} - L'_{T_{k-1}}|> c_0\}.
 $$
By (\ref{eqf:9n}) and (\ref{eqf:9}),
 \begin{equation}\label{eqf:17}
 \P( F_k \mid \FF_{T_{k-1}} ) \geq 1/2 .
\end{equation}

Recall the definition of $k_0$ relative to $p_1\in (0, 1)$.
Repeated application of the strong Markov property at the
stopping times $T_k$ and comparison with a Bernoulli sequence
prove
 that at least $k_0/4$ of the events $F_k$ will occur with
probability $p_1$ or greater. This implies that with
probability $p_1$ or greater,
 $$ \hbox{either} \quad  T_{k_0} -T_1 \geq 1/4 \quad \hbox{or} \quad
 L_{T_{k_0}} - L_{T_1} \geq c_0/(4t_1) \quad \hbox{or} \quad
 L'_{T_{k_0}} - L'_{T_1} \geq c_0/(4t_1). $$
From the definitions of
$T_1$
 and $S$,
 $$\P \left( \sup_{t\in [0, S]}|X_t - X'_t| \leq \eps \lambda^{k_0} \right)\geq p_1.
 $$
 As $\eps>0$ is arbitrary, we have
 $$  \P \left(  X_t = X'_t \hbox{ for every } t\in [0, S] \right)\geq
 p_1>0,
 $$
which contradicts
 the  definition of
$T_1$.
 This completes the proof of pathwise uniqueness.

\medskip

Strong existence follows from weak existence and pathwise
uniqueness using a standard argument; see \cite[Section
IX.1]{RY}.
\end{proof}

  \section{Diffusions with gradient drifts}
\label{sect:potential}

This section is devoted to analysis of a diffusion with inert
drift given as the gradient of a potential. We will use such
diffusions to approximate diffusions with reflection. The analysis
of the stationary measure is easier in the case when the drift is
smooth. Let $V$ be a $C^1$ function on $D$ that goes to $+\infty$
sufficiently fast as $x$ approaches the boundary of $D$. We will
consider the diffusion process on $D$ associated with generator
${\cal L}=\frac 12 e^{V} \nabla (e^{-V}A\nabla)$ but with an
additional  ``inert" drift. More precisely, let $\Gamma$ be a
non-degenerate constant $d\times d$-matrix. We consider the SDE
\begin{equs}\label{e:diffV1L}
\begin{cases}
dX_t &= \sigma (X_t) \,dB_t +{\bf b}(X_t) \,dt -\frac12
(A\nabla V)(X_t)\, \,dt + K_t\, \,dt \;, \\
dK_t &= -\frac12 \Gamma \, \nabla V (X_t)\, dt\;,
 \end{cases}
\end{equs}
where $A=A(x) = (a_{ij}) = \sigma^T\sigma$ is uniformly elliptic
and bounded. We assume that $\sigma \in C^1 (\overline D)$ and
${\bf b}=(b_1, \cdots, b_d)$ with $b_k(x):=\frac12 e^{V(x)}
\sum_{i=1}^d \partial_i \left( e^{-V(x)} a_{ik}(x)\right)$. Note
that, since $\sigma \in C^1(\overline D)$, $V\in C^1(D)$ and ${\bf
b}$ is bounded, \eqref{e:diffV1L} has a unique weak solution $(X,
K)$ up to the time $\inf\{t>0: X_t\notin D \hbox{ or }
K_t=\infty\}$.

To find a candidate for the stationary distribution for $(X,
K)$, we will do some computations with processes of the form
$f(X_t, K_t)$, where $f\in C^2(\RR^{2d})$. We will use the
following notation,
\begin{eqnarray*}
 \nabla_x f(x,y) &=& \left(\frac{\partial}{\partial x_1}
 f(x_1, \dots, x_d, y_1, \dots,
y_d), \dots, \frac{\partial}{\partial x_d} f(x_1, \dots, x_d, y_1,
\dots, y_d)\right)^T, \\
 \nabla_y f(x,y) &=&  \left(\frac{\partial}{\partial y_1}
 f(x_1, \dots, x_d, y_1, \dots,
y_d), \dots, \frac{\partial} {\partial y_d} f(x_1, \dots, x_d,
y_1, \dots,
y_d)\right)^T, \\
\LL_x f(x,y) &=& \frac12 \, e^{V(x)}\, \sum_{i,j=1}^d
\frac{\partial}{\partial x_i} \left(e^{-V(x)} a_{ij} (x)
 \frac{\partial}{\partial x_j} f (x,y) \right), \\
\LL^*_x f(x,y) &=& \frac12 \,  \sum_{i,j=1}^d
\frac{\partial}{\partial x_i} \left(e^{-V(x)} a_{ij} (x)
\frac{\partial}{\partial x_j} (e^{V(x)} f (x,y) \right).
\end{eqnarray*}
For $f\in C^2(\RR^{2d})$, by Ito's formula, we have
\begin{eqnarray*}
df(X_t, K_t) &=& \nabla_x f dX_t+ \nabla_y f \,dK_t+\frac12
\sum_{i,j=1}^d \partial_i \partial_j f d\<X^i, X^j\>_t\\
&=& \hbox{local martingale}+ \left( \LL_x f   + \nabla_x f\cdot
K_t -\frac12 \nabla_y f \cdot \Gamma \nabla_x V \right) \,dt .
\end{eqnarray*}
So the process $(X, K)$ has the generator
\begin{equation}\label{generator}
 \GG f(x, y):=\LL_x f(x, y)+y\cdot
\nabla_x f(x, y) -\frac12
  \Gamma \, \nabla_x V(x)\cdot \nabla_y f(x, y)
\end{equation}
 for $ x\in D$   and   $y\in \RR^d$.

\medskip

We will now assume that $(X,K)$ has a stationary measure of a
special form. Then we will do some calculations to find an
explicit formula for the stationary measure, and finally we will
show that the calculations can be traced back to complete the
proof that the measure we started with is indeed the stationary
distribution.

Suppose that $(X, K)$ has a stationary distribution $\pi$ of the
form $\pi (dx, dy)=\rho_1(x) \rho_2(y)\,dx\, dy$. It follows that
${\cal G}^* \pi =0$, in the sense that for every $f\in {\rm
Dom}({\cal G})$,
$$ \int_{\RR^{2d}} {\cal G} f(x, y) \pi (dx, dy) =0.$$
Then we have for every $f\in C^2_c(D\times \RR^d)$, by the
integration by parts formula,
\begin{eqnarray*}
0 &=&\int_{\RR^d} \left( \int_D \left( \rho_1(x) y\cdot \nabla_x f(x,
y) + \rho_1(x) \LL_x f(x, y) \right) \,dx \right) \rho_2(y) \,dy\\
 &&
 -\frac12 \int_{D} \left( \int_{\RR^d} \rho_2(y) \Gamma  \, \nabla_x V(x)
\cdot
  \nabla_yf(x, y)   \,dy \right) \rho_1(x) \,dx \\
&=& \int_{\RR^d} \left( \int_D \left(-\nabla_x \rho_1(x) \cdot y f(x, y)
 +
  \LL_x^* \rho_1 (x) f(x, y)\right) \,dx \right) \rho_2(y) \,dy \\
&& + \frac 12
 \int_{D} \left( \int_{\RR^d}  \Gamma \, \nabla_x V(x) \cdot
\nabla_y \rho_2 (y) f(x, y) \,dy
      \right) \rho_1(x) \,dx \;.
\end{eqnarray*}
This implies that
\begin{equation}\label{eqn:2}
\rho_2(y)\left( \nabla_x \rho_1 (x) \cdot y
  - \LL_x^*
 \rho_1(x)\right) -\frac12
 \rho_1(x)  \Gamma \, \nabla_x V(x) \cdot
\nabla_y \rho_2 (y) =0 \qquad  \hbox{for } (x, y)\in D \times
\RR^d .
\end{equation}
We now make an extra assumption that $\rho_1(x)= ce^{-V(x)}$.
Then we have
 $$ \rho_2 (y) \nabla_x V(x) \cdot y + \frac12
 \Gamma \, \nabla_x V(x) \cdot \nabla_y \rho_2 (y)=0
 \qquad \hbox{for } (x, y)\in D\times \RR^d.
 $$

Since $V(x)$ blows up as $x$ approaches the boundary $\partial D$,
$\{\nabla_x V (x), x\in D\}$ spans the whole of $\RR^d$. (If not,
there exists $v \in \RR^d$ such that $\langle v, \nabla V(x)
\rangle \le 0$ for every $x \in \RR^d$. One then gets a
contradiction to the fact that there exists $r_0 > 0$ (possibly
$r_0 = +\infty$) such that $\lim_{r \to r_0} V(x + vr) = \infty$
for every $x \in D$.) So we must have
$$ y + \frac12
 \Gamma ^T \nabla_y \log \rho_2 (y) = 0  \qquad \hbox{for
every } y\in \RR^d.
$$
This implies that $\Gamma$ is symmetric and that
$$ \nabla_y \log \rho_2 (y) =
-2 \Gamma^{-1} y  \qquad \hbox{for every } y\in \RR^d.
$$
Hence we have
$$ \log \rho_2 (y) = -
(\Gamma^{-1}  y, y) +c_1,
$$
or
$$ \rho_2 (y)= c_2 \exp \Big( -
 (\Gamma^{-1}  y, y) \Big).
$$

The above calculations suggests that when $\Gamma$ is a symmetric
positive definite matrix, the stationary distribution for the
process $(X, K)$ in (\ref{eqn:1}) has the form
$$ c_3 \, {\bf 1}_D(x) \,  \exp \Big( -V(x) -
(\Gamma^{-1}  y, y) \Big)
\, dx\, \,dy,
$$
where $c_3>0$ is the normalizing constant. This is made rigorous
in the following result. Recall that a process is said to be
conservative if it does not explode in finite time. In the present setting, since
we consider processes taking values on $D\times \RR^d$, the is means that the
second component does not explode and that the first component does not reach the boundary of $D$.

\begin{thm}\label{T:1} Suppose that $\Gamma$ is a symmetric positive
definite matrix, each $\sigma_{ij}$ is $C^1$ on $\overline D$ so
that $A=\sigma^T \sigma$ is uniformly elliptic and bounded on $D$,
and $V$ is a $C^1$ potential on $D$. Suppose that
$$
\pi (dx, dy):=c_0 \1_D(x) \, \exp \Big( -V(x)
  - (\Gamma^{-1}  y, y) \Big) \, dx\, dy  $$
is a probability measure on $D\times \RR^d$ such that the
diffusion process of \eqref{e:diffV1L} with initial distribution
$\pi$ is conservative. Then the process of \eqref{e:diffV1L} has $\pi$ as a
(possibly not unique) stationary distribution.
\end{thm}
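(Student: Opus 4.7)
The strategy is to verify that $\pi$ annihilates the generator $\GG$ of $(X,K)$ on a suitable class of test functions, and then to invoke a standard martingale-problem criterion to upgrade this to stationarity. Recall from the discussion immediately preceding the theorem that Ito's formula gives
\[
\GG f(x,y) = \LL_x f(x,y) + y\cdot \nabla_x f(x,y) - \tfrac{1}{2}\, \Gamma \nabla_x V(x) \cdot \nabla_y f(x,y).
\]

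The main computation is to show that $\int_{D\times\RR^d} \GG f\, d\pi = 0$ for every $f\in C_c^2(D\times \RR^d)$, which is done by integration by parts, term by term. The $\LL_x$ contribution vanishes once one notes that the formal adjoint is $\LL_x^* g = \tfrac{1}{2}\sum_{i,j}\partial_i(e^{-V}a_{ij}\partial_j(e^V g))$, whence $\LL_x^*(e^{-V}) = 0$ (since $e^V \cdot e^{-V}\equiv 1$); the compact support of $f$ in $D$ kills the $\partial D$-boundary terms when derivatives are transferred. Integration by parts in $x$ in the $y\cdot \nabla_x f$ term, using $\nabla_x e^{-V} = -e^{-V}\nabla V$, produces $c_0 \int f\,(y\cdot \nabla V(x))\,e^{-V(x)-(\Gamma^{-1}y,y)}\,dx\,dy$. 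Integration by parts in $y$ in the third term, using $\nabla_y e^{-(\Gamma^{-1}y,y)} = -2\Gamma^{-1}y\, e^{-(\Gamma^{-1}y,y)}$ (Gaussian decay at $|y|\to\infty$ kills the boundary contributions) together with the symmetry of $\Gamma$ (which turns $\Gamma \nabla V \cdot \Gamma^{-1}y$ into $\nabla V \cdot y$), produces exactly the negative of the previous integral. These two surviving terms cancel, and $\int \GG f\, d\pi = 0$ follows. This computation is essentially the reverse of the formal calculation preceding the theorem that motivated the form of $\pi$ in the first place.

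To pass from this identity to stationarity, the weak well-posedness of \eqref{e:diffV1L} noted before the theorem, combined with the conservativity hypothesis, implies that $(X,K)$ under $\P_\pi$ is the unique non-exploding weak solution to the associated martingale problem on $D\times \RR^d$, and $C_c^2(D\times\RR^d)$ is a core for its generator. The Echeverria--Bhatt--Karandikar criterion (see, e.g., Ethier--Kurtz, \emph{Markov Processes: Characterization and Convergence}, Chapter 4) then yields that any probability measure annihilating $\GG$ on this core is invariant for the associated Markov semigroup. The main obstacle is to justify the Echeverria-type framework in this non-compact, degenerate setting, where $V$ blows up at $\partial D$ and $|y|$ is unbounded; but the conservativity hypothesis is tailor-made for this purpose, ruling out explosion at either boundary and allowing a clean approximation argument by functions in $C_c^2(D\times \RR^d)$. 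The integration by parts itself is entirely routine because the compact support of $f$ in $D$ localizes everything away from $\partial D$, and the Gaussian weight in $y$ provides all the integrability one needs.
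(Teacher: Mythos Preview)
Your proposal is correct and follows essentially the same route as the paper: verify $\int \GG f\,d\pi=0$ for $f\in C_c^2(D\times\RR^d)$ via the integration-by-parts computation already carried out before the theorem, then invoke Ethier--Kurtz (the paper cites Theorem~4.9.17) to produce a stationary solution of the martingale problem, and finally use weak well-posedness together with the conservativity hypothesis to identify that solution with the given SDE. The only place where the paper is more explicit than your sketch is the identification step: rather than asserting that $C_c^2$ is a core, the paper takes the abstract stationary martingale-problem solution on $E_\Delta$, applies It\^o's formula to coordinate functions $x_i$ and $y_i$ to extract that it actually solves \eqref{e:diffV1L} up to its lifetime $\zeta$, and then uses weak uniqueness plus conservativity to force $\zeta=\infty$; your invocation of ``core'' is unnecessary (and unproven), but the rest of your outline matches.
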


\pf Let $\GG$ be the operator defined by \eqref{generator} with domain
$\DD (\GG)=C^2_c(D\times \R^d)$. The above calculation shows that
\begin{equation}\label{eqn:G}
 \int_{D\times \RR^d} \GG f(x, y) \pi (dx, dy)=0 \qquad \hbox{for every }
 f\in \DD (\GG).
 \end{equation}

Let $E:=D\times \R^d$ and $E_\Delta = E\cup \{\Delta\}$ be the
one-point compactification of $E$. Recall $\GG$ is defined by
\eqref{generator} with  $\DD (\GG)=C^2_c(E)$. Since \eqref{eqn:G}
holds, we have by Theorem 9.17 in Chapter 4 of \cite{EK} that
$\pi$ is a stationary measure for some solution $\P$ to the
martingale problem for $(\GG, \DD (\GG))$. In \cite[Theorem
4.9.17]{EK}, the measure $\P$ is a probability measure on the
product space $E^{\RR_+}$. However by \cite[Corollary 4.3.7]{EK},
any solution of the martingale problem for $(\GG, \DD (\GG))$ has
a modification with sample paths in the Skorokhod space $\bD
(\RR_+, E_\Delta)$ of right continuous paths on $\E_\Delta$ having
left limits. Thus we can assume that $\P$ is a $\bD (\RR_+,
E_\Delta )$-solution to the martingale problem $(\GG, \DD(\GG))$
with stationary distribution $\pi$. Let $(X,   K)$ denote the
coordinate maps on
 $E_\Delta$ and set
 $$\zeta:=\inf\{t>0: (X_t, K_t)=\Delta\}.
 $$
We show next that $\{(X_t, K_t), t<\zeta\}$ under $\P$ is a
solution to SDE  \eqref{e:diffV1L} up to
time $\zeta$.

Let $f(x, y)= g(x)h(y)$ with $g\in C_c^2(D)$ and $h\in
C_c^2(\R^d)$. Then under $\P$,
\begin{align}
 g(X_t)h(K_t)&= g(X_0)
 h(K_0)  + \hbox{martingale} \label{eqn:gh} \\
& + \int_0^t \Big( h(K_s) \LL_x f(X_s) + h(K_s) K_s \cdot \nabla_x
g(X_s)\nonumber\\ &\qquad - \frac 12 g(X_s) \Gamma \nabla_x V(X_s) \cdot \nabla_y
h(K_s) \Big) \,ds. \nonumber
\end{align}
Here and below, we use the convention that for any function $f$ on
$E$, $f(\Delta):=0$. Since $\pi$ is the stationary distribution
for $(X, K)$, by the dominated convergence theorem, the above
holds for $g\in C^2(\overline D)$ and   $h\in C^2(\RR^d)$ that
have  bounded first derivatives. So in particular, we have for
$g\in C^2(\overline D)$, under $\P$
$$
 g(X_t)= g(X_0)  + \hbox{martingale} + \int_0^t \left( \LL_x g (X_s)
+  K_s \cdot \nabla_x g(X_s)  \right) \,ds \qquad \hbox{for } t\geq
0.
$$
Let $g(x)=x_i$. There are local martingales $M=(M^1, \cdots, M^d)$ such that
$$
dX_t= dM_t + {\bf b} (X_t) \,dt -\frac12 (A\nabla V )(X_t) \,dt + K_t
\,dt, \qquad t<\zeta.
$$
Applying Ito's formula to $X^i_t X^j_t$ yields $\<M^i, M^j\>_t=\<
X^i, X^j\>_t = \int_0^t a_{ij}(X_s) \,ds$ for $t<\zeta$. Define
$dB_t:= \sigma^{-1} (X_t)\, dM_t$. Then $\{B_t, t<\zeta\}$ is a
Brownian motion up to time $\zeta$ and $M_t=\int_0^t \sigma (X_s)
\,dB_s$ for $t<\zeta$. Hence we have
\begin{equation}\label{eqn:G2}
dX_t= \sigma (X_t) \,dB_t + {\bf b} (X_t) \,dt -\frac12 (A\nabla V
)(X_t) \,dt + K_t \,dt, \qquad t<\zeta.
 \end{equation}
From \eqref{eqn:gh}, we have for every $h\in C^2 (\RR^d)$ that has
bounded derivatives,
$$   h(K_t)=
h(K_0) + \hbox{local martingale}  -\frac12  \int_0^t \Gamma
\nabla_x V(X_s) \cdot \nabla_y h(K_s)   \,ds, \qquad t<\zeta.
$$
In particular, taking $h(y)=y_i$, $1\leq i\leq d$, there are local
martingales $N=(N^1, \cdots, N^d)$ such that
$$ dK_t = dN_t -\frac12 \Gamma \nabla_x V(X_s) \,ds, \qquad t<\zeta.
$$
Applying Ito's formula to $K^iK^j$ yields $\<N^i, N^j\>_t=\<K^i,
K^j\>_t=0$ for $t<\zeta$. Hence
$$ dK_t =  -\frac12 \Gamma \nabla_x V(X_s) \,ds, \qquad t<\zeta.
$$
This together with \eqref{eqn:G} implies that $\{(X_t, K_t),
t<\zeta\}$ under $\P$ is a (continuous) weak solution to
(\ref{e:diffV1L}) with
  initial distribution $\pi$
up to time $\zeta$. Since $a_{ij}, V\in C^1(\overline D)$, weak
uniqueness holds for solutions of \eqref{e:diffV1L} (see \cite{SV}).
So under our conservativeness assumption, \eqref{e:diffV1L} has a
conservative weak solution with initial distribution $\pi$ which is
unique in distribution. By standard techniques (cf.~the proof of
\cite[Proposition I.2.1]{Bass97}), any weak solution to
\eqref{e:diffV1L} gives rise to a solution to the martingale problem
for $(\GG, \DD (\GG ))$. This implies that $\zeta =\infty$ because
$(X,K)$ with the initial distribution $\pi$ is a conservative
solution to \eqref{e:diffV1L}, by assumption. We conclude that $\pi$
is a stationary distribution for \eqref{e:diffV1L}.
 \qed

\bigskip

We will present an easily verifiable condition on $V$ for
Theorem \ref{T:1} to be applicable. The following preliminary
result for diffusions without inert drift may have interest of
its own.

Recall that $W^{1,2}_0(D)$ is the closure of the space
$C^\infty_c(D)$ of smooth functions with compact support in $D$
under the Sobolev norm $\| u\|_{1,2} := \left(\int_D (|u(x)| +
\sum_{i=1}^d |\partial_i u(x)|^2 )\,dx\right)^{1/2}$.

\begin{theorem}\label{T:4.2} Let $D\subset \R^d$ be a domain
(i.e. a connected open set) and $A(x)=(a_{ij}(x))$ be a
measurable $d\times d$ matrix-valued function on $D$  that is
uniformly elliptic and bounded. Suppose that  $\varphi$ is a
function in $W^{1,2}_0(D)$ that is positive on $D$. Then the
minimal diffusion $X$ on $D$ having infinitesimal generator ${\cal
L} = \frac1{ 2\varphi^2}  \sum_{i, j=1}^d
\partial_i (\varphi^2 a_{ij} \partial_j)$ is conservative.
\end{theorem}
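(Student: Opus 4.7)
My plan is to recognize the minimal diffusion $X$ as the Hunt process associated with the regular symmetric Dirichlet form $(\mathcal{E},\mathcal{F})$ on $L^2(D,\varphi^2\,dx)$ given by
\[
 \mathcal{E}(u,v)=\frac12\int_D \sum_{i,j=1}^d a_{ij}(x)\,\partial_i u(x)\,\partial_j v(x)\,\varphi(x)^2\,dx,
\]
with domain $\mathcal{F}$ obtained as the closure of $C_c^\infty(D)$ under $\mathcal{E}_1^{1/2}$, where $\mathcal{E}_1(u,u):=\mathcal{E}(u,u)+\int_D u^2\varphi^2\,dx$. That the minimal diffusion with generator $\mathcal{L}=\frac{1}{2\varphi^2}\sum \partial_i(\varphi^2 a_{ij}\partial_j)$ is precisely this Hunt process is standard: $\mathcal{L}$ is symmetric on $L^2(D,\varphi^2\,dx)$ and its pre-Dirichlet form on $C_c^\infty(D)$ coincides with $\mathcal{E}$.

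Once this identification is made, the conservativeness criterion of Fukushima--Oshima--Takeda reduces the problem to exhibiting a sequence $u_n\in\mathcal{F}$ with $0\le u_n\le 1$, $u_n\to 1$ almost everywhere on $D$, and $\mathcal{E}(u_n,u_n)\to 0$. The candidate I will use is
\[
 u_n:=(n\varphi)\wedge 1,
\]
which is the key place where the hypotheses $\varphi\in W^{1,2}_0(D)$ and $\varphi>0$ on $D$ are used. Since $\varphi(x)>0$ for every $x\in D$, $u_n\uparrow 1$ pointwise on $D$. On $\{\varphi<1/n\}$ we have $\nabla u_n=n\nabla\varphi$ and $\varphi^2<1/n^2$, and $\nabla u_n=0$ elsewhere. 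Using that $A$ is bounded by some constant $\Lambda$,
\[
 \mathcal{E}(u_n,u_n)\le \tfrac{\Lambda n^2}{2}\int_{\{\varphi<1/n\}}|\nabla\varphi|^2\,\varphi^2\,dx\le \tfrac{\Lambda}{2}\int_{\{\varphi<1/n\}}|\nabla\varphi|^2\,dx.
\]
Because $\{\varphi<1/n\}$ decreases to the empty set and $|\nabla\varphi|^2\in L^1(D)$, dominated convergence yields $\mathcal{E}(u_n,u_n)\to 0$.

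The main technical point will be verifying that $u_n$ actually belongs to the form domain $\mathcal{F}$. The idea is to fix a sequence $\varphi_k\in C_c^\infty(D)$ with $\varphi_k\to\varphi$ in the $W^{1,2}(D)$ norm (such a sequence exists since $\varphi\in W^{1,2}_0(D)$), set $u_n^{(k)}:=(n\varphi_k)_+\wedge 1$, and note that each $u_n^{(k)}$ is a bounded Lipschitz function with compact support in $D$, hence can be mollified to produce approximations in $C_c^\infty(D)$. A routine truncation estimate, combined with $\|u_n^{(k)}\|_\infty\le 1$ and the boundedness of $A$, shows $u_n^{(k)}\to u_n$ in $\mathcal{E}_1$, putting $u_n$ in $\mathcal{F}$. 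This is the step where one must be careful with the $\varphi^2$-weighting, but because $u_n$ is uniformly bounded and the gradient is controlled by $|\nabla\varphi|$ (or $|\nabla\varphi_k|$), all integrals collapse to unweighted $W^{1,2}$ bounds on $\varphi-\varphi_k$. With $u_n\in\mathcal{F}$ established, the criterion applies and conservativeness follows.
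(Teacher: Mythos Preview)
Your approach is genuinely different from the paper's. The paper does not use the Fukushima--Oshima--Takeda recurrence criterion at all; instead it introduces the auxiliary diffusion $X^0$ with Dirichlet form $(\mathcal{E}^0, W^{1,2}_0(D))$ on $L^2(D,dx)$, observes that $X$ is obtained from $X^0$ by the Girsanov transform generated by $dZ_t=\varphi(X_t^0)^{-1}\,dM_t^\varphi$ (with $M^\varphi$ the martingale part of the Fukushima decomposition of $\varphi(X^0_t)-\varphi(X^0_0)$), and then invokes Theorem~2.6(ii) of Chen--Fitzsimmons--Takeda--Ying--Zhang to conclude conservativeness directly. Your route via $u_n=(n\varphi)\wedge1$ is more elementary and in fact yields recurrence, and the computation $\mathcal{E}(u_n,u_n)\le\frac{\Lambda}{2}\int_{\{\varphi<1/n\}}|\nabla\varphi|^2\,dx\to0$ is clean and correct.

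There is, however, a real gap in the step where you claim $u_n\in\mathcal{F}$ ``collapses to unweighted $W^{1,2}$ bounds on $\varphi-\varphi_k$.'' The problem is that $\nabla u_n^{(k)}=n\nabla\varphi_k\,\mathbf{1}_{\{0<\varphi_k<1/n\}}$ is supported where $\varphi_k$ is small, not where $\varphi$ is small, and the weight in $\mathcal{E}_1$ is $\varphi^2$. On the set $\{\varphi_k<1/n\}\cap\{\varphi\ge2/n\}$ the weight $\varphi^2$ can be arbitrarily large, and one is left controlling an integral of the type $n^2\int|\nabla\varphi_k|^2\,|\varphi-\varphi_k|^2\,dx$, which does not reduce to $\|\varphi-\varphi_k\|_{W^{1,2}}$ without an $L^4$-type bound that the hypotheses do not provide. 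The theorem as stated allows $\varphi$ to be unbounded on compact subsets of $D$ (any positive $W^{1,2}_0$ function with an interior logarithmic singularity is admissible), so this is not a vacuous concern. Your argument is complete when $\varphi$ is locally bounded, which does cover the paper's main application $\varphi=e^{-V/2}$ with $V$ bounded below; in full generality one seems forced either to characterize $\mathcal{F}$ via the ground-state transform $u\mapsto u\varphi$ (which is essentially what the cited result of Chen et al.\ provides) or to find a substantially more delicate approximation.
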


\proof The process $X$ is the symmetric diffusion (with respect to
the symmetrizing measure $\varphi (x)^2 \,dx$) associated with the
Dirichet form $(\EE, \FF)$ in $L^2(D, \varphi (x)^2 \,dx)$, where
$$ \EE (u, v) = \frac12 \int_D \sum_{i,j=1}^d a_{ij}(x) \partial_i u(x) \partial_j v(x)
\, \varphi (x)^2 \,dx ,
$$
and $\FF$ is the closure of $C_c^\infty (D)$ with respect to the
norm $\sqrt{\EE_1}$; here we define
$$\EE_1 (u, u):=\EE (u, u) +
\int_D u(x)^2 \varphi (x)^2 \,dx. $$ Let $X^0$ be the symmetric
diffusion in $D$ with respect to Lebesgue measure on $D$
associated with the Dirichlet form $(\EE^0, W^{1,2}_0(D))$ in
$L^2(D, dx)$, where
$$ \EE^0 (u, v) = \frac12 \int_D \sum_{i,j=1}^d a_{ij}(x)
\partial_i u(x) \partial_j v(x)
\,   dx .
$$
Then by \cite[Theorems 2.6 and 2.8]{CFTYZ}, $X$ can be obtained
from $X^0$ through a Girsanov transform using the  martingale
$dZ_t= \varphi (X_t^0)^{-1} dM^{\varphi}_t$, where $M^\varphi$  is
the martingale part of the Fukushima decomposition for $\varphi
(X_t^0)-\varphi (X_0^0)$. We now conclude from \cite[Theorem
2.6(ii)]{CFTYZ} that $X$ is conservative. \qed

\bigskip

\begin{theorem}\label{T:4.3}
Suppose that $\Gamma$ is a constant symmetric positive definite
matrix. Let $D\subset \RR^d$ be a bounded domain, $\sigma =
\sigma(x)$ be a $d\times d$ matrix that is $C^1$ on $\overline D$
so that $A=\sigma^T \sigma$ is uniformly elliptic and bounded on
$D$. Suppose that $V$  is a $C^1$ function in $D$ such that
$\varphi=e^{-V/2}\in  W^{1,2}_0 (D)$. Then  the (minimal) solution
of \eqref{e:diffV1L} with initial distribution $\pi$ is
conservative and has $\pi$ as its stationary distribution.
\end{theorem}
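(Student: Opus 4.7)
The plan is to establish that the minimal solution $(X,K)$ of \eqref{e:diffV1L} with initial distribution $\pi$ does not explode in finite time; once conservativeness is proved, Theorem \ref{T:1} applies and gives that $\pi$ is stationary. First I would observe that $\pi$ is a probability measure: $\int_D e^{-V}\,dx = \int_D \varphi^2\,dx < \infty$ because $\varphi := e^{-V/2} \in W^{1,2}_0(D) \subset L^2(D)$, and the Gaussian factor in $y$ is integrable.

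Apply Theorem \ref{T:4.2} to $\varphi$: the symmetric diffusion $X^0$ on $D$ with generator $\LL_x = \frac{1}{2\varphi^2}\nabla(\varphi^2 A\nabla)$ is conservative, with stationary measure $\mu(dx) = c_1 e^{-V(x)}\,dx$. Since $\nabla\varphi = -\tfrac12(\nabla V)e^{-V/2}$, one has the key integrability
\[
\int_D |\nabla V|^2 e^{-V}\,dx \;=\; 4\,\|\nabla\varphi\|_{L^2(D)}^2 \;<\; \infty,
\]
so that $\int_0^t |\nabla V(X^0_s)|\,ds < \infty$ a.s.\ when $X^0$ starts from $\mu$. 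Taking $K_0 \sim \nu(dy) := c_2 e^{-(\Gamma^{-1} y, y)}\,dy$ independent of $X^0$, the process $K^0_t := K_0 - \frac12\int_0^t \Gamma\nabla V(X^0_s)\,ds$ is then a.s.\ finite for all $t$. A Girsanov transform as in the proof of Theorem \ref{T:2.1}, stopped at $T_n := \inf\{t:|K^0_t|\geq n\}$, gives under a new measure a weak solution of \eqref{e:diffV1L} up to $T_n$ starting from $\pi$, and letting $n\to\infty$ produces the minimal solution $(X,K)$ with explosion time $\zeta := \lim_n T_n$.

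To show $\zeta = \infty$ a.s.\ I would reuse the Ethier--Kurtz/absorption argument from the proof of Theorem \ref{T:1}. The identity $\int \GG f\,d\pi = 0$ for $f \in C^2_c(D\times\RR^d)$ (verified in the calculation preceding Theorem \ref{T:1}) together with \cite[Theorem 4.9.17]{EK} and \cite[Corollary 4.3.7]{EK} yields a martingale-problem solution $\P$ on $\bD(\RR_+, E_\Delta)$ whose marginal at every time is $\pi$. Because $\Delta$ is absorbing and $\pi$ puts no mass on $\Delta$, stationarity forces $\P(\zeta \le t) = \P((X_t,K_t) = \Delta) = 0$ for every $t>0$, so this martingale-problem solution is conservative. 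The argument in the proof of Theorem \ref{T:1} shows $\P$ is a weak solution of \eqref{e:diffV1L}, and weak uniqueness up to explosion (from \cite{SV}, valid inside $D$ where the coefficients are locally smooth) identifies $\P$ with the minimal solution $(X,K)$ built above, giving $\zeta=\infty$ a.s. Theorem \ref{T:1} then yields that $\pi$ is stationary.

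The main obstacle I anticipate is bridging the two descriptions of $(X,K)$: the explicit Girsanov construction from $(X^0,K^0)$ (which controls $K$ via the $W^{1,2}_0$ integrability) and the abstract Ethier--Kurtz construction (which delivers conservativeness essentially for free from stationarity and absorption). The identification relies on weak uniqueness for \eqref{e:diffV1L} up to explosion, carried out inside an exhausting family of subsets $D_n \times B_n \subset D\times\RR^d$ on which the SDE coefficients and $|K|$ are bounded; with these pieces in place, the proof assembles cleanly.
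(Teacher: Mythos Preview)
Your overall architecture is right---build the minimal solution via Girsanov from the conservative symmetric diffusion of Theorem~\ref{T:4.2}, then invoke Theorem~\ref{T:1} once conservativeness is in hand---and the paper follows exactly this outline. The difference, and the gap in your proposal, lies in how conservativeness is established.

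You claim that the Ethier--Kurtz stationary solution $\P$ on $\bD(\RR_+,E_\Delta)$ is conservative because ``$\Delta$ is absorbing and $\pi$ puts no mass on $\Delta$, stationarity forces $\P(\zeta\le t)=\P((X_t,K_t)=\Delta)=0$.'' But \cite[Theorem~4.9.17]{EK} gives existence of \emph{some} stationary solution of the martingale problem for $(\GG,C^2_c(E))$; it does not say that $\Delta$ is a trap for that solution. Knowing only that the one-dimensional marginals equal $\pi$ for every fixed $t$ does not rule out that the process reaches $\Delta$ at a random time and then jumps back into $E$---this is perfectly compatible with c\`adl\`ag paths in $E_\Delta$ and with the martingale problem (since $f(\Delta)=\GG f(\Delta)=0$ for $f\in C^2_c(E)$). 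Your subsequent appeal to weak uniqueness does not close this: local weak uniqueness identifies the two laws only up to the first exit from compacts, giving $\tau_\infty^{\mathrm{EK}}\stackrel{d}{=}\zeta^{\min}$, but you still need $\tau_\infty^{\mathrm{EK}}=\infty$ under the EK measure, which is exactly the unproved absorption statement.

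The paper uses the EK solution $\wh\Q$ in a different, quantitative way that sidesteps this issue. Since the marginal of $\wh K_t$ under $\wh\Q$ is Gaussian for every $t$, one has $\E_{\wh\Q}[\exp(r|\sigma^{-1}(\wh X_s)\wh K_s|^2)]\le c$ for small $r$; Jensen then bounds $\E_{\wh\Q}[\exp(\int_0^{t_0}|\sigma^{-1}\wh K|^2\,ds)]$ for small $t_0$. A Cauchy--Schwarz trick (write $\E_\P[e^{\langle N\rangle_t}]=\E_\Q[e^{\langle N\rangle_t}e^{-N_t-\tfrac12\langle N\rangle_t}]$ and dominate by the $\wh\Q$-expectation, using that the killed $\Q$-process has the same law as $(\wh X,\wh K)$ on $[0,\zeta)$) verifies Novikov's criterion for the Girsanov density $M$ on a short interval. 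This makes $M$ a genuine $\P$-martingale on $[0,c_2]$, whence $\Q(T_\infty>c_2)=\E_\P[M_{c_2}]=1$; iterating with the Markov property gives $\Q(T_\infty=\infty)=1$. This argument never needs $\Delta$ to be absorbing under $\wh\Q$---it only uses the fixed-time marginals of $\wh K$, which are unambiguously Gaussian by stationarity. Your integrability observation $\int_D|\nabla V|^2 e^{-V}\,dx<\infty$ is correct but is not where the proof hinges; the crucial moment control is on $|K_s|^2$ under the EK measure, not on $|\nabla V|$ under $\P$.
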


\pf  Let $(X, \P_x)$ denote the symmetric diffusion process with
infinitesimal generator
$$
\LL_x:=\frac12 e^V \sum_{i, j=1}^d \partial_i
\left(e^{-V} a_{ij} \partial_j \right).
$$
By Theorem \ref{T:4.2}, $(X, \P_x)$ is conservative.
Let $K_t=K_0-\frac12  \int_0^t (\Gamma \nabla) (X_s) \,ds$. Define for $t\geq 0$,
$$ M_t:=\exp \left( \sigma^{-1} (X_s) K_s \,dB_s -\frac12 \int_0^t | \sigma^{-1}
(X_s) K_s |^2 \,ds \right) .
$$
Clearly $\{M_t, t\geq 0\}$ is a continuous positive $\P_x$-local
martingale for every $x\in D$ with respect to the minimal
augmented filtration $\{\FF_t, t\geq 0\}$ of $X$. For $n\geq 1$,
let $T_n:=\inf\{t>0: |K_t|\geq 2^n\}$. Since $X$ is conservative,
we have $T_\infty:=\lim_{n\to \infty} T_n=\infty$, $\P_x$-a.s.,
for all $x\in D$.

Let $\P$ be the distribution of the symmetric stationary diffusion
$X$ with initial probability distribution $c_1 e^{-V}\, dx$ in $D$
and $K\equiv 0$. The existence of such a measure follows, for
example, from Theorem \ref{T:4.2} with $\Gamma \equiv 0$. Let
$K_0$ have the Gaussian distribution $c_2 \exp (-(\Gamma^{-1}y,
y))$. Define a new probability measure $\Q$ on $\FF_\infty$ by
$$ \frac{d\Q }{d\P } = M_t \qquad \hbox{on } \FF_t  \quad \hbox{for every } t\geq
0.
$$
By the Girsanov theorem, the process
$$ W_t:=B_t-\int_0^t \sigma^{-1}(X_s) K_s \,ds$$
is a Brownian motion under $\Q $ up to the time $T_\infty$, so
under $\Q$,
\begin{equation}\label{e:xk}
 \begin{cases}
 d X_t = \sigma (X_t) \,dW_t +{\bf b}(X_t) \,dt -\frac 12 (A\nabla V)(X_t) \,dt
    + K_t \,dt , \\
  dK_t = -\frac12 \Gamma \nabla V(X_t) \,dt,
  \end{cases}
  \qquad \hbox{for } 0\leq t<T_\infty.
  \end{equation}
Note that $(X, K)$ has initial distribution $\pi$.

Conversely, given a solution $(X, K)$ of \eqref{e:xk} under the
measure $\Q$, the process $X$ is a conservative $\LL_x$-diffusion
under the measure
$$\exp \left( -\sigma^{-1} (X_s) K_s \,dW_s -\frac12 \int_0^t | \sigma^{-1}
(X_s) K_s |^2 \,ds \right)d\Q.
$$
So to prove that the solution to \eqref{e:diffV1L} is
conservative, it suffices to show that $\Q (T_\infty =\infty )
=1$.

We are going to show that $\{M_t, t\geq 0\}$ is in fact a
positive $\P$-martingale. This implies that  $\Q (T_\infty=
\infty )=1$ because $\Q(T_\infty >t)=\E_\P \left[ M_t \right]
=1$ for every $t>0$.

Let $E:=D\times \R^d$ and $E_\Delta = E \cup \{\Delta\}$ be the
one-point compactification of $E$. Recall $\GG$ is defined by
\eqref{generator} with  $\DD (\GG)=C^2_c(E)$. Since \eqref{eqn:G}
holds, by the same argument as in the proof of Theorem \ref{T:1},
we deduce that   $\pi$ is a stationary measure
for some solution $\wh \Q$ on $\bD (\RR_+, E_\Delta)$ to the
martingale problem for $(\GG, \DD (\GG))$. Let $(\wh X, \wh K)$
denote the coordinate maps on $E_\Delta$ and set
$\zeta:=\inf\{t>0: (\wh X_t, \wh K_t)=\Delta\}$. Then   $\{(\wh
X_t, \wh K_t), t<\zeta \}$ satisfies the SDE \eqref{e:xk}, and
consequently, it has the same distribution as $\{(X_t, K_t),
t<T_\infty \}$ under $\Q$.

Note that the matrix $\sigma^{-1}$ is bounded so there is a constant
$c_1>0$ such that
$$|\sigma^{-1}(x) v| \leq c_1 |v| \qquad \hbox{for every } x\in
\overline D \hbox{ and } v\in \RR^d.
$$
Since under $\wh \Q$,  $\wh K_t$ has the same Gaussian
distribution for every $t\geq 0$, there exist $c_2$ and $c_3$ such
that if $r\leq c_2$, then
$$ \E_{\wh \Q} \left[ \exp \left( r \left| \sigma^{-1} (\wh X_s) \wh K_s \right|^2
  \right)\right]
\leq  \E_{\wh \Q} \left[ \exp \left( r c_1^2  \, \big| \wh K_s
\big|^2 \right)\right]\leq c_3 \qquad \hbox{for every } s\geq 0.
$$
By Jensen's inequality applied with the measure $\frac{1}{t_0}
1_{[0,t_0]}(s)\, ds$ with $t_0\in (0, c_2]$ and the function $e^{x}$
we have
\begin{align*}
\E_{\wh \Q} \left[ \exp \left( \int_0^{t_0} \left| \sigma^{-1} (\wh
X_s) \wh K_s \right|^2\, ds\right) \right] &=\E_{\wh \Q} \left[ \exp
\left( \frac{1}{t_0}\int_0^{t_0} \Big(t_0 \left| \sigma^{-1} (\wh X_s)
\wh K_s \right|^2\Big)
\,ds \right) \right] \\
&\leq \E_{\wh \Q} \left[ \frac{1}{t_0}\int_0^{t_0} \exp \left( t_0
\left|
 \sigma^{-1} (\wh X_s) \wh K_s \right|^2 \right)
\, ds \right]\\
&\leq \frac1{t_0} \int_0^{t_0} c_3\, ds =c_3 .
\end{align*}

Define $N_t=\int_0^t \sigma^{-1}(X_s) K_s\, dB_s$. This is a
martingale with respect to $\P$ and its quadratic variation $\< N
\>_t$ is equal to  $\int_0^t | \sigma^{-1}(X_s) K_s|^2\, ds$ under
both $\P$ and $\Q$. Note that $\exp(-N_t-\frac12 \angel{N }_t)$ is
a positive local martingale with respect to $\P$ and hence a
$\P$-supermartingale. Recall that $T_\infty$ is the lifetime for
$(X, K)$, i.e., $(X_t, K)=\Delta$ for $t\geq T_\infty$, and that
by convention, every function $f$ on $E$
is extended to a function on $E_\Delta$ by setting $f(\Delta)=0$.
In particular, we have $N_t=N_{t\wedge T_\infty}$. We have
$$\E_{\Q} \Big[e^{-2N_t}\Big]=\E_\P\Big[ e^{-N_t-\frac12 \angel{N}_t}\Big]\leq 1.
$$
Using  Cauchy-Schwartz,
\begin{align*}
\E_\P \left[ \exp \left( \int_0^t \left| \sigma^{-1}(X_s) K_s
\right|^2\, ds \right) \right]
&=\E_\P \Big[e^{\angel{N }_t}\Big]\\
&=\E_{\Q } \Big[ e^{\angel{N }_t} e^{-N_t-\frac12 \angel{N}_t}  \Big]\\
&\leq \Big( \E_{\Q } \left[ e^{-2N _t} \right]\Big)^{1/2}
\Big(\E_{\Q } \left[ e^{\angel{N }_t} \right]\Big)^{1/2}\\
&\leq  \left( \E_{\Q } \left[ \exp \left( \int_0^{t }
\left| \sigma^{-1}(X_s) K_s \right|^2 \,ds \right)
\right] \right)^{1/2} \\
&\leq \left(\E_{\wh \Q } \left[ \exp \left( \int_0^t \left|
\sigma^{-1}(\wh X_s) \wh K_s \right|^2 \,ds \right)
\right]\right)^{1/2} .
\end{align*}
As we observed in the previous paragraph, the last term is bounded
if $t\leq c_2$. It follows from Novikov's criterion (see
\cite[Proposition VIII.1.15]{RY}) that $\{M_t, t\in [0, c_2]\}$ is a
uniformly integrable $\P$-martingale. It follows that $\Q (T_\infty
>c_2)=1$. Using the Markov property, we have $\Q(T_\infty =\infty
)=1$. Consequently, $\pi$ is a stationary distribution for $(X,
K)$ of \eqref{e:diffV1L}. This proves the theorem.
 \qed

\bigskip

The next corollary follows immediately from Theorem \ref{T:4.3} and
the fact that the solution of \eqref{e:diffV1L} depends in a
continuous way in its initial starting point $(x_0, k_0)$.

\begin{corollary} \label{C:4.4}
Under the conditions of Theorem \ref{T:4.3}, for every $(x_0, k_0)\in
D\times \RR^d$, the minimal solution of \eqref{e:diffV1L} with
initial value $(x_0, k_0)$ is conservative.
\end{corollary}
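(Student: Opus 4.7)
The plan is to combine Theorem \ref{T:4.3} with the continuous dependence of the law of the solution of \eqref{e:diffV1L} on its initial condition. Write $\P_{(x_0, k_0)}$ for the law of the minimal solution started at $(x_0, k_0)$ and let $\zeta$ denote its explosion time.

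First I would pass from conservativeness under the stationary distribution to conservativeness at $\pi$-almost every deterministic starting point. By Theorem \ref{T:4.3}, $\P_\pi(\zeta = \infty) = 1$, and weak uniqueness for \eqref{e:diffV1L} (invoked at the end of the proof of Theorem \ref{T:1} via \cite{SV}) gives the disintegration $\P_\pi = \int \P_{(x, k)}\, \pi(dx\, dk)$; Fubini forces $\P_{(x, k)}(\zeta = \infty) = 1$ for $\pi$-almost every $(x, k)$. Since $\pi$ has a strictly positive density with respect to Lebesgue measure on $D \times \RR^d$, the set of conservative starting points is dense.

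Next I would introduce the exhausting compacts $G_m := \{(x, y) \in D \times \RR^d : \dist(x, \partial D) \geq 1/m \text{ and } |y| \leq m\}$ and the corresponding exit times $\tau_m(x_0, k_0)$ of the solution from $G_m$; conservativeness at $(x_0, k_0)$ is equivalent to $\lim_{m \to \infty} \P_{(x_0, k_0)}(\tau_m > T) = 1$ for every $T > 0$. On each $G_m$ the coefficients $\sigma$, $\bfb$, and $A \nabla V$ of \eqref{e:diffV1L} are bounded and have controlled modulus of continuity, so for two solutions $(X^0, K^0)$ and $(X', K')$ coupled through a common Brownian motion and started from $(x_0, k_0)$ and $(x', k')$, applying Gronwall to $|X^0 - X'|^2 + |K^0 - K'|^2$ up to $T \wedge \tau_{m+1}^0 \wedge \tau_{m+1}'$ yields an estimate of the form
$$\E\Bigl[\sup_{t \leq T \wedge \tau_{m+1}^0 \wedge \tau_{m+1}'} \bigl(|X_t^0 - X_t'|^2 + |K_t^0 - K_t'|^2\bigr)\Bigr] \leq C_{m, T}\bigl(|x_0 - x'|^2 + |k_0 - k'|^2\bigr).$$
To conclude, fix $(x_0, k_0) \in D \times \RR^d$, $T > 0$, $\varepsilon > 0$, and $m_0$ so that $(x_0, k_0)$ lies in the interior of $G_{m_0}$. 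Using the density from the first step, pick a conservative $(x', k')$ close to $(x_0, k_0)$ and then $m \geq m_0$ with $\P_{(x', k')}(\tau_m' > T) > 1 - \varepsilon$. Setting $\delta := \dist(G_m, \partial G_{m+1}) > 0$, the displayed bound and Chebyshev make the coupled solutions stay within $\delta$ of one another on $[0, T \wedge \tau_{m+1}^0 \wedge \tau_{m+1}']$ with probability exceeding $1 - \varepsilon$ once $(x', k')$ is chosen close enough. On the intersection of this event with $\{\tau_m' > T\}$, the primed solution remains in $G_m$ and the unprimed one, being $\delta$-close, stays in $G_{m+1}$ throughout $[0, T]$, so $\tau_{m+1}^0 > T$; hence $\P_{(x_0, k_0)}(\tau_{m+1}^0 > T) > 1 - 2\varepsilon$, and letting $\varepsilon \downarrow 0$ (with $m \to \infty$) yields conservativeness at $(x_0, k_0)$.

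The main obstacle I expect is the circular character of the exit-time comparison: Gronwall controls $\sup |X^0 - X'|$ only up to the smaller of the two exit times, so $\tau_m^0 > T$ cannot be read off directly from $\tau_m' > T$. The remedy is to work on a strictly larger compact $G_{m+1} \supset G_m$ and to exploit the positive buffer $\delta$ between $G_m$ and $\partial G_{m+1}$, which converts the pathwise closeness estimate into the required containment statement and closes the bootstrap.
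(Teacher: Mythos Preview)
Your strategy coincides with the paper's: the authors dispose of this corollary in one sentence, saying it ``follows immediately from Theorem~\ref{T:4.3} and the fact that the solution of \eqref{e:diffV1L} depends in a continuous way on its initial starting point.'' Your disintegration to get a dense set of conservative starting points and your buffer argument with $G_m\subset G_{m+1}$ are exactly the natural way to flesh this out.

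There is, however, a genuine regularity issue in your Gronwall step. Theorem~\ref{T:4.3} only assumes $V\in C^1(D)$ and $\sigma\in C^1(\overline D)$, so the drift terms $\bfb$, $A\nabla V$, and $\Gamma\nabla V$ are merely continuous on each $G_m$, not Lipschitz. A ``controlled modulus of continuity'' is not enough for the $L^2$ Gronwall estimate you display; that inequality needs a Lipschitz bound on the coefficients restricted to $G_{m+1}$. The same obstruction affects your coupling ``through a common Brownian motion'': without pathwise uniqueness (which again wants locally Lipschitz drift) you are not entitled to realize two solutions from nearby initial points on a common $B$ in the first place. Under the bare hypotheses of Theorem~\ref{T:4.3} one only has weak uniqueness via Stroock--Varadhan, hence the Feller property, and as you can check, the Portmanteau inequalities go the wrong way to transfer $\P_{(x',k')}(\zeta>T)=1$ from a dense set to the target point.

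In fairness, the paper's one-line proof glosses over precisely the same point. The clean fix is to impose $V\in C^{1,1}_{\mathrm{loc}}(D)$ (or $C^2$), which is harmless for the application in Section~\ref{sec:weakconv} since the approximating potentials $V_n$ there are $C^\infty$ on $D$; under that strengthening your argument goes through verbatim.
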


The following remark gives some sufficient conditions for $\varphi
\in W^{1,2}_0(D)$ and thus for the condition of Theorem \ref{T:1}
to hold with $e^{-V}=\varphi^2$.

\begin{remark}\label{R:4.5}
\begin{description} {\rm
\item{(i)} Let $ W^{1,2}(D):=\{f\in L^2(D, dx): \, \nabla f \in L^2(D, dx)\}$.
It is known (see, for example, \cite{FOT}) that  a function  $u\in W^{1,2}(D)$
is in $W^{1,2}_0(D)$ if and only if its quasi-continuous version vanishes quasi-everywhere
on $\partial D$.
So in particular, if $u\in W^{1,2}(D)$ and if
$$\lim_{x\in D\atop x\to z} u(x)=0 \ \hbox{ for every } z\in \partial D,
$$
then $u\in W^{1,2}_0(D)$.

\item{(ii)} For any positive bounded function $u\in
W^{1,2}_0(D)\cap C^1(D)$, $\varphi=e^{-1/u}\in W^{1,2}_0(D)\cap
C^1(D)$ and so Theorem \ref{T:1} is applicable for $V=2/u$ in view
of Theorem \ref{T:4.3}. This follows because $f:= \exp (-1/(2x))$
is bounded and Lipschitz continuous on $[0, k]$ for every $k\geq
1$. So by the normal contraction property (cf. \cite{FOT}),
$V=f(u)\in W^{1,2 }_0(D)$. In particular, this is the case if $u$
is the first positive eigenfunction for the Dirichlet Laplacian on
$D$ (if such a ground state exists).

\item{(iii)} The assumption that $e^{-V/2}\in C^1(D)\cap
W^{1,2}_0(D)$ is close to being optimal for ensuring that the
solution to \eref{e:diffV1L} never hits the boundary of the domain
$D$. Consider the case $n = 1$, $D = \RR_+$, $\sigma =1$, $V
\colon \RR_+ \to \RR_+$, and remove the inert drift so that $dX =
-\frac12 \nabla V(X)\,dt + dB$. Define a function $\Psi \colon
\RR_+ \to \RR$ by
\begin{equ}
\Psi(x) = \int_1^x \exp \bigl( V(y)\bigr)\, dy\;.
\end{equ}
Then one can check that $\Psi(X_t)$ is a local martingale with
quadratic variation greater or equal to $1$. In particular, it has
a positive probability of taking arbitrarily large values during
any finite time interval. If $V$ now diverges at $0$ sufficiently
slowly so that $\exp( V)$ is still integrable, this easily implies
that this diffusion has a positive probability of reaching $0$ in
any finite time.

Let us now consider the case where $\exp\bigl( V(x)\bigr) =
C/x^\alpha$ near $x = 0$. In this case, $\frac{d}{dx}e^{-V/2}$  is
$L^2$-locally integrable near $0$ if and only if $\alpha > 1$, which
is almost the range of parameters for
which $\exp(V)$ is no longer integrable at $0$, namely, $\alpha \geq 1$. }
\end{description}
\end{remark}

\bigskip

The remainder of this section is devoted to the analysis of the
case when both $\sigma $ (and hence $A$) and $\Gamma$  are the
identity matrix. We consider the SDE
\begin{equs}\label{e:diffV}
\begin{cases}
dX_t = - \frac12 \nabla V(X_t)\, dt + K_t\, dt + \,dB_t\;, \\
dK_t = - \frac12 \nabla V(X_t)\, dt \; .
\end{cases}
\end{equs}

It is possible to show under very weak additional assumptions that
$\pi$ is the only invariant measure for \eref{e:diffV}. This is
not obvious for example in the case where $V(x) = f(d(x, \partial D))$ for
some function $f\colon \RR_+ \to \RR_+$ which has a singularity at
$0$ and is such that $f(r) = 0$ for $r$ larger than some (small)
constant $\eps$. On the set $\{x\,:\, d(x,\partial D) > \eps\}
\times \RR^d$, the diffusion \eref{e:diffV} has then a
deterministic component $dK = 0$. In particular, this shows that
\eref{e:diffV} is not hypoelliptic and that its transition
probabilities are not absolutely continuous with respect to
Lebesgue measure.

For any multi-index $\alpha = \{\alpha_1, \ldots, \alpha_\ell\}$,
we define the vector field $\nabla_\alpha V$ by
\begin{equ}
\bigl(\nabla_\alpha V(x)\bigr)_k = {\partial^{\ell +1} V(x) \over
\partial x_k \partial x_{\alpha_1}\ldots
\partial
 x_{\alpha_\ell}}\;.
\end{equ}
Denoting by $|\alpha|$ the size of the multi-index, we
furthermore assume that

\begin{assumption}\label{ass:hypoelliptic}
There exists $x_* \in D$ such that the collection $\{
\nabla_\alpha V(x_*) \}_{|\alpha| > 0}$ spans all of $\RR^d$.
\end{assumption}

\begin{remark}
{\rm In the case where $D$ is smooth and $V$ is of the form $V(x)
= f(d(x, \partial D))$ for some smooth function $f$ diverging at
$0$, Assumption~\ref{ass:hypoelliptic} is satisfied as soon as
there exists a point on the boundary such that its curvature has
full rank.
 }
\end{remark}

We then have

\begin{proposition}\label{prop:unique}
If $e^{-V/2}\in C^\infty(D)\cap W^{1,2}_0(D)$ and Assumption
\ref{ass:hypoelliptic} holds, then $\pi (dx, dy):=c_0 {\bf 1}_D
\exp (-V(x)-|y|^2)\,dx\, dy$ is the unique invariant measure for
\eref{e:diffV}.
\end{proposition}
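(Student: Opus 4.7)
The plan is to apply a classical Doob-type uniqueness theorem for invariant measures of Markov processes: combine the strong Feller property (coming from hypoellipticity) with topological irreducibility (coming from a control theoretic argument). Since $\pi$ is already known to be an invariant measure by Theorem~\ref{T:4.3}, showing uniqueness reduces to establishing these two properties.

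I would begin by verifying H\"ormander's bracket condition at $x_*$. Let $V_i = \partial_{x_i}$ for $i=1,\dots,d$ denote the noise vector fields, and let
\begin{equation*}
V_0 = \sum_j \bigl(-\tfrac12 \partial_j V(x) + y_j\bigr)\partial_{x_j} - \tfrac12 \sum_j \partial_j V(x)\,\partial_{y_j}
\end{equation*}
denote the drift. A direct computation gives $[V_i, V_0] = -\tfrac12\sum_j (\partial_i\partial_j V)(\partial_{x_j} + \partial_{y_j})$. Iterating, the bracket $[V_{i_\ell},[V_{i_{\ell-1}},\dots,[V_{i_1},V_0]\dots]]$ produces a vector field whose coefficients involve the entries of $\nabla_\alpha V$ for $\alpha = (i_1,\dots,i_\ell)$, with equal $x$- and $y$-components. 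Subtracting the $x$-components (available through the $V_i$) yields vector fields of the form $\sum_j (\nabla_\alpha V(x))_j\,\partial_{y_j}$. Assumption~\ref{ass:hypoelliptic} then guarantees that at $x_*$ these span the $y$-directions, so the Lie algebra spans the full tangent space at $(x_*,y)$ for every $y\in\RR^d$. By smoothness of $V$, the spanning condition is open in $x$, so H\"ormander's condition holds on an open set $U \times \RR^d \subset D \times \RR^d$. By the standard H\"ormander theorem applied to the generator $\GG$, the transition semigroup $P_t$ has a smooth density on this set, which gives the strong Feller property on $U \times \RR^d$.

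Next I would establish topological irreducibility, namely that from any starting point $(x_0,k_0) \in D\times \RR^d$ the process reaches every open set with positive probability in a suitable time. This is a support-theorem type statement (in the spirit of Stroock--Varadhan): the support of the path law equals the closure of the set of solutions to the controlled ODE obtained by replacing $dB_t$ by $\dot u(t)\,dt$ for smooth $u$. Since the control acts directly on $X$, one can drive $X$ along arbitrary paths in $D$; and since $dK = -\tfrac12 \nabla V(X)\,dt$, one can use excursions of $X$ near $x_*$ together with the spanning property of $\{\nabla_\alpha V(x_*)\}$ to produce arbitrary displacements of $K$. A separate argument is needed to ensure that excursions keep $X$ away from $\partial D$ (this uses the non-explosion of Corollary~\ref{C:4.4}, via the fact that the minimal solution never reaches the boundary).

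With strong Feller on $U\times \RR^d$ plus irreducibility into $U\times \RR^d$, one concludes by a standard argument (see e.g.\ Doob or Khasminskii) that any two invariant probability measures have overlapping supports and, on that overlap, must agree; hence $\pi$ is the unique invariant probability measure. The main obstacle is upgrading the H\"ormander-based smoothing to a genuine strong Feller statement that is usable in combination with the irreducibility — the noise is highly degenerate and acts only in the $X$-direction, so some care is required to apply the classical regularity theory to the joint process $(X,K)$. The irreducibility step itself, once Assumption~\ref{ass:hypoelliptic} is in hand, is essentially a deterministic controllability computation and is the more routine part.
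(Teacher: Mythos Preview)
Your overall strategy (H\"ormander $\Rightarrow$ strong Feller at $(x_*,0)$, plus a support-theorem controllability argument $\Rightarrow$ every invariant measure sees $(x_*,0)$) is exactly what the paper does, and your Lie-bracket computation for the H\"ormander part is correct.

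The gap is in the controllability step for $K$. You propose to ``use excursions of $X$ near $x_*$ together with the spanning property of $\{\nabla_\alpha V(x_*)\}$ to produce arbitrary displacements of $K$.'' But $K$ obeys $\dot K = -\tfrac12 \nabla V(X)$: only the \emph{values} of $\nabla V$ along the path enter, not iterated brackets or higher derivatives. The reachable set from $K_0$ in time $T$ is $K_0 - \tfrac{T}{2}\cdot \overline{\mathrm{conv}}\{\nabla V(x): x \in D\}$, so to steer $K$ to $0$ from an arbitrary $K_0$ you need $\{\nabla V(x): x\in D\}$ to positively span $\RR^d$. Assumption~\ref{ass:hypoelliptic} does not give this: take $V(x)=|x|^2/2$, where the Hessian is the identity so Assumption~\ref{ass:hypoelliptic} holds at every point, yet near any $x_*\neq 0$ the gradient $\nabla V(x)=x$ stays in a half-space and local excursions cannot undo a large $K_0$.

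The paper's controllability argument uses a different, global, input: because $V(x)\to+\infty$ as $x\to\partial D$, the set $\{\nabla V(x): x\in D\}$ is not contained in any closed half-space, hence it positively spans $\RR^d$ (this was noted after \eqref{eqn:2}). One then picks finitely many points $x_1,\dots,x_\ell\in D$ and positive times $\alpha_i$ with $\sum_i \alpha_i \nabla V(x_i) = -2K_0$, and drives $X$ along a path that lingers time $\alpha_i$ at each $x_i$ and ends at $x_*$; this brings $K$ to $0$. So the spanning condition in Assumption~\ref{ass:hypoelliptic} is used only for hypoellipticity, while the controllability of $K$ rests on the boundary blow-up of $V$. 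Your proposal conflates these two mechanisms, and the step you label ``more routine'' is precisely where the real idea lives.
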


\begin{proof}
Let us first introduce the following concept. Given a Markov
operator $\CP$ over a Polish space $\CX$, we say that $\CP$ is
\textit{strong Feller at $x$} if $\CP\phi$ is continuous at $x$
for every bounded measurable function $\phi \colon \CX \to \RR$.
The proof of Proposition~\ref{prop:unique} is then based on the
following fact, a version of which can be found for example in
\cite[Thm~3.16]{HM}, which is a consequence of well-known results
from \cite[Chapter 4]{DPZ} and \cite[Section 6.1.2]{MT}: If
$\{\CP_t\}_{t \ge 0}$ is a Markov semigroup over $\CX$ such that
\begin{enumerate}
\item[(i)] There exists $t > 0$ and $x \in \CX$ such that
    $\CP_t$ is strong Feller at $x$,
\item[(ii)] One has $x \in \mathop{supp} \pi$ for every
    invariant probability measure $\pi$ for the semigroup
$\{\CP_t\}$,
\end{enumerate}
then the semigroup $\{\CP_t\}$ can have at most one invariant
probability measure.

Denote now by $\CP_t$ the Markov semigroup generated by solutions
to \eref{e:diffV}. It is easy to check that
Assumption~\ref{ass:hypoelliptic} means precisely that the
operator $\partial_t - \GG $, where $\GG$ is the generator of
\eref{e:diffV}, satisfies H\"ormander's condition
\cite{Hor,Nualart} in some neighborhood of $(x_*, K, t)$ for every
$K \in \RR^d$ and every $t > 0$. Since for any $\phi \in
\BB_b(\RR^d)$, the map $\Phi(t,y,K) = \bigl(\CP_t \phi\bigr)(y,K)$
is a solution (in the sense of distributions) of the equation
$\bigl(\partial_t - \GG\bigr)\Phi = 0$, this implies that $\CP_t
\phi$ is $\CC^\infty$ in a neighborhood of $(x_*, 0)$ for every $t
> 0$. In particular, $\CP_t$ is strong Feller at $(x_*, 0)$ for
every $t > 0$.

It now remains to show that the point $(x_*,0)$ belongs to the
support of every invariant measure of \eref{e:diffV}. This will be
checked by showing first that (ii) follows from the following two
properties:
\begin{enumerate}
 \item[(iii)] for all $y \in \CX$, there exists $s(y) \ge 0$ such
that
 $x_*
 \in \mathop{supp} \CP_{s(y)}(y, \cdot\,)$.
 \item[(iv)]
for every neighborhood $U$ of
$x_*$
 there exists a neighborhood $U' \subset U$ and a time $T_U > 0$
such that $\inf_{y \in U'} \CP_t(y, U) > 0$ for every $t < T_U$.
\end{enumerate}
The argument as to why (iii) and (iv) imply (ii) is the following:

Fix an arbitrary neighborhood $U$ of
$x_*$
 and an arbitrary invariant measure $\pi$. By the definition of an
invariant measure, one then has
\begin{equs}
\pi(U) &= \int_0^\infty \int_{\CX} e^{-t} \CP_t(y,U)\,
\pi(dy)\, dt
\ge \int_{\CX} \int_0^{T_U} e^{-(s(y) + t)} \CP_{s(y) + t}(y,U)\, dt\, \pi(dy) \\
&= \int_{\CX} \int_0^{T_U} \int_{U'} e^{-(s(y) + t)}
\CP_t(y',U) \CP_{s(y)}(y, dy')\, dt\, \pi(dy)
> 0\;.
\end{equs}
In the last inequality, we used the facts that $\CP_{s(y)}(y, U')
> 0$ by (iii) and $\CP_t(y',U) > 0$ by (iv). Since $U$ was
arbitrary, this inequality is precisely (ii).

Since (iv) is satisfied for every SDE with
locally Lipschitz coefficients, it remains to check that (iii) is
also satisfied. For this, it is enough to check that, for every
$(x,K) \in \RR^{2d}$, there exists $T > 0$ such that $\CP_T(x,K;
A) > 0$ for every neighborhood $A$ of the point $(x_*,0)$. In
order to show this, we are going to apply the Stroock-Varadhan
support theorem \cite{SV}, so we consider the control system
\begin{equ}[e:control]
\dot x = -\frac12 \nabla V(x) + K + u(t)\;,\qquad  \dot K =
-\frac12 \nabla V(x)\;,
\end{equ}
where $u \colon [0,T] \to \RR^d$ is a smooth control. The claim is
proved if we can show that for every $(x_0, K_0)$, there exists
$T>0$ such that, for every $\eps > 0$ there exists a control such
that the solution to \eref{e:control} at time $T$ is located in an
$\eps$-neighborhood of the point $(x_*,0)$. Our proof is based on
the fact that, since we assumed that $V(x)$ grows to $+\infty$ as
$x$ approaches $\partial D$, there exists a collection of $\ell$
points $x_1,\ldots, x_\ell$ ($\ell > n$) such that the positive
cone generated by $\nabla V(x_1), \ldots, \nabla V(x_\ell)$ is all
of $\RR^d$. (This fact has already been noted in the paragraph following
\eqref{eqn:2}.) Fix now an initial condition $(x_0, K_0)$. From
the previous argument, there exist positive constants $\alpha_1,
\ldots, \alpha_\ell$ such that $\sum_{i=1}^\ell \alpha_i \nabla
V(x_i) = - K_0$. Fix now $T = \sum_{i=1}^\ell \alpha_i$ and
consider a family $X_\eps \colon [0,T] \to \RR^d$ of smooth
trajectories such that:
\begin{enumerate}
\item[(i)]  there are time intervals $I_i$ of lengths greater than
or equal to $\alpha_i - \eps$ such that $X_\eps(t) = x_i$ for $t
\in I_i$, \item[(ii)] $X_\eps(0) = x_0$ and $X_\eps(T) = x_*$,
\item[(iii)] one has $\int_{[0,T] \setminus \bigcup I_i}
    |\nabla V(X_\eps(t))|\, dt \le \eps$.
\end{enumerate}
Such a family of trajectories can easily be constructed (take a
single trajectory going through all the $x_i$ and run through it
at the appropriate speed). It now suffices to take as control
\begin{equ}
u(t) = \dot X_\eps(t) + \frac12 \nabla V(X_\eps(t)) + \frac12
\int_0^t \nabla V(X_\eps(s))\;,
\end{equ}
and to note that the solution to \eref{e:control} is given by
$\bigl(X_\eps(t), K_0 - \int_0^t \nabla V(X_\eps(s))\bigr)$. This
solution has the desired properties by construction.
\end{proof}

\begin{remark}
{\rm The paper \cite{BRO} also shows uniqueness of the invariant
measure for a mechanical system for which hypoellipticity fails to
hold on an open set. However, the techniques used there seem to be
less straightforward than the argument used here and cannot easily
be ported to our setting.}
\end{remark}

\begin{remark}
{\rm
 Through all of this section, the assumption that $\exp(-V/2) \in
W_0^{1,2}(D)$ could have been replaced by the assumptions that $V
\in \CC^\infty$, $\lim_{x \to
\partial D} V(x) = +\infty$, $\int_{D} \exp(-V(x))\, dx
< \infty$, and
there exists a constant $C \in \RR$ such that the relation
\begin{equ}[e:assV]
\Delta V(x) \le |\nabla V(x)|^2 + C \bigl(1+
V(x)\bigr)
\end{equ}
holds for every $x \in D$. This is because \eref{e:assV} ensures
that $V(x) + {K^2\over 2}$ is a Lyapunov function for the
diffusion \eref{e:diffV} and so guarantees the non-explosion of
solutions.
 }
\end{remark}

\section{Weak convergence}
\label{sec:weakconv}

In this section, we will prove that reflecting diffusions with
inert drift can be approximated by diffusions with smooth inert
drifts.

Let $D$ be a bounded Lipschitz domain in $\RR^d$ and use
$\delta_D(x)$ to denote the Euclidean distance between $x$ and
$D^c$. A continuous function $x\mapsto \delta (x)$ is called a
regularized distance function to $D^c$ if there are constants
$c_2>c_1>0$ such that
\begin{description}
\item{\rm (i)} $ c_1\delta_D(x)\leq \delta (x)\leq
 c_2
  \delta_D(x)$ for every $x\in \R^d$;
\item{\rm (ii)} $\delta \in C^\infty (D)$ and for any
    multi-index $\alpha=(\alpha_1, \cdots, \alpha_d)$ with
    $|\alpha|:=\sum_{k=1}^d \alpha_k$, there is a constant
$c_\alpha$
 such that
$$ \left| \frac{\partial^{|\alpha|} \delta (x)}{(\partial x_1)^{\alpha_1}
 \cdots (\partial x_d)^{\alpha_d}} \right| \leq c_\alpha \delta_D(x)^{1-|\alpha|}
 \qquad \hbox{for every } x\in D.
$$
\end{description}
The existence of such a regularized distance function $\delta$ is
given by
  \cite[Lemma 2.1]{WZ}.
For $n\geq 1$, define
\begin{equ}[eq:50]
V_n(x):=
\begin{cases} \exp \left( \frac1{n \delta (x) } \right)
\qquad &\hbox{for } x\in D, \\
+\infty &\hbox{for } x\in D^c.
\end{cases}
\end{equ}
Observe that $V_n\in C^\infty (D)$ and for every multi-index
$\alpha$, the $\alpha$-derivative of $e^{-V_n}$ at $x\in D$ tends
to zero as $x\to \partial D$. So $e^{-V_n} \in C^\infty (\R^d)$.
Observe also that as $n\to \infty$, $e^{-V_n(x)}$ increases to
$e^{-1}\, \1_D (x)$.

Suppose that $A=\sigma^T\sigma$ is a uniformly elliptic and
bounded matrix
 having $C^2$ entries $a_{ij}$ on $\overline D$, and $\rho$
is a $C^2$ function on $\overline D$ that is bounded
between two positive constants.

 We will consider the smooth potential approximations to the
 symmetric reflecting diffusion process on $D$ associated
with generator ${\cal L}=\frac 1{2\rho}  \nabla (\rho A\nabla)$
but with an additional
inert
 drift. More precisely, let $\Gamma$ be a positive definite
constant symmetric $d\times d$-matrix. As before, we use ${\bf
n}(x)$ to denote the unit inward normal vector at $x\in \partial
D$ and define ${\bf u}(x):= A(x) \n (x)$. We consider the
following special case of the
SDE \eqref{eqn:1}:
\begin{equs}\label{e:diffV1R}
\begin{cases}
dX_t &= \sigma (X_t) \,dB_t +{\bf b}(X_t) \,dt +{\bf u} (X_s) \,
 \,dL_t
 + K_t\, \,dt \;, \\
 dK_t &=  \Gamma \, \n (X_t)\,  d L_t ,
 \end{cases}
\end{equs}
where  $X_t\in \overline D$ for every $t\geq 0$, and $L$ is
continuous and non-decreasing with $L_t=\int_0^1 \1_{\partial
D}(X_s) \,dL_s$. Here $B$ is a $d$-dimensional Brownian motion and
the drift ${\bf b}$ is given by ${\bf b}=(b_1, \cdots, b_d)$ with
 $b_k(x):=\frac1{2\rho} \sum_{i=1}^d
\partial_i ( \rho (x) a_{ik}(x))$.

Let $X^{(n)}$ be the diffusion given by
$$ dX^{(n)}_t= \sigma (X^{(n)}_t)
\,dB^{(n)}_t
 + {\bf b } (X^{(n)}_t) \,dt
 -\frac12 (A\nabla V_n) (X^{(n)}_t)  \,dt  ,
$$
with initial probability distribution $\pi_n(dx):= c_n \exp
(-V_n(x)) \rho (x) \1_D(x) \,dx$, where $B^{(n)}$ is a
$d$-dimensional Brownian motion, and $c_n>0$ is a normalizing
constant so that $\pi_n(\RR^d)=1$. Hence $X^{(n)}$ is the
minimal diffusion in $D$ with infinitesimal generator
$$
\LL^{(n)}=\frac1{2\rho (x)e^{-V_n(x)}} \sum_{i,j=1}^d
\partial_i \left(\rho (x) e^{-V_n (x)} a_{ij} (x) \partial_j \right) ,
$$
which is symmetric with respect to the measure $\pi_n$. Clearly
$e^{(\log \rho -V_n)/2}\in C(D)\cap W^{1,2}_0(D)$ and so by Theorem
\ref{T:4.2}, $X^{(n)}$ is conservative and never reaches $\partial
D$. In addition, $X^{(n)}$ is a symmetric diffusion with respect to
the measure $\pi_n$ in $D$ and its Dirichlet form $(\EE^{(n)},
\FF^{(n)})$ in $L^2(D, \pi_n(dx))$ is given by (see \cite[Lemma
3.5]{WZ})
\begin{equs}
\EE^{(n)}(f, f)&:=\frac12 \int_{D} \sum_{i, j=1}^d a_{ij}(x)
\partial_i f(x) \partial_j f(x) \pi_n(dx)\;,\\
\FF^{(n)}&:=\left\{f\in L^2(D, \pi_n): \EE^{(n)} (f,
f)<\infty\right\}.
\end{equs}
Without loss of generality, we assume that $X^{(n)}$ is defined on
the canonical path space $\Omega:=C([0, \infty), \RR^d)$ with
$X^{(n)}_t (\omega)=\omega (t)$. On $\Omega$, for every $t>0$, there
is a time-reversal operator $r_t$, defined for $\omega\in\Omega$ by
\begin{equation}\label{eqn:timerevers}
r_t(\omega)(s):=
\begin{cases}
\omega (t-s), & \hbox{if }0\le s\le t,\\
\omega(0), & \hbox{if } s\ge t.
\end{cases}
\end{equation}
Let $\P_n$ denote the law of $X^{(n)}$ with initial distribution
$\pi_n$ and let $\{\FF^{(n)}_t, t\geq 0\}$ denote the minimal
augmented filtration generated by $X^{(n)}$. Then it follows from
the reversibility of $X^{(n)}$ that
 for every $t>0$, the measure $\P_n$ on $\FF^{(n)}_t$ is invariant
under the time-reversal operator $r_t$ (cf. \cite[Lemma
5.7.1]{FOT}). We know that
\begin{equation}\label{eqn:a1}
 X^{(n)}_t-X^{(n)}_0=M^{(n)}_t +\int_0^t \left( {\bf b} (X^{(n)}_s)
 -\tfrac12
(A \nabla V_n)
 (X^{(n)}_s ) \right) \,ds, \qquad t\geq 0,
\end{equation}
where $M^{(n)}_t:=\int_0^t \sigma (X^{(n)}_s ) \,dB^{(n)}_s$ is a
square-integrable martingale.
 On the other
hand, by the Lyons-Zheng's forward and backward martingale
decomposition (see \cite[Theorem 5.7.1]{FOT}), we have for every
$T>0$,
\begin{equation}\label{eqn:a2}
  X^{(n)}_t-X^{(n)}_0 = \frac12 M^{(n)}_t - \frac12 (M^{(n)}_T-M_{T-t}^{(n)} ) \circ r_T
\qquad \hbox{for } t\in [0, T].
\end{equation}
Since $\sigma$ is bounded, for every $T>0$, the law
of $\{M^{(n)}_t, t\geq 0\}$ under $\P_n$ is
 tight in the space $C([0, T], \RR^d)$ and so is $\frac12
(M^{(n)}_T-M_{T-t}^{(n)} ) \circ r_T$. It follows that the laws
 of  $(X^{(n)}, M^{(n)}, (M^{(n)}_T-M_{T-\cdot}^{(n)} ) \circ r_T)$ under $\P_n$
are tight in the space $C([0, T], \RR^{3d})$ for every $T>0$. On
the other hand, by (\ref{eqn:a1})-(\ref{eqn:a2}),
$$ H^{(n)}(t):=-\frac12   \int_0^t  (A \nabla V_n) (X^{(n)}_s)\,ds=-\frac12
 B^{(n)}_t- \frac12  (B^{(n)}_T-B_{T-t}^{(n)} ) \circ r_T -\int_0^t
{\bf b} (X^{(n)}_s) \,ds.
$$
Thus the laws of $(X^{(n)}, M^{(n)}, \, \int_0^\cdot {\bf b}
(X^{(n)}_s ) \,ds, \, H^{(n)}, \, B^{(n)})$ under $\P_n$ are tight
with respect to  the space $C([0, T], \RR^{5d})$ for every $T>0$. By almost
the same argument as that for \cite[Theorems 4.1 and 4.4]{PW},
$(X^{(n)}, \P_n)$ converges weakly in $C([0, T], \RR^d)$ to
a stationary reflecting diffusion  $X$ in $D$ with normalized
initial distribution $c \rho (x) \,dx$ on $D$. Passing to a
subsequence, if necessary, we conclude that
\begin{description}
\item{}
\qquad $((X^{(n)},M^{(n)}, \,
\int_0^\cdot {\bf b} (X^{(n)}_s ) \,ds, \, H^{(n)}, \, B^{(n)}),
\P_n)$ converges weakly in $C([0, T], \RR^{4d})$ to a process
 \hfil\break
$((X,M,  \, \int_0^\cdot {\bf b} (X_s ) \,ds, \, H, \,
B), \P)$.
\end{description}

Now we apply the Skorokhod representation (see Theorem 3.1.8 in \cite{EK})
to construct the processes $(X^{(n)},M^{(n)}, \, \int_0^\cdot {\bf
b} (X^{(n)}_s ) \,ds, \, H^{(n)}, \, B^{(n)})$ and $(X,M,  \,
\int_0^\cdot {\bf b} (X_s ) \,ds, \, H, \, B)$ on the same
probability space $(\Omega, {\cal F}, \P)$ in such a way that
$(X^{(n)},M^{(n)}, \, \int_0^\cdot {\bf b} (X^{(n)}_s ) \,ds, \,
H^{(n)}, \, B^{(n)})$ converges to \hfill \break $(X,M,  \,
\int_0^\cdot {\bf b} (X_s ) \,ds, \, H, \, B)$ a.s., on the time
interval $[0, 1]$ in the supremum norm. Therefore, in view of
(\ref{eqn:a1}),
 $$ X_t=X_0+M_t +  \int_0^t {\bf b} (X_s ) \,ds +H
 \qquad \hbox{for every } t\geq 0.
$$

By the proof of \cite[Theorem 6.1 and remark 6.2]{PW}, $H$ is a
continuous process of locally finite variation. On the other hand,
since $X$ is reflecting diffusion in a Lipschitz domain, by
\cite{C} it admits a Skorokhod decomposition. That is,
$$ X_t=X_0+\int_0^t \sigma (X_s) \,dB_t+ \int_0^t {\bf b}(X_s) \,ds+
\int_0^t A(X_s) \n (X_s)\,dL_s, \qquad t\geq 0,
$$
where $B$ is a Brownian motion with respect to the minimal
augmented filtration generated by $X_t$, $\n$ is the unit inward
normal vector field on $\partial D$, which is well defined a.e.
with respect to the surface measure $\sigma$, and $L$ is a
positive continuous additive functional of $X$  with Revuz measure
$\sigma$. By the uniqueness of Doob-Meyer decomposition, we have
$$ H_t=\int_0^t A (X_s) \n (X_s) \,dL_s, \qquad \hbox{for } t\geq 0.
$$

Define for $t\geq 0$,
$$K^{(n)}_t:=-\frac12  \Gamma \, \int_0^t  \nabla V_n (X^{(n)}_s) \,ds
\qquad \hbox{and} \qquad K_t:= \Gamma \, \int_0^t \n (X_s) \,dL_s.
$$
 Set
$$ Z_t=\exp\left( \int_0^t  \sigma^{-1} (X_s) K_s \,dB_s-\frac 12 \int_0^t
| \sigma^{-1} (X_s)K_s|^2 \,ds \right), \quad t\geq 0,
$$
and
$$ Z^{(n)}_t=\exp\left( \int_0^t \sigma^{-1}(X^{(n)}_s) K^{(n)}_s \,dB^{(n)}_s
-\frac 12 \int_0^t |\sigma^{-1}(X^{(n)}_s) K^{(n)}_s|^2 \,ds
\right), \qquad t\geq 0.
$$
Let $d\Q = Z_1 d\P$ and $d\Q_n = Z^{(n)}_1 d\P$. By the Girsanov
theorem, under $\Q_n$, $(X^{(n)}, K^{(n)})$ satisfies the
following equation
\begin{equs}
\begin{cases}
dX^{(n)}_t= \sigma (X^{(n)}_t) \,dW^{(n)}_t+{\bf b} (X^{(n)}_t) -\frac12 (A \nabla V_n) (X^{(n)}_t) \,dt +K^{(n)}_t \,dt, \\
dK^{(n)}_t= -\frac12 \Gamma \, \nabla V_n (X^{(n)}_t) \,dt ,
\end{cases}
\end{equs}
where $W^{(n)}$ is a $d$-dimensional Brownian motion. On the other
hand, by the proof of Theorem  \ref{T:2.1}, $(X, K)$ under $\Q$ is
Brownian motion with inert drift, and satisfies
\begin{equs}
\begin{cases}
dX_t= \sigma (X_t) \,dW_t+ {\bf b}(X_t) \,dt + (A\n) (X_t) \,dL_t +K_t \,dt, \\
dK_t= \Gamma \, \n(X_t) \,dL_t ,
\end{cases}
\end{equs}
where $W$ is a $d$-dimensional Brownian motion.

In the following, the initial distribution of $(X^{(n)}_0,
K^{(n)}_0)$ and $(X_0, K_0)$ under $\P$ are taken to be
\begin{equ}[defmu]
c_n \1_D(x) \rho (x) e^{-V_n(x)-(\Gamma^{-1}y, y)} \,dx \,dy
\end{equ}
and  $c \1_D(x) \rho (x) e^{-(\Gamma^{-1}y, y)}\, dx\, dy$, respectively, where
$c_n>0$ and $c>0$ are normalizing constants chosen to make sure that
the corresponding measures are probability measures. Since $e^{(\log
\rho -V_n)/2}\in C(D)\cap W^{1,2}_0(D)$, we know from Theorem
\ref{T:4.3} that $c_n \1_D(x) \rho (x) e^{-V_n(x)-(\Gamma^{-1}y, y)}
\,dx \,dy$ is a stationary measure for $(X^{(n)}, K^{(n)})$ under $\Q_n$.

\begin{theorem}\label{T:wc.1}
For every $T>0$, the law of $(X^{(n)}, K^{(n)})$ under $\Q_n$
converges weakly in the space $C([0, T], \RR^{2d})$ to that of the
law of $(X, K)$ under $\Q$.
\end{theorem}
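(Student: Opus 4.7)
The plan is to establish the weak convergence by working on the common Skorokhod probability space $(\Omega, \FF, \P)$ constructed just before the theorem, and passing to the limit in the Girsanov densities $Z^{(n)}_T \to Z_T$. First I would extend the Skorokhod coupling to include $K^{(n)} \to K$ uniformly a.s.~on $[0,T]$. Since $dK^{(n)}_t = \Gamma A^{-1}(X^{(n)}_t)\, dH^{(n)}_t$ and $H^{(n)} \to H = \int_0^\cdot A(X_s)\n(X_s)\,dL_s$ uniformly a.s., the continuity and boundedness of $A^{-1}$ on $\overline D$ together with uniform convergence $X^{(n)} \to X$ give
\begin{equation*}
 \Gamma \int_0^\cdot A^{-1}(X^{(n)}_s)\, dH^{(n)}_s \;\longrightarrow\; \Gamma\int_0^\cdot A^{-1}(X_s)\,A(X_s)\n(X_s)\,dL_s = \Gamma\int_0^\cdot \n(X_s)\,dL_s,
\end{equation*}
uniformly a.s.; coupling $K^{(n)}_0 = K_0$ (both Gaussian with the same law under $\P$) finishes the construction.

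Next I would show $Z^{(n)}_T \to Z_T$ in $L^1(\P)$, which decomposes into (a) convergence in probability and (b) uniform integrability. For (a): the quadratic-variation piece $\int_0^T |\sigma^{-1}(X^{(n)}_s) K^{(n)}_s|^2\,ds \to \int_0^T |\sigma^{-1}(X_s) K_s|^2\,ds$ a.s.\ by dominated convergence, since $\sigma^{-1}$ is bounded and continuous and the integrands converge uniformly. For the stochastic integral part, rewriting $dB^{(n)}_s = \sigma^{-1}(X^{(n)}_s)\,dM^{(n)}_s$ turns it into an integral against the martingales $M^{(n)} \to M$ (which converge uniformly a.s.\ with converging quadratic variations), and a standard It\^o-isometry or Kurtz--Protter-type argument yields convergence in probability to the analogous integral against $M$. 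Hence $\log Z^{(n)}_T \to \log Z_T$ in probability, and so $Z^{(n)}_T \to Z_T$ in probability.

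The main obstacle is (b), uniform integrability, and here the Cauchy--Schwartz trick from the proof of Theorem~\ref{T:4.3} is essential. Under $\Q_n$ the law of $(X^{(n)}, K^{(n)})$ is stationary with distribution $c_n \1_D(x)\rho(x) e^{-V_n(x)} e^{-(\Gamma^{-1} y, y)}\,dx\,dy$ whose $K$-marginal is a fixed Gaussian independent of $n$; consequently, for some $c_2>0$ small enough, Jensen's inequality in time gives $\E_{\Q_n}\bigl[\exp\bigl(\int_0^{c_2} |\sigma^{-1}(X^{(n)}_s) K^{(n)}_s|^2\,ds\bigr)\bigr] \le c_3$ uniformly in $n$, where $c_3$ depends only on $\|\sigma^{-1}\|_\infty$ and $\Gamma$. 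Setting $N^{(n)}_t = \int_0^t \sigma^{-1}(X^{(n)}_s) K^{(n)}_s\,dB^{(n)}_s$, Cauchy--Schwartz gives
\begin{equation*}
 \E_\P\bigl[e^{\angel{N^{(n)}}_{c_2}}\bigr] = \E_{\Q_n}\bigl[e^{\angel{N^{(n)}}_{c_2}} e^{-N^{(n)}_{c_2}-\frac12 \angel{N^{(n)}}_{c_2}}\bigr] \le \bigl(\E_{\Q_n}[e^{-2N^{(n)}_{c_2}}]\bigr)^{1/2}\bigl(\E_{\Q_n}[e^{\angel{N^{(n)}}_{c_2}}]\bigr)^{1/2} \le c_3^{1/2},
\end{equation*}
which by Novikov's criterion makes $\{Z^{(n)}_t\}_{t\in[0,c_2]}$ a UI $\P$-martingale with uniform $L^p$-bound for some $p>1$; iterating over consecutive intervals of length $c_2$ covers $[0, T]$. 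Combining (a) and (b) with Vitali's theorem yields $Z^{(n)}_T \to Z_T$ in $L^1(\P)$.

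Finally, for any bounded continuous $\varphi \colon C([0,T], \RR^{2d}) \to \RR$, uniform convergence $(X^{(n)}, K^{(n)}) \to (X, K)$ a.s.\ gives $\varphi(X^{(n)}, K^{(n)}) \to \varphi(X, K)$ a.s.\ and boundedly, so a standard Vitali argument combined with $L^1$-convergence of the densities gives
\begin{equation*}
 \E_{\Q_n}[\varphi(X^{(n)}, K^{(n)})] = \E_\P[\varphi(X^{(n)}, K^{(n)}) Z^{(n)}_T] \;\longrightarrow\; \E_\P[\varphi(X, K) Z_T] = \E_\Q[\varphi(X, K)],
\end{equation*}
which is the required weak convergence. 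The delicate point throughout is that $K^{(n)}$ is not directly controlled under $\P$; the Cauchy--Schwartz exchange exploits the explicit Gaussian invariance under $\Q_n$ to transfer control back to $\P$.
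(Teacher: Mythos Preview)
Your overall strategy---work on the Skorokhod coupling, show $Z^{(n)}_T \to Z_T$, then pass to the limit in $\E_\P[\varphi(X^{(n)},K^{(n)})Z^{(n)}_T]$---matches the paper's. There are, however, one genuine gap and one point where the paper's route is considerably shorter.

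\textbf{Gap: convergence of $K^{(n)}$.} Your claim that uniform a.s.\ convergence of $H^{(n)}\to H$ and of $A^{-1}(X^{(n)}_\cdot)\to A^{-1}(X_\cdot)$ already yields
\[
\int_0^\cdot A^{-1}(X^{(n)}_s)\,dH^{(n)}_s \longrightarrow \int_0^\cdot A^{-1}(X_s)\,dH_s
\]
uniformly is not justified: Stieltjes integrals against signed measures are not continuous in the uniform topology of the integrator without control on the total variations $|H^{(n)}|_t$. The paper supplies exactly this missing ingredient: it invokes the bound $\sup_{n\ge 1}\E_{\P_n}\bigl[|H^{(n)}|_t\bigr]<\infty$ from \cite[proof of Theorem~6.1]{PW}, which gives the UT condition, and then appeals to \cite[Theorem~2.2]{KP} to obtain $K^{(n)}\to K$ in probability (and likewise for the stochastic integral against $B^{(n)}$). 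Your argument needs this input.

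\textbf{Different (and longer) route for the final limit.} You go through uniform integrability of $\{Z^{(n)}_T\}_n$ via the Cauchy--Schwarz/Novikov estimate from Theorem~\ref{T:4.3}, then Vitali. This can be made to work, but the ``iterating over consecutive intervals of length $c_2$'' step is delicate because $K^{(n)}$ is not stationary under $\P$. The paper bypasses uniform integrability entirely with a Scheff\'e-type argument: by the proofs of Theorems~\ref{T:2.1} and~\ref{T:4.3}, both $Z$ and $Z^{(n)}$ are genuine $\P$-martingales, so $\E_\P[Z_1]=\E_\P[Z^{(n)}_1]=1$. Once $Z^{(n)}_1\to Z_1$ a.s.\ (along a subsequence), Fatou applied to $\Phi(X^{(n)},K^{(n)})Z^{(n)}_1$ and to $(1-\Phi)(X^{(n)},K^{(n)})Z^{(n)}_1$ gives two inequalities that sum to an equality, forcing each to be an equality. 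This yields the weak convergence in two lines, with no $L^p$ bounds or iteration needed.
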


\begin{proof}
Without loss of generality, we take $T=1$.

Observe that $A(X^{(n)})$ converges to $A(X)$  uniformly on $[0,1]$
and $K^{(n)}_t = \Gamma \, \int_0^t A^{-1}(X^{(n)}_s) dH^{(n)}_s$.
Let $|H^{(n)}|_t$ and $|K^{(n)}|_t$  denote the total variation
process of $H^{(n)}$ and $K^{(n)}$   over the interval $[0, t]$,
respectively. Then there is a constant $c>0$ independent of $n\geq 1$
such that
$$ |K^{(n)}|_t \leq |H^{(n)}|_t \qquad \hbox{for every } t\geq 0.
$$
On the other hand, we know from the proof of Theorem 6.1 in
\cite[p.58-59]{PW} that
$$ \sup_{n\geq 1}
\E_{\P_n}
 \left[ |H^{(n)}|_t\right]<\infty \qquad \hbox{for every } t\geq 0.
$$
Hence by Theorem 2.2 of \cite{KP}, $K^{(n)}$ converges to $K$ in
probability with respect to the uniform topology on $C[0, 1]$.
This
yields, by \cite[Theorem 2.2]{KP} again, that
 $\int_0^1 \sigma^{-1}(X^{(n)}_s) K^{(n)}_s \,dB^{(n)}_s \to \int_0^1
\sigma^{-1}(X_s) K_s \,dB_s$ as $n\to \infty$, in probability. By
passing to a subsequence, we may assume that the convergence is
$\P$-almost sure. We conclude that $\lim_{n\to \infty} Z^{(n)}_1=
Z_1 $, $\P$-a.s.

Let $\Phi$ be a continuous function on $(C[0, 1])^2$ with $0\leq
\Phi\leq 1$. Since $\Phi (X^{(n)}, K^{(n)})\to \Phi(X, K)$,
$\P$-a.s., and $Z^{(n)}_1\to Z_1$, $\P$-a.s.,
 by Fatou's lemma,
\begin{equ}[eqn:51]
\E_\P
 [ \Phi (X, K) Z_1]\leq \liminf_{n\to \infty}
\E_\P
 [ \Phi (X^{(n)}, K^{(n)}) Z^{(n)}_1] \leq \limsup_{n\to \infty}
\E_\P
 [ \Phi (X^{(n)}, K^{(n)}) Z^{(n)}_1]
\end{equ}
and
\begin{equ}[eqn:52]
\E_\P
 [(1- \Phi) (X, K) Z_1]\leq \liminf_{n\to \infty}
\E_\P
 [ (1-\Phi) (X^{(n)}, K^{(n)}) Z^{(n)}_1] .
\end{equ}
Summing \eqref{eqn:51} and \eqref{eqn:52} we obtain $\E_\P[Z_1] \leq
\limsup_{n\to \infty} \E_\P [Z^{(n)}_1]$. Note that by the proof of
Theorem \ref{T:4.3}, $Z^{(n)}$ is a continuous non-negative
$\P$-martingale, while by the proof of Theorem \ref{T:2.1}, $M$ is a
continuous $\P$-martingale. Hence, $\E_\P[Z_1]=1= \E_\P [Z^{(n)}_1]$
and, therefore the inequalities in \eqref{eqn:51} and \eqref{eqn:52}
are in fact equalities. It follows that
 $$ \lim_{n\to \infty} \bQ_n [ \Phi (X^{(n)}, K^{(n)}) ]=
 \lim_{n\to \infty}
 \E_\P
 [ \Phi (X^{(n)}, K^{(n)}) Z^{(n)}_1] =
 \E_\P
 [ \Phi (X, K) Z_1] =\bQ [ \Phi (X, K) ].
 $$
This proves the weak convergence of
 $(X^{(n)}, K^{(n)})$ under $\Q_n$ to $(X, K)$ under $\Q$.
\end{proof}

\begin{theorem}\label{T:wc.3}
Consider the SDE \eqref{e:diffV1R} on a bounded Lipschitz
domain $D\subset \R^d$. A stationary distribution for the
solution $(X, K)$ to \eqref{e:diffV1R} is
$$\pi (dx, dy)= c_1 \1_D(x) \rho (x) e^{-(\Gamma^{-1}y, y)}\, dx\, dy,
$$
where $c_1$ is the normalizing constant so that $\pi (\overline
D\times \R^d)=1$.
\end{theorem}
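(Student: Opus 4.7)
The plan is to pass the stationarity of the smooth-potential approximations $(X^{(n)}, K^{(n)})$ constructed in the proof of Theorem~\ref{T:wc.1} to the limit $(X,K)$ via the weak convergence already established there. From Section~\ref{sec:weakconv}, the initial distribution of $(X^{(n)}_0, K^{(n)}_0)$ under $\Q_n$ is the measure $\pi_n$ displayed in \eqref{defmu}. Rewriting the generator $\LL^{(n)}$ in the form $\tfrac12 e^{\tilde V_n}\nabla(e^{-\tilde V_n} A\nabla)$ with effective potential $\tilde V_n = V_n - \log\rho$, and noting that $e^{-\tilde V_n/2} = \rho^{1/2} e^{-V_n/2}$ lies in $W^{1,2}_0(D)$ since $\rho$ is bounded above and below and $e^{-V_n/2}$ together with its derivatives vanishes on $\partial D$, Theorem~\ref{T:4.3} applies and $\pi_n$ is a stationary distribution for $(X^{(n)}, K^{(n)})$ under $\Q_n$. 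Consequently $(X^{(n)}_t, K^{(n)}_t)\sim \pi_n$ for every $t\ge 0$.

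First I would verify that $\pi_n \Rightarrow \pi$ weakly as probability measures on $\overline D\times \RR^d$. For each fixed $x\in D$ one has $V_n(x)=\exp(1/(n\delta(x)))\searrow 1$, so $\rho(x)e^{-V_n(x)}\1_D(x)\nearrow e^{-1}\rho(x)\1_D(x)$ pointwise. Because $\overline D$ is compact, $\rho$ is bounded, and the Gaussian factor $e^{-(\Gamma^{-1}y,y)}$ is the same for every $n$ and integrable over $\RR^d$, the normalizing constants $c_n$ converge to $e\cdot c_1$, and the bounded convergence theorem yields $\int f\,d\pi_n \to \int f\,d\pi$ for every $f\in C_b(\overline D\times \RR^d)$.

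Next I would invoke Theorem~\ref{T:wc.1}: the laws of $(X^{(n)}, K^{(n)})$ under $\Q_n$ converge weakly in $C([0,T],\RR^{2d})$ to the law of $(X,K)$ under $\Q$. Composing with the continuous evaluation map $\omega\mapsto \omega(t)$ gives $(X^{(n)}_t, K^{(n)}_t)\Rightarrow (X_t, K_t)$ in $\overline D\times \RR^d$ for every $t\in[0,T]$. Combined with $(X^{(n)}_t, K^{(n)}_t)\sim \pi_n$ and $\pi_n\Rightarrow \pi$, uniqueness of weak limits forces $(X_t, K_t)\sim \pi$ for every $t\ge 0$, which is the desired stationarity. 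Since \eqref{e:diffV1R} admits a weakly unique solution by Theorem~\ref{T:2.1}, the stationarity transfers to any weak solution started from $\pi$.

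The step I expect to require the most care is the weak convergence $\pi_n\Rightarrow \pi$ on the non-compact state space $\overline D\times \RR^d$. What makes this go through cleanly is that the Gaussian factor $e^{-(\Gamma^{-1}y,y)}$ appears identically in every $\pi_n$ and is decoupled from the potential $V_n$, so tightness in the $y$-variable is automatic and uniform in $n$, and the normalizing constants $c_n$ stay bounded away from $0$ and $\infty$. Once this is in hand, the rest of the argument is a direct application of the Portmanteau theorem.
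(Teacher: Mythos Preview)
Your proposal is correct and follows essentially the same approach as the paper: use Theorem~\ref{T:4.3} to see that $\pi_n$ is stationary for the approximations, verify $\pi_n\Rightarrow\pi$, and pass to the limit via Theorem~\ref{T:wc.1} and the evaluation map. One minor point: your final sentence invoking Theorem~\ref{T:2.1} to transfer stationarity to ``any weak solution'' is an addition the paper does not make, and strictly speaking Theorem~\ref{T:2.1} is stated for $C^2$ domains while here $D$ is only assumed Lipschitz (cf.\ Remark~\ref{R:2.2}); since the theorem only asserts existence of \emph{a} stationary distribution, this sentence is unnecessary and can be dropped.
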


\begin{proof}
Recall the notation from previous proofs in this section. We start
$(X^{(n)},K^{(n)})$ with the stationary distribution $\pi_n$ of
Theorem \ref{T:4.3}, namely,
$$\Q_n((X^{(n)}_0,K^{(n)}_0)\in A)=\pi_n(A)=
c_n
 \int_A e^{-V_n(x)} \rho (x) e^{-(\Gamma^{-1}y, y)}\, dx\, dy, \qquad A\subset
 \overline{D}\times \R^d,
 $$
where $c_n$ is a normalizing constant. Note that here we have
$e^{-V_n(x)} \rho (x)$ in place of $e^{-V(x)}$ in Theorem
\ref{T:1}. Clearly $\pi_n$ converge weakly on $\overline D\times
\RR^d$ to the probability $\pi$. Also, the $\Q_n$ laws of
$(X^{(n)},B^{(n)},K^{(n)})$ converge weakly to the $\Q$ law of
$(X,B,K)$. If $f$ is any continuous and bounded function on
$\R^{2d}$, then for $t_0\leq 1$,
\begin{align*}
\E_\Q
 f(X_{t_0},K_{t_0})&=\lim_{n\to\infty}
 \E_{\Q_n} f(X^{(n)}_{t_0},K^{(n)}_{t_0})
=\lim_{n\to \infty} \int_{\overline D \times \RR^d} f(x, y) \pi_n
(dx, dy)\\
&=\int_{\overline D \times \RR^d} f(x, y) \pi (dx, dy).
\end{align*}
This shows that the $\pi$  is a stationary distribution for the
solution to \eqref{eqn:1}.
\end{proof}

\section{Irreducibility}
\label{sec:irreducibility}

In this section, $D$ is a bounded $C^2$ domain in $\RR^d$. We will
prove uniqueness of the stationary distribution for $(X,K)$ under
assumptions stronger than those in previous sections. Let
$\Q_{x,y}$ denote the distribution of $(X,K)$ with inert drift
starting from $(x,y)$. We say that $(X,K)$ is irreducible in the
sense of Harris if there exists a positive measure $\mu$ on $\ol D
\times \RR^d$ and $t_0>0$ such that if $\mu (A) > 0$, then for all
$(x,y) \in \ol D\times \RR^d$, $\Q_{x,y} ((X_{t_0},K_{t_0}) \in A)
> 0$.

Let $\Gamma$ be a positive definite constant symmetric $d\times
d$-matrix.

\begin{thm}\label{T:irred}
Assume that $\sigma$ is the identity matrix (consequently $\cn
\equiv \n$), ${\bf b} = {1\over 2} \nabla \log \rho$, and $\rho$
is a $C^2$ function on $\overline D$ that is bounded between two
positive constants. Then the solution $(X,K)$ to \eqref{e:diffV1R}
is irreducible in the sense of Harris.
\end{thm}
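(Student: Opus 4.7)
The plan is to combine the Girsanov reduction from Theorem~\ref{T:2.1} with a control-theoretic reachability argument and an interior-density estimate. By the proof of Theorem~\ref{T:2.1}, on each $\FF_t$ the law $\Q_{x,y}$ is equivalent to $\P_x$, with strictly positive density $M_t$. Under $\P_x$, $X$ is the symmetric reflecting diffusion on $\overline D$ with generator $\frac{1}{2\rho}\nabla(\rho\nabla)$, and $K_t = y + \Gamma I_t$ is a deterministic functional of $X$, where $I_t := \int_0^t \n(X_s)\,dL_s$. Hence Harris irreducibility for $(X,K)$ under $\Q_{x,y}$ with reference measure $\mu$ and time $t_0$ is equivalent to the assertion that $\mu(A)>0$ implies $\P_x((X_{t_0}, y+\Gamma I_{t_0}) \in A) > 0$ for every $(x,y)\in\overline D\times\RR^d$.

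\textbf{Reachability via normals spanning $\RR^d$.} I would choose a ball $B\subset D\times\RR^d$ compactly contained in the interior of $D\times\RR^d$ and take $\mu$ to be Lebesgue measure on $B$. Because $D$ is a bounded $C^2$ domain, the inward normals $\{\n(z):z\in\partial D\}$ positively span $\RR^d$, as already noted in the discussion following \eqref{eqn:2}; since $\Gamma$ is invertible, so do $\{\Gamma\n(z)\}$. For any target $(x^*,y^*)\in B$, one can then write $y^* - y = \sum_{i=1}^\ell \alpha_i \Gamma\n(z_i)$ with $\alpha_i>0$ and $z_i\in\partial D$. Applying a Stroock--Varadhan-type support theorem for reflecting SDEs, I would show that with positive $\P_x$-probability the path of $X$ approximates a prescribed trajectory visiting each $z_i$, accumulating boundary local time approximately $\alpha_i$ there, and finishing near an interior hub $x^*$ at some time $t_0-\delta$, with $y+\Gamma I_{t_0-\delta}\approx y^*$.

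\textbf{Joint density.} To upgrade reachability to Harris irreducibility against a Lebesgue reference measure, I would show that $(X_{t_0}, y+\Gamma I_{t_0})$ admits a jointly absolutely continuous law on $B$ with strictly positive density. After the control phase steers the process close to $(x^*,y^*)$, apply the strong Markov property at time $t_0-\delta$ and observe that on the interior interval $[t_0-\delta,t_0]$ the component $I$ is frozen while $X$ evolves as a uniformly elliptic interior diffusion driven by Brownian motion with the constant drift $K_{t_0-\delta}$; this produces a strictly positive Gaussian-type transition density in the $x$-coordinate on compact subsets of $D$. For the $y$-coordinate the required density at $(x^*, y^*)$ should come from the control phase via a Malliavin-type argument showing that the accumulated local-time map $X\mapsto I_{t_0-\delta}$ has a non-degenerate Malliavin covariance on paths that visit each $z_i$ transversally. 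Convolving these two contributions through the strong Markov property then yields a strictly positive joint density on $B$.

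\textbf{Main obstacle.} The core difficulty is the density step: the generator of $(X,K)$ is genuinely degenerate in the interior of $D$ (where $K$ is constant), so classical H\"ormander hypoellipticity and plain Girsanov arguments do not directly supply a joint density. One must either develop a Malliavin calculus for reflecting SDEs tailored to boundary local-time functionals, or alternatively exploit the smoothed-potential approximations of Section~\ref{sec:weakconv} together with the hypoellipticity estimates underlying Proposition~\ref{prop:unique}, in each case with careful uniformity-in-approximation estimates. Once any such density is obtained, the positivity of the density on $B$ then follows from the reachability step via continuity of the density along the controlled trajectory, and this completes the verification of Harris irreducibility.
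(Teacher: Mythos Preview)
Your overall architecture matches the paper's: Girsanov reduction to reflecting Brownian motion without inert drift, a support/reachability step using that $\{\n(z):z\in\partial D\}$ positively span $\RR^d$, and then a density step for the pair $(X_{t_0},K_{t_0})$. The reachability step is carried out in the paper essentially as you describe (via the deterministic Skorokhod map and the support theorem), and the interior smoothing in the $x$-coordinate is also used just as you propose.

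The gap is exactly where you flag it. You acknowledge the density of $K_{t_0}$ as the ``main obstacle'' and offer two routes---Malliavin calculus for boundary local-time functionals, or uniform-in-$n$ hypoellipticity via the approximations of Section~\ref{sec:weakconv}---but neither is actually carried out, and both would be substantial undertakings in their own right (Malliavin calculus for functionals of local time of a reflecting process is not standard, and the hypoelliptic estimates of Proposition~\ref{prop:unique} degenerate as $n\to\infty$). As written, your proposal is a correct outline with the central lemma left as a black box.

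The paper resolves this step by a completely different, purely probabilistic mechanism that avoids Malliavin calculus. It picks $d$ boundary points $y_1,\dots,y_d$ with mutually orthogonal normals and uses \emph{excursion theory} (exit systems and the Poisson point process of excursions from $\partial D$) to build reflecting Brownian motion out of $d$ independent pieces $Y^1,\dots,Y^d$, each conditioned to hit $\partial D$ only near one $y_j$ before reaching an interior ball. The key observation is that, conditional on the excursion structure, the amount of local time $S_j$ accumulated near $y_j$ before the first ``escaping'' excursion is the first point of a Poisson process whose rate is bounded between two positive constants; hence each $S_j$ has a density bounded below on $[0,1]$, and the $S_j$ are independent. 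Since $K$ is then essentially $\sum_j S_j\,\Gamma\n(y_j)$ with the $\Gamma\n(y_j)$ linearly independent, this immediately gives $K$ a component with a Lebesgue density on a ball. This is the idea you are missing: the randomness producing the density in $K$ comes not from the Brownian noise along a controlled path, but from the exponential clock in the excursion decomposition governing when the process leaves each boundary patch.
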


\begin{proof}
We first assume $\rho \equiv 1$ on $\overline D$. Let $X$ denote a
solution to \eqref{eqn:2.2}, i.e., the usual normally reflecting
Brownian motion in $D$, let $L$ be the local time of $X$ on $\prt
D$, and $K_t=y_0+\int_0^s \n (X_s) \,dL_s$. The distribution of
$(X,K)$ starting from $(X_0,K_0) = (x,y)$ will be denoted
$\P_{x,y}$. First, we will prove Harris irreducibility for $(X,K)$
under $\P_{x,y}$, with respect to $2d$-dimensional Lebesgue
measure.

In order to do so,  our main ingredient will be that for $t>0$ the
law of $(X_t, K_t)$ has a component that has a strictly positive
density with respect to Lebesgue measure on some open set. The idea
behind the proof of this result is that one can find $d$ small balls
$\{B_j\}_{j=1}^d$ on the boundary $\partial D$ such that
\begin{itemize}
 \item[(i)]
The normal vector $\n(x)$ is `almost' parallel to the $j$-th unit vector $e_j$
for $x \in B_j$.
\item[(ii)] With positive probability, the process $X_s$ has
    visited all of the $B_j$'s in chronological order before time
    $t$, but has not hit the boundary $\prt D$ anywhere outside
    of the $B_j$'s.
\item[(iii)] The amounts $S_j$ of local time that $X$ has accumulated
on the $B_j$'s are `almost' independent.
\end{itemize}
This suggests that at time $t$, the law of $K_t$ has a positive
component which `almost' looks like the law of a random vector with
independent components, each of them having a density with respect to
Lebesgue measure. It follows that the law of $K_t$ has a component
which has a density with respect to $d$-dimensional Lebesgue measure.
Since, as long as $X$ is in the interior of the domain $D$, it is
just Brownian motion with drift and $K$ remains constant, we conclude
that the law of $(X_t, K_t)$ has a density with respect to
$2d$-dimensional Lebesgue measure.

The detailed proof is broken into several distinct steps. In the
first step, we use a support theorem and a controllability
argument to show that given any point $z_1 \in D$, $\eps >0$ and
any final time $t_0$, the process $(X_t, K_t)$ has a positive
probability to be in an $\eps$-neighborhood of $(z_1, 0)$ after
time $t_0/2$, whatever its initial condition. In the second step,
we present a review of excursion theory and show how the path of a
reflecting Brownian motion can be decomposed into a collection of
excursions and how reflecting Brownian motion up to the first
hitting time of some subset $U \subset D$ can be constructed from
the excursions of a reflecting Brownian motion conditioned never
to hit $U$ by adding a `last excursion' after a suitably chosen
amount of local time spent at the boundary. This construction is
then used in the third step to `stitch together' a reflecting
Brownian motion from $d$ independent reflecting Brownian motions
$Y_t^j$. In the final step, we show how to condition each of the
$Y^j$'s on hitting the boundary $\prt D$ only in $B_j$ and deduce
from this that the local time $K_t$ has a density with respect to
Lebesgue measure. We conclude by showing how to combine these
results to obtain the desired Harris irreducibility.

\medskip
\noindent{\it Step 1}. Fix any $t_0,r >0$ and $z_1\in D$. In
this step, we will show that for any $(x_0,y_0) \in \ol D\times
\RR^d$ there exists $p_1>0$ such that $\P_{x_0,y_0}
((X_{t_0/2},K_{t_0/2}) \in \BB(z_1,r)\times \BB(0,r)) \geq
p_1$.

We recall the deterministic Skorokhod problem in
$D$ with normal vector of reflection. Suppose a continuous
function $f: [0,T] \to \RR^d$ is such that $f(0) \in \ol D$.
Then the Skorokhod problem is to find a continuous function $g:
[0,T] \to \ol D$ and a non-decreasing function $\ell: [0,T] \to
[0,\infty)$, such that $\ell(0) = 0$, $g(0) = f(0)$, $\int_0^T
\1_D (g(s)) \,d\ell_s =0$, and $g(t) = f(t) + \int_0^t \n(g(s))
\,d\ell_s$. It has been proved in \cite{LS} that the Skorokhod
problem has a unique solution $(g,\ell)$ in every $C^2$ domain.

Since $D$ is a bounded smooth domain, the set $\{\n(x)/|\n(x)|,
x\in \prt D\}$ is the whole unit sphere in $\RR^d$. Find $x_1 \in
\prt D$ such that $\n(x_1) = - c_0 y_0$ for some $c_0 >0$. It is
elementary to construct a continuous function $f:[0,t_0/2] \to
\RR^d$ such that $f(0) = x_0$, $f(t) \in D$ for $t\in (0,t_0/4)$,
$f(t_0/4) = x_1$, $f$ is linear on $[t_0/4, 3 t_0/8]$, $f(3t_0/8)
= x_1 +y_0$, $f(t) - y_0\in D$ for $t\in (3t_0/8,t_0/2)$, and
$f(t_0/2) -y_0 = z_1$. It is straightforward to check that the
pair $(g,\ell)$ that solves the Skorokhod problem for $f$ has the
following properties: $g(t) = f(t)$ for $t\in (0,t_0/4)$, $g(t) =
x_1$ for $t\in [t_0/4, 3 t_0/8]$, $g(t) = f(t) - y_0$ for $t\in
(3t_0/8,t_0/2)$, and $\int_0^{t_0/2} \n(g(s)) \,d\ell_s = -y_0$.

For a function $f^1:[0,t_0/2] \to \RR^d$ with $f^1(0) \in \ol
D$, let $(g^1, \ell^1)$ denote the solution of the Skorokhod
problem for $f^1$. Let $\BB_C(f,\delta)$ be the ball in
$C([0,t_0/2], \RR^d)$ centered at $f$, with radius $\delta$, in
the supremum norm. By Theorem 2.1 and Remark 2.1 of \cite{LS},
for any $r>0$ there exists $\delta\in (0,r/2)$, such that if
$f^1 \in \BB_C(f,\delta)$, then $g^1 \in \BB_C(g,r/2)$. This
implies that
\begin{align*}
 \sup_{t\in[0,t_0/2]} \Big|\int_0^{t_0/2} \n(g^1(s)) \,d\ell^1_s
 &- \int_0^{t_0/2} \n(g(s)) \,d\ell_s \Big|\\
 &\leq \sup_{t\in[0,t_0/2]} (|f^1(t) - f(t)| + |g^1(t) - g(t)|)\\
 &\leq \delta + r/2 \leq r.
\end{align*}
Thus, if $f^1 \in \BB_C(f,\delta)$, then
$\left(g^1(t_0/2),\int_0^{t_0/2} \n(g^1(s)) \,d\ell^1_s\right)
\in \BB(z_1,r)\times \BB(-y_0,r) $. Let $\widetilde \P_x$
denote the distribution of standard Brownian motion. By the
support theorem for Brownian motion, $\widetilde \P_{x_0}
(\BB_C(f,\delta)) > p_1$, for some $p_1>0$. Hence,
$\P_{x_0,y_0} ((X_{t_0/2},K_{t_0/2}) \in \BB(z_1,r)\times
\BB(0,r))> p_1>0$.

\medskip
\noindent{\it Step 2}. This step is mostly a review of the
excursion theory needed in the rest of the argument. See, e.g.,
\cite{M} for the foundations of excursion theory in abstract
settings and \cite{B} for the special case of excursions of
Brownian motion. See also \cite{H} for excursions of reflecting
Brownian motion on $C^3$ domains. Although \cite{B} does
not discuss reflecting Brownian motion, all the results we need
from that book readily apply in the present context. We will use
two different but closely related exit systems. The first one
represents excursions of reflecting Brownian motion from $\prt D$.

We consider $X$ under a probability measure $\P_x$,
i.e., $X$ denotes reflecting Brownian motion without inert
drift.

An exit system for excursions of reflecting Brownian motion $X$
from $\prt D$ is a pair $(L^*_t, {\bf H}_x )$ consisting of a
positive continuous additive functional $L^*_t$ and a family of
excursion laws $\{{\bf H}_x\}_{x\in\prt D}$. We will soon show
that $L^*_t = L_t$. Let $\Delta$ denote a ``cemetery'' point
outside $\RR^d$ and let ${\cal C}$ be the space of all functions
$f:[0,\infty) \to \RR^d\cup\{\Delta\}$ which are continuous and
take values in $\RR^d$ on some interval $[0,\zeta)$, and are equal
to $\Delta$ on $[\zeta,\infty)$. For $x\in \prt D$, the excursion
law ${\bf H}_x$ is a $\sigma$-finite (positive) measure on $\cal
C$, such that the canonical process is strong Markov on
$(t_0,\infty)$ for every $t_0>0$, with the transition
probabilities of Brownian motion killed upon hitting $\prt D$.
Moreover, ${\bf H}_x$ gives zero mass to paths which do not start
from $x$. We will be concerned only with ``standard'' excursion
laws; see Definition 3.2 of \cite{B}. For every $x\in \prt D$
there exists a standard excursion law ${\bf H}_x$ in $D$, unique
up to a multiplicative constant.

Excursions of $X$ from $\prt D$ will be denoted $e$ or $e_s$,
i.e., if $s< u$, $X_s,X_u\in\prt D$, and $X_t \notin \prt D$ for
$t\in(s,u)$, then $e_s = \{e_s(t) = X_{t+s} ,\, t\in[0,u-s)\}$ and
$\zeta(e_s) = u -s$. By convention, $e_s(t) = \Delta$ for $t\geq
\zeta$. So $e_t \equiv \Delta$ if $\inf\{r> t: X_r \in \prt D\} =
t$. Let ${\cal E}_u = \{e_s: s \leq u\}$.

Let $\sigma_t = \inf\{s\geq 0: L^*_s \geq t\}$ and let $I$ be the
set of left endpoints of all connected open components of $(0,
\infty)\setminus \{t\geq 0: X_t\in \partial D\}$. The following is
a special case of the exit system formula of \cite{M}. For every
$x\in \ol D$,
 \begin{equation}\label{es1}
 \E_x \left[ \sum_{t\in I} Z_t \cdot f ( e_t) \right]
 = \E_x \left[ \int_0^\infty Z_{\sigma_s}
 {\bf H}_{X(\sigma_s)}(f) \,ds \right]
 = \E_x \left[ \int_0^\infty Z_t {\bf H}_{X_t}(f) \,dL^*_t \right],
 \end{equation}
where $Z_t$ is a predictable process and $f:\, {\cal
C}\to[0,\infty)$ is a universally measurable function which
vanishes on those excursions $e_t$ identically equal to $\Delta$.
Here and elsewhere ${\bf H}_x(f) = \int_{\cal C} f d{\bf H}_x$.

The normalization of the exit system is somewhat arbitrary, for
example, if $(L^*_t, {\bf H}_x)$ is an exit system and
$c\in(0,\infty)$ is a constant then $(cL^*_t, (1/c){\bf H}_x)$ is
also an exit system. Let $\P_y^D$ denote the distribution of
Brownian motion starting from $y$ and killed upon exiting $D$.
Theorem 7.2 of \cite{B} shows how to choose a ``canonical''
exit system; that theorem is stated for the usual planar
Brownian motion but it is easy to check that both the statement
and the proof apply to reflecting Brownian motion in $D
\subset \RR^d$. According to that result, we can take $L^*_t$
to be the continuous additive functional whose Revuz measure is
a constant multiple of the surface area measure $dx$ on $\prt
D$ and the ${\bf H}_x$'s to be standard excursion laws normalized so that
for some constant $c_1\in(0,\infty)$,
 \begin{equation}\label{es2}
 {\bf H}_x (A) = c_1 \lim_{\delta\downarrow 0} {\frac1\delta} \P^D_{x +
 \delta\n(x)} (A)
 \end{equation}
for any event $A$ in a $\sigma$-field generated by the process
on an interval $[t_0,\infty)$ for any $t_0>0$. The Revuz
measure of $L$ is $c_2 \,dx$ on $\prt D$. We choose $c_1$ so that
$(L_t, {\bf H}_x)$ is an exit system.

We will now discuss another exit system, for a different process
$X'$. Let $U \subset D$ be a fixed closed ball with positive
radius, and let $X'$ be the process $X$ conditioned by the event
$\{T^X_U > \sigma_1\}$, where $T^X_U = \inf\{t>0: X\in U\}$. One
can show using Theorem 2.1 and Remark 2.1 of \cite{LS} that for
any starting point in $D \setminus U$, the probability of $\{T^X_U
> \sigma_1\}$ is greater than 0. It is easy to see that $(X'_t,
L_t)$ is a time-homogeneous Markov process. For notational
consistency, we will write $(X'_t, L'_t)$ instead of $(X'_t,
L_t)$.

We will now describe an exit system $(L'_t, {\bf H}'_{x,\ell})$
for $(X'_t, L'_t)$ from the closed set $\ol D \times [0, \infty)$.
We will construct this exit system on the basis of $(L_t, {\bf
H}_x)$ because of the way that $X'$ has been defined in relation
to $X$. It is clear that $L'$ does not change within any excursion
interval of $X'$ away from $\prt D$, so we will assume that ${\bf
H}'_{x,\ell}$ is a measure on paths representing $X'$ only. The
local time $L'$ is the continuous additive functional with Revuz
measure $c_2 \,dx$ on $\prt D$. For $\ell \geq 1$ we let ${\bf
H}'_{x,\ell} = {\bf H}_x$. Let $\widehat \P^D_y$ denote the
distribution of Brownian motion starting from $y\in D\setminus U$,
conditioned to hit $\prt D$ before hitting $U$, and killed upon
exiting $D$. For $\ell < 1$, we have
 \begin{equation}\label{es3}
 {\bf H}'_{x,\ell} (A) = c_1 \lim_{\delta\downarrow 0} {\frac 1 \delta}
 \widehat \P^D_{x + \delta\n(x)} (A).
 \end{equation}

Let $A_* \subset {\cal C}$ be the event that the path hits $U$.
It follows from (\ref{es2}) and (\ref{es3}) that
for $\ell < 1$,
 \begin{equation}\label{es4}
 {\bf H}'_{x,\ell}(A) = {\bf H}_x(A \setminus A_*).
 \end{equation}
One can deduce easily from (\ref{es2}) and standard estimates
for Brownian motion that for some $c_3,c_4 \in (0,\infty)$ and
all $x\in \prt D$,
 \begin{equation}\label{es5}
 c_3 < {\bf H}_x (A_*) < c_4.
 \end{equation}
Let $\sigma'_t = \inf\{s\geq 0: L'_s \geq t\}$. The exit system
formula (\ref{es1}) and (\ref{es4}) imply that we can construct
$X$ (on a random interval, to be specified below) using $X'$ as a
building block, in the following way. Suppose that $X'$ is given.
We enlarge the probability space, if necessary, and construct a
Poisson point process $\EE$ with state space $[0,\infty) \times
{\cal C}$ whose intensity measure conditional on the whole
trajectory $\{X'_t, t\geq 0\}$ is given by
\begin{equation}\label{e:mu}
 \mu([s_1, s_2] \times F)
 = \int_{1\land s_1}^{1\land s_2}
 {\bf H}_{X'_{\sigma'_t}}(F \cap A_*) \,dt.
 \end{equation}
Since $\mu([0,\infty) \times {\cal C}) < \infty$, the Poisson
point process $\EE$ may be empty; that is, if the Poisson process
is viewed as a random measure, then the support of that measure
may be empty. Consider the case when it is not empty and let $S_1$
be the minimum of the first coordinates of points in $\EE$. Note
that there can be only one point $(S_1,e_{S_1})\in \EE$ with first
coordinate $S_1$, because of (\ref{es5}). By convention, let $S_1
=\infty$ if $\EE = \emptyset$. Recall that $T^X_U = \inf\{t>0: X_t
\in U\}$ and let
\begin{equs}
 T^{X'}_U &= \inf\{t>0: X'_t \in U\}, \\
 T_* &= \sigma'_{S_1} + \inf\{t>0: e_{S_1} (t) \in U\}.
\end{equs}
It follows from the exit system formula (\ref{es1}) that the
distribution of the process
 $$
 \widehat X_t=
 \begin{cases}
 X'_t & \hbox{if  } 0 \leq t \leq
 T^{X'}_U \land \sigma'_{S_1},  \\
 e_{S_1}(t -\sigma'_{S_1}) & \hbox {if  }
 \EE \ne \emptyset \hbox{  and  }
 \sigma'_{S_1} < t \leq T_*,
 \end{cases}
 $$
is the same as the distribution of $\{X_t, 0 \leq t \leq
T^X_U\}$.

\medskip
\noindent{\it Step 3}. We will now construct reflecting Brownian
motion in $D$ from several trajectories, including a
family of independent paths.

Let $U_j = \ol{ \BB(z_j, r)}$ for $j=1, \dots, d+1$, where $z_j
\in D$ and $r>0$ are chosen so that $U_j \cap U_k = \emptyset$ for
$j\ne k$, and $\bigcup _{1\leq j \leq d+1} U_j \subset D$.

Recall from the last step how the process $X$ was constructed from
a process $X'$. Fix some $x_1 \in U_1$ and let $X^1$ be a process
starting from $X^1_0 = x_1$, with the same transition
probabilities as $X'$, relative to $U_2$. We then construct $Y^1$
based on $X^1$, by adding an excursion that hits $U_2$, in the
same way as $X$ was constructed from $X'$. We thus obtain a
process $\{Y^1_t, 0 \leq t \leq T_1\}$, where $T_1 = \inf\{t>0 :
Y^1_t \in U_2\}$, whose distribution is that of reflecting
Brownian motion in $D$ starting with the uniform distribution on
$U_j$, observed until the first hit of $U_2$.

We next construct a family of independent reflecting Brownian
motions $\{Y^j\}_{1\leq j \leq d}$. For a fixed $j=2, \dots , d$,
we let $X^j$ be a process with the same transition probabilities
as $X'$, relative to $U_{j+1}$, and initial distribution uniform
in $U_j$. We then construct $Y^j$ based on $X^j$, by adding an
excursion that hits $U_{j+1}$, in the same way as $X$ was
constructed from $X'$. We thus obtain a process $\{Y^j_t, 0 \leq t
\leq T_j\}$, where $T_j = \inf\{t>0 : Y^1_t \in U_{j+1}\}$, whose
distribution is that of reflecting Brownian motion in $D$,
observed until the first hit of $U_{j+1}$.

Note that for some $c_5>0$ and all $x,y\in U_{j+1}$, $j=1,
\dots, d$,
 $$\P_x(X_1 \in dy \hbox{ and }  X_t \notin \prt D
 \hbox{ for  } t\in[0,1] )
 \geq c_5 \,dy.
 $$
We can assume that all $X^j$'s and $Y^j$'s are defined on the
same probability space. The last formula and standard coupling
techniques show that on an enlarged probability space, there
exist reflecting Brownian motions $Z^j$, $j=1, \dots, d$, with
the following properties. For $1 \leq j \leq d-1$, $Z^j_0 =
Y^j_{T_j}$, and for some $c_6>0$,
 \begin{equation}\label{es6}
 \P \left( Z^j_1 = Y^{j+1}_0 \hbox{ and } Z^j_t \notin \prt D
 \hbox{ for  } t\in[0,1] \, \Big| \, \{Y^k\}_{1 \leq k \leq j},
 \{Z^k\}_{ 1\leq k \leq j-1}\right)
 \geq c_6.
 \end{equation}
The process $Z^j$ does not depend otherwise on $ \{Y^k\}_{1
\leq k \leq d}$ and $ \{Z^k\}_{ k \ne j}$. We define $Z^d$ as a
reflecting Brownian motion in $D$ with $Z^d_0 = Y^d_{T_d}$ but
otherwise independent of $ \{Y^k\}_{1 \leq k \leq d}$ and $
\{Z^k\}_{ 1\leq k \leq d-1}$.

Let
$$F_j = \left\{Z^j_1 = Y^{j+1}_0 \hbox{ and }
  Z^j_t \notin \prt D \hbox{ for
} t\in[0,1] \right\}.
$$
We define a process $X^*$ as follows. We let
$X^*_t = Y^1_t$ for $0\leq t \leq T_1$. If $F_1^c$ holds, then
we let $X^*_t = Z^1_{t - T_1}$ for $t \geq T_1$. If $F_1$ holds,
then we let $X^*_t = Z^1_{t - T_1}$ for $t \in [T_1, T_1 + 1]$
and $X^*_t = Y^2_{t - T_1 -1}$ for $t\in [T_1+1, T_1 + 1 +
T_2]$. We proceed by induction. Suppose that $X^*_t$ has been
defined so far only for
 $$t\in [0, T_1 + 1 + T_2 + 1 + \dots + T_k],
 $$
for some $k< d$. If $F_k^c$ holds, then we let
 $$X^*_t = Z^k_{t - T_1-1 - T_2 - 1- \dots - T_k}
 $$
for $t \geq T_1 + 1 + T_2 + 1 + \dots + T_k$. If $F_k $ holds,
then we let
 $$X^*_t = Z^k_{t - T_1-1 - T_2 - 1-\dots -T_k}
 $$
for $t \in [T_1 + 1 + T_2 + 1 + \dots + T_k, T_1 + 1 + T_2 + 1
+ \dots + T_k + 1]$ and
 $$X^*_t = Y^{k+1}_{t - T_1 -1 - T_2 -1-\dots - T_k -1}
 $$
for $t\in [T_1 + 1 + T_2 + 1 + \dots + T_k +1, T_1 + 1 + T_2 +
1 + \dots + T_k + 1 + T_{k+1}]$. We let
 $$X^*_t = Z^d_{t - T_1-1- T_2 - 1-\dots - T_d}
 $$
for $t \geq T_1 + 1 + T_2 + 1 + \dots + T_d$.

By construction, $X^*$ is a reflecting Brownian motion in $D$
starting from $x_1$. Note that in view of (\ref{es6}),
conditional on $\{ X^j_t, t\geq 0\}$, $ j = 1 , \dots , d$,
there is at least probability $c_6^d$ that $X^*$ is a
time-shifted path of $ X^j_t$ on an appropriate interval, for
all $ j = 1 , \dots , d$.

\medskip
\noindent{\it Step 4}. In this step, we will show that with a
positive probability, the process $K$ can have ``almost''
independent and ``almost'' perpendicular increments over
disjoint time intervals. Moreover, the distributions of the
increments have densities in an appropriate sense.

We find $d$ points $y_1, \dots, y_d \in \prt D$ such that
the $\n(y_j)$'s point in $d$ orthogonal directions. Let $C_j = \{\z
\in \RR^d: \angle(\n(y_j), \z) \leq \delta_0\}$, for some
$\delta_0>0$ so small that for any $\z_j \in C_j$, $j=1, \dots,
d$, the vectors $\{\z_j\}$ are linearly independent. Let
$\delta_1>0$ be so small that for every $j=1, \dots, d$, and
any $y \in \prt D \cap \BB(y_j, \delta_1)$, we have $\n(y) \in
C_j$.

Let $L^j$ be the local time of $X^j$ on $\prt D$ and $\sigma^j_t =
\inf\{s\geq 0: L^j_s \geq t\}$. It is easy to see that there
exists $p_2>0$ such that with probability greater than $p_2$, for
every $j = 1, \dots, d$, we have $X^j_t \notin \prt D \setminus
\BB(y_j, \delta_1)$, for $t\in [0, \sigma^j_1]$. Let
$$ R_j = \sup\{t< T_j: Y^j_t \in \prt D\} \qquad \hbox{and} \qquad
S_j = L^j_{R_j}.
$$
Let $F_*$ be the event that for every $j = 1, \dots, d$, $X^j_t
\notin \prt D \setminus \BB(y_j, \delta_1)$ for $t\in [0,
\sigma^j_1]$ and $S_j < \sigma^j_1$. Then (\ref{es5}) shows that
$\P_{x_1}(F_*) \geq p_2 (1- e^{-c_3})^d$.

Let $K^j_t = \Gamma \, \int_0^t \n(X^j_{\sigma^j_s}) \,ds$ and
note that if $F_*$ holds, then $K^j_t \in \Gamma \, C_j $ for all
$j=1, \dots, d$ and $t\in [0,1]$. Define for any $0\leq
a_k<b_k\leq k$ for $k=1, \cdots, d$,
 $$\Lambda([a_1,b_1], [a_2, b_2], \dots, [a_d,b_d])
 = \{ K^1_{t_1} + K^2_{t_2} + \dots + K^d_{t_d} :
 t_k \in [a_k, b_k] \hbox{  for  } 1\leq k \leq d \}.
 $$
It is easy to show using the definition of $C_j$'s that the
$d$-dimensional volume of $\Lambda([a_1,b_1], \dots,
[a_d,b_d])$ is bounded below by $c_7 \prod_{1\leq k \leq d}
(b_k - a_k)$, and bounded above by $c_8 \prod_{1\leq k \leq d}
(b_k - a_k)$.

Let us consider the processes defined above, conditioned on the
$\sigma$-field
 $${\cal G} =\sigma\left(\{K^j_t, t\in [0,1]\}_{1
 \leq j \leq d}\right).
 $$
By \eqref{es5} and \eqref{e:mu}, conditional on $\cal G$, the
random variables $S_j=L^j_{R_j}$, $j=1, \cdots, d$, have
distributions whose densities on $[0,1]$ are bounded below. In
view of our remarks on the volume of $\Lambda$, it follows that
conditional on $\cal G$, the vector $K^1_{S_1} + \dots +
K^d_{S_d}$ has a density with respect to $d$-dimensional Lebesgue
measure that is  bounded below by $c_9>0$ on a ball $U_*$ with
positive radius. We now remove the conditioning to conclude that
$K^1_{S_1} + \dots + K^d_{S_d}$ has a component with a density
with respect to $d$-dimensional Lebesgue measure that is  bounded
below on $U_*$.

Define $K^*_t = \Gamma \, \int_0^t \n(X^*s ) d\wt L_s$ and $T_* =
\sum_{j=1}^d T_j$, where $\wt L$ is the boundary local time for
reflecting Brownian motion $X^*$. Using conditioning on $F_*$, we
see that the distribution of $K^*_{T_*}$ has a component with
density greater than $c_9$ on $U_*$. Since $K^*$ does not change
when $X^*$ is inside the domain and $X^*$ is a reflecting Brownian
motion, we conclude that $(X^*_{T_* +1}, K^*_{T_* +1})$ has a
component with a density with respect to $2d$-dimensional Lebesgue
measure on a non-empty open set. It follows that for some fixed
$t_*>0$, $(X^*_{t_* }, K^*_{t_* })$ has a component with a density
with respect to $2d$-dimensional Lebesgue measure on a non-empty
open set.

We leave it to the reader to check that the argument can be easily
modified so that we can show that for any fixed $t_0>0$, $(X^*_{t_0/2
}, K^*_{t_0/2 })$ has a component with a strictly positive density
with respect to $2d$-dimensional Lebesgue measure on a non-empty open
set. We can now combine this with the result of Step 1 using the
Markov property to see that for some non-empty open set $\widetilde
U$ and any starting point $(X_0, K_0) = (x_0,y_0)$, the process
$(X_{t_0 }, K_{t_0 })$ has a positive density with respect to
$2d$-dimensional Lebesgue measure on $\widetilde U$ under
$\P_{x_0,y_0}$.

By the Girsanov theorem the same conclusion holds for $(X,K)$
under the measure $\Q_{x_0,y_0}$, which, by the proof of Theorem
\ref{T:2.1}, is the reflecting diffusion with inert drift. This
proves the theorem in the case $\rho \equiv  1$.

Now let us consider the general case when $\rho \in C^2(\overline D)$
is bounded between two positive constants. In this case, $(X, K)$ is
a solution to the following special case of \eqref{e:diffV1R}:
\begin{equ}[eq:137]
 \begin{cases}
dX_t=dB_t + \frac12 \nabla \log \rho (X_t) \,dt + \n (X_t) \,dt +K_t \,dt, \\
dK_t =\Gamma \,  \n (X_t) \,dL_t.
\end{cases}
\end{equ}
Let $\{(X, K), \, \P\}$ denote the solution to \eqref{eq:137},
and let $\{(X^0, K^0), \, \P^0\}$ correspond to the special
case $\rho \equiv 1$, with the same initial distribution. Note
that our assumptions on $\rho$ imply that $\nabla \log \rho \in
C^1(\overline D)$. Hence $\P$ and $\P^0$ are related by
Girsanov transform
$$ \frac{ d \P}{d\P^0} = \exp \left(\frac12  \int_0^t \nabla \log \rho (X^0_s)\,dB_s
 -\frac18 \int_0^2 | \nabla \log \rho (X_s^0)  |^2 \,ds \right) \qquad
 \hbox{on } \FF_t.
 $$
The Harris irreducibility of $(X, K)$ now follows from that for
$(X^0, K^0)$.
 \end{proof}

\begin{theorem}\label{T:irr.1}
Consider the SDE \eqref{e:diffV1R} with $\sigma$ being the identity
matrix, $\bfb = {1\over 2} \log \rho$, $\cn \equiv \n$ and $\rho \in
C^2(\overline D)$ being bounded between two positive constants. The
probability distribution $\pi(dx, dy)$ defined by
 $$\pi(A) =\int_A c_1
  \1_D (x) \rho (x) e^{-(\Gamma^{-1}y, y)} \, dx\, dy,
 \qquad A\subset \overline{D}\times \R^d,
 $$
is the only invariant measure for the solution $(X,K)$ to
\eqref{e:diffV1R}.
\end{theorem}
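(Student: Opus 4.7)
The plan is to deduce Theorem \ref{T:irr.1} by combining two results already established in the paper. Existence of $\pi$ as a stationary distribution follows immediately from Theorem \ref{T:wc.3}: the present hypotheses ($\sigma = I$, $\bfb = \frac12 \nabla\log\rho$, $\cn = \n$, and $\rho \in C^2(\overline D)$ bounded between two positive constants) form a special case of those assumed there, and with $A = I$ one has $b_k = \frac{1}{2\rho}\partial_k\rho = \frac12\partial_k\log\rho$, matching the hypothesis exactly. Uniqueness will then follow from the Harris irreducibility proved in Theorem \ref{T:irred}, through a standard "strict contraction in total variation" argument which I now sketch.

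Let $\{P_t\}$ denote the transition semigroup of $(X,K)$ on $\overline D \times \RR^d$, and let $t_0 > 0$ together with the nontrivial positive reference measure $\mu$ be those provided by Theorem \ref{T:irred}, so that $P_{t_0}(z; A) > 0$ for every $z \in \overline D \times \RR^d$ whenever $\mu(A) > 0$. Suppose, for contradiction, that $\pi'$ is a second invariant probability measure distinct from $\pi$, and set $\nu := \pi - \pi'$. By the Hahn--Jordan decomposition, $\nu = \nu_+ - \nu_-$, where $\nu_+$ and $\nu_-$ are mutually singular finite positive measures carried by disjoint Borel sets $E_+, E_-$ partitioning $\overline D \times \RR^d$, with common total mass $\alpha := \nu_+(\overline D \times \RR^d) = \nu_-(\overline D \times \RR^d) > 0$ (equal since $\nu$ is the difference of two probabilities).

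The crucial step is to show that the pushforwards $\nu_+ P_{t_0}$ and $\nu_- P_{t_0}$ are \emph{not} mutually singular. Assume toward contradiction that they were, with disjoint supports $F_+, F_-$. Since $\mu$ is nontrivial, at least one of $\mu(F_+), \mu(F_-)$ is strictly positive; say $\mu(F_-) > 0$. Harris irreducibility then forces $P_{t_0}(z; F_-) > 0$ for \emph{every} $z \in \overline D \times \RR^d$, and integrating against the nonzero positive measure $\nu_+$ yields
\begin{equation*}
\nu_+ P_{t_0}(F_-) \;=\; \int P_{t_0}(z; F_-)\,\nu_+(dz) \;>\; 0,
\end{equation*}
contradicting the assumption that $\nu_+ P_{t_0}$ is supported on $F_+$; the case $\mu(F_+) > 0$ is identical. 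Therefore $\nu_+ P_{t_0}$ and $\nu_- P_{t_0}$ must overlap, which gives the strict inequality $\|\nu P_{t_0}\|_{TV} = \|\nu_+ P_{t_0} - \nu_- P_{t_0}\|_{TV} < 2\alpha = \|\nu\|_{TV}$. On the other hand, invariance of $\pi$ and $\pi'$ gives $\nu P_{t_0} = \nu$, so $\|\nu P_{t_0}\|_{TV} = \|\nu\|_{TV}$, a contradiction. Hence $\pi' = \pi$.

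Since the two genuinely substantial ingredients (existence in Theorem \ref{T:wc.3} and the delicate Harris irreducibility in Theorem \ref{T:irred}) are already established, there is no real obstacle in the present theorem. The only feature requiring a moment's care is that the ``uniform-in-starting-point'' form of Harris irreducibility used in the paper --- $P_{t_0}(z; A) > 0$ for \emph{every} $z$ whenever $\mu(A) > 0$ --- is precisely what allows the Hahn-decomposition argument to close \emph{without} needing any preliminary absolute-continuity statement relating invariant measures to $\mu$ (as would be required under the weaker $\mu$-irreducibility hypothesis usually found in the Meyn--Tweedie framework).
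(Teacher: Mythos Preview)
Your proof is correct and follows the same overall architecture as the paper: existence from Theorem~\ref{T:wc.3}, uniqueness from the Harris irreducibility of Theorem~\ref{T:irred}. The uniqueness step, however, is argued differently. The paper invokes Birkhoff's ergodic theorem to say that if there were more than one invariant measure then two of them, say $\mu$ and $\nu$, would be mutually singular; it then observes that Harris irreducibility produces a nontrivial measure $\psi$ with $\psi\ll\mu$ and $\psi\ll\nu$, forcing $\psi=0$, a contradiction. Your route instead takes the Hahn decomposition of the signed measure $\pi-\pi'$ and shows directly, via the uniform positivity $P_{t_0}(z;A)>0$ whenever $\mu(A)>0$, that the pushforwards $\nu_+P_{t_0}$ and $\nu_-P_{t_0}$ cannot be mutually singular, yielding a strict total-variation contraction that contradicts invariance. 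Your argument is slightly more self-contained in that it avoids the appeal to ergodic theory, while the paper's version is a bit shorter once one is willing to quote Birkhoff. Both exploit exactly the same feature of Theorem~\ref{T:irred}, namely that the irreducibility holds for \emph{every} starting point, not merely $\mu$-almost every one.
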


\begin{proof}
In view of Theorem \ref{T:wc.3}, all we have to show is
uniqueness. If there were more than one invariant measure, at
least two of them (say, $\mu$ and $\nu$) would be mutually
singular by Birkhoff's ergodic theorem \cite{Sinai}. However,
we have just shown that there exists a strictly positive
measure $\psi$ which is absolutely continuous with respect to
any transition probability, so that in particular, $\psi \ll
\mu$ and $\psi \ll \nu$. Since $\mu \perp \nu$ by assumption, there exists a
set $A$ such that $\mu(A) = 0$ and $\nu(A^c) = 0$. Therefore,
one must have $\psi(A) = \psi(A^c) = 0$ which contradicts the
fact that the measure $\psi$ is non-zero.
\end{proof}

\end{doublespace}

\vskip 0.6truein

\noindent {\bf Richard F. Bass:}

Department of Mathematics, University of Connecticut,  Storrs, CT
06269-3009, USA.

 Email: \texttt{bass@math.uconn.edu}

\bigskip

\noindent{\bf Krzysztof Burdzy:}

Department of Mathematics, University of Washington,  Seattle, WA
98195, USA.

 Email: \texttt{burdzy@math.washington.edu}

\bigskip

\noindent{\bf Zhen-Qing Chen:}

 Department of Mathematics, University of Washington, Seattle, WA 98195, USA.

  Email: \texttt{zchen@math.washington.edu}

\bigskip

\noindent{\bf Martin Hairer:}

 Mathematics Institute, The University of Warwick, Coventry CV4 7AL, UK.

  Email: \texttt{M.Hairer@Warwick.ac.uk}

\end{document}